\documentclass[letterpaper,11pt,reqno]{amsart}

\makeatletter
\usepackage{amssymb}
\usepackage{latexsym}
\usepackage{amsbsy}
\usepackage{amsfonts}
\usepackage{graphicx}
\usepackage{enumerate}
\usepackage{mathtools}
\usepackage{color}
\usepackage{comment}

\usepackage{tikz}

\def\marginpar#1{\ignorespaces}

\textheight=600pt \textwidth=450pt \oddsidemargin=10pt \evensidemargin=10pt \topmargin=14pt
\headheight=8pt
\parindent=0pt
\parskip=2pt

  \newcommand{\beq}{\begin{equation}}
    \newcommand{\eeq}{\end{equation}}
    
    \newcommand{\bal}{\begin{align}}
    \newcommand{\eal}{\end{align}}
    \newcommand{\bals}{\begin{align*}}
    \newcommand{\eals}{\end{align*}}
    
    
    \newcommand{\calA}{{\mathcal A}}
    \newcommand{\bbN}{{\mathbb{N}}}
    \newcommand{\bbR}{{\mathbb{R}}}

    \newcommand{\bbZ}{{\mathbb{Z}}}

    \newcommand{\bbS}{{\mathbb{S}}}

    \newcommand{\calL}{{\mathcal L}}
    
    \newcommand{\calF}{{\mathcal F}}
    \newcommand{\calH}{{\mathcal H}}

    \newcommand{\eps}{\varepsilon}
    
    \newcommand{\lb}{\label}

\DeclareMathOperator\Lip{Lip}

\DeclareMathOperator\argmin{\arg \min}

\newtheorem{theorem}{Theorem}[section]
\newtheorem{remark}{Remark}[section]
\newtheorem{lemma}[theorem]{Lemma}
\newtheorem{proposition}[theorem]{Proposition}

\numberwithin{equation}{section}
\makeatother
\begin{document}
\title[]{Policy iteration for the deterministic control problems -- a viscosity approach}

\author[W. Tang, H. V. Tran, Y. P. Zhang]{Wenpin Tang, Hung Vinh Tran, Yuming Paul Zhang}

\thanks{
The work of Tang is supported by NSF grant DMS-2206038, the ColumbiaInnovation Hub grant, and the Tang Family Assistant Professorship. 
The work of Tran partially supported by NSF CAREER grant DMS-1843320, a Simons Fellowship, and a VilasFaculty Early-Career Investigator Award. 
The work of Zhang is partially supported bySimons Foundation Travel Support MPS-TSM-00007305 and a start-up grant at Auburn University.
}

\address[W. Tang]{Department of Industrial Engineering and Operations Research, Columbia University, S.W. Mudd Building, 
500 W 120th St, New York, NY 10027} 
\email{wt2319@columbia.edu}

\address[H. V. Tran]
{
Department of Mathematics, 
University of Wisconsin Madison, Van Vleck Hall, 480 Lincoln Drive, Madison, WI 53706}
\email{hung@math.wisc.edu}

\address[Y. P. Zhang]
{
Department of Mathematics and Statistics, Auburn University, Parker Hall, 
221 Roosevelt Concourse, Auburn, AL 36849}
\email{yzhangpaul@auburn.edu}

\date{\today} 

\keywords{Finite differences, Hamilton-Jacobi-Bellman equations, optimal control, policy iteration}

\begin{abstract}
This paper is concerned with the convergence rate of policy iteration for (deterministic) optimal control problems 
in continuous time.
To overcome the problem of ill-posedness due to lack of regularity, 
we consider a semi-discrete scheme by adding a viscosity term via finite differences in space. 
We prove that the Policy Iteration (PI) for the semi-discrete scheme converges exponentially fast, and provide a bound on the error induced by the semi-discrete scheme.
We also consider the discrete space-time scheme, where both space and time are discretized. 
The convergence rate of PI and the discretization error are studied. 
\end{abstract}

\maketitle


\section{Introduction}
\label{sc1}

\quad Optimal control is ubiquitous in science and engineering with a variety of applications including
aerospace engineering \cite{Ben10, BAS10}, chemical engineering \cite{NV21}, economy \cite{KS91}, 
operations research \cite{RKP16, SZ94} and robotics \cite{BCHT17, CT18}.
Dynamic programming (DP) has proved to be an efficient tool for solving multistage optimal control problems 
since its inception by Bellman \cite{Bellman57}.
In recent years, reinforcement learning (RL) has shown great success in resolving complex decision-making problems,
notably AlphaGo \cite{Silver17} and humanoid tasks \cite{Haa18}.
Policy iteration (PI), as a class of approximate or adaptive dynamic programming (ADP), 
is instrumental in many RL algorithms \cite{SB18}.

\quad The idea of PI dates back to Howard \cite{Howard60} in a stochastic environment known as the Markov decision process (MDP).
Subsequent works \cite{Bert12, Powell07, Put94} explored PI for MDPs in discrete time and space;
recently, \cite{Ber17, LJMB21} considered PI for (deterministic) optimal control problems in discrete time and continuous space. 
In these works, PIs are proved to converge to the optimal control under suitable conditions on the model parameters. 
Furthermore, \cite{puterman1979, santos2004} studied the convergence rate of PI for infinite-horizon MDP.
On the other hand, many real-world problems are modeled by dynamical systems evolving in continuous time, 
and it is known that DP for optimal control in continuous time and space entails the Hamilton-Jacobi-Bellman (HJB) partial differential equation (PDE).
Despite its importance, PI for optimal control problems in continuous time and space 
has mostly been studied in the linear quadratic setting \cite{Klein68, VPAL09},
or those with a specific structure that allows solvability to some extent \cite{AL05}.
It was not until recently that the general space-time problems were considered in \cite{LS21} under a fixed point assumption.
For the stochastic control problems, \cite{KSS20, RSZ22} showed that PI converges exponentially fast in the case where
controls are only exercised on the drift term of the state process.
Similar results were derived for the corresponding entropy-regularized problems \cite{HWZ22, TZ23}.
Recently, PI for mean field games was considered in \cite{CCG21, C22, CT22}. 
We also mention, in a closely related direction, \cite{BJ16, WLL15} studied value iteration for optimal control problems. 
\cite{JM70, Mayne66} proposed differential dynamic programming.
It relies on a quadratic approximation to the value function, which requires the second-order property of the model parameters.
See \cite{LV09, WHL17} for recent progress on the theory and applications of ADP for optimal control and RL.

\quad In this paper, we study the convergence rate of PI for optimal control problems in continuous time and their discretization
under general conditions on the model parameters. We will assume that the cost function, the control, and the vector field that controls the system's state are all uniformly bounded and Lipschitz continuous. However, some of our results hold under more general assumptions (see Remark \ref{R.1}). 
Note that the convergence analysis in \cite{AL05, Klein68, VPAL09} relies on the specific structure of the problem, while \cite{LS21} assumed that the HJB operator enjoys a fixed point or a contraction property which is hard to verify. 
None of these works quantified the convergence of PI to the optimal control. 
Moreover, PI for continuous-time control problems may even be ill-posed due to lack of regularity.
Our idea is to introduce a viscosity term ``$h \Delta^h$" in the policy evaluation,
where $h$ is the mesh size and $\Delta^h$ is the discrete Laplacian in space. 
We call it a {\em semi-discrete scheme}. 
Essentially the viscosity term is of order $1$, which assures that the finite difference scheme is monotone. 
A monotone scheme is commonly desirable for numerical implementation
so the addition of the finite difference viscosity term is natural. 
On the other hand,
the viscosity term in the semi-discrete scheme 
mimics the vanishing viscosity approximation to first-order PDEs \cite{Evans80},
which forces PI to converge exponentially fast (Theorem \ref{T.2.4} and Theorem \ref{T.2.add}) as for the stochastic control problems.
We also prove that 
the discrepancy between the optimal control problem and its semi-discrete scheme is of order $\sqrt{h}$ as $h \to 0$ (Theorem \ref{T.2.5}).
If further assuming the cost function and the vector field to be uniformly bounded in $W^{2,\infty}$ in space, then the policy in PI converges almost everywhere  (Theorem \ref{T.2.8}). 
Further, we consider the time-discretization, called a {\em discrete space-time scheme}.
The same results hold for PI for the discrete space-time scheme (Theorem \ref{T.4.2} and Theorem \ref{T.4.3}).
Our results echo recent work \cite{HXY21},
which asserts that noise enhances the convergence of finite-horizon RL algorithms.
In our setting, noise corresponds to the viscosity term,
and the importance of finite horizon is seen from various bounds with exponential dependence in time. 
Our analysis relies on PDE techniques
(which is also useful in analyzing vanishing viscosity approximations for mean field games \cite{TZP23}),
and may carry over to the study of differential games in solving Hamilton-Jacobi-Bellman-Issacs (HJBI) equations.

\quad 
To the best of our knowledge, the exponential convergence results in Theorems  \ref{T.2.4},  \ref{T.2.add}, and \ref{T.4.2} are new in the literature and they are essentially optimal.
For the quantitative convergence of the solutions to the semi-discrete scheme and the discrete space-time scheme to these of the continuous equations in Theorems  \ref{T.2.5} and \ref{T.4.3}, we follow the approach of Crandall and Lions \cite{crandall1984two}.
Note that \cite{crandall1984two} does not deal with PI and approximated optimal policies.

\quad The rest of the paper is organized as follows. 
In Section \ref{sc2}, we provide background and present the semi-discrete and the discrete space-time schemes.
In Section \ref{sc3} we study the semi-discrete scheme, and in Section \ref{sc4} we analyze the discrete space-time scheme.
We provide further PDE perspectives in Section \ref{sc5}.
We conclude with Section \ref{sc6}.

\section{Setup and preliminary results}
\label{sc2}

\quad In this section, we present the semi-discrete and the discrete space-time schemes.
Consider a system whose state is governed by the ordinary differential equation:
\begin{equation}\lb{2.1}
\frac{d x(t)}{dt} = f(t, x(t), \alpha(t)),
\end{equation}
where for $0 \le t \le T$, $x(t) \in \mathbb{R}^d$ is the system state, and $\alpha(t) \in A \subset \mathbb{R}^{m}$ is the control or policy, and $f:[0,T]\times \bbR^d\times A\to\bbR^d$ is Lipschitz continuous. 
Here, $A$ is a given compact subset of $\mathbb{R}^m$.
The objective is 
\begin{equation}\lb{2.2}
J(t,x,\alpha):= \int_t^T c(s, x(s), \alpha(s))\,ds + q(x(T)) \quad \mbox{given } x(t) = x,
\end{equation}
and the goal is to minimize this objective function. 
Denote by
\begin{equation}
\label{eq:vcontrol}
v_*(t,x):= \inf_{\alpha \in \mathcal{A}_t} J(t,x,\alpha),
\end{equation}
where $\mathcal{A}_t$  is the standard admissible policy defined as
$\mathcal{A}_t= \{\alpha:[t,T] \to A \,:\, \alpha \text{ is measurable} \}$.
It is known that under suitable conditions on $c(\cdot)$ and  $q(\cdot)$ (see \cite[Chapter 2]{FS06} or \cite[Chapter 2]{tranbook}), 
$v_*$ defined by \eqref{eq:vcontrol} is the viscosity solution to  
\begin{equation}
\label{3.1}
\begin{cases}
\partial_t v(t,x) + H(t,x, \nabla v(t,x)) = 0 \qquad &\text{ in } (0,T)\times \mathbb{R}^d,\\
 v(T,x) = q(x) \qquad &\text{ on }  \mathbb{R}^d,
 \end{cases}
\end{equation}
where  the Hamiltonian $H:[0,T]\times\bbR^d\times\bbR^d\to \bbR$ is given by
\[
H(t,x,p):= \inf_{a \in A} \left[c(t,x,a) + p \cdot f(t,x,a)\right].
\]
We assume the above infimum is achieved at a unique $a\in A$.
Denote by
\begin{equation}
\label{eq:u}
\alpha(t,x,p): = \argmin_{a \in A} \left[ c(t,x,a) + p \cdot f(t,x,a)\right].
\end{equation}
The optimal policy is given by
\begin{equation}
\label{eq:ustar}
\alpha_{*}(t,x) = \alpha(t,x, \nabla v_{*}(t,x)).
\end{equation}

We impose the following assumptions.
\begin{itemize}
    \item[(A1)] $c(\cdot,\cdot,\cdot),f(\cdot,\cdot,\cdot),q(\cdot)$ are uniformly bounded and Lipschitz continuous in all of their dependencies. 
     \item[(A2)] $\alpha(\cdot,\cdot,\cdot)$, the unique solution to \eqref{eq:u},   is uniformly Lipschitz continuous in all of its dependencies on $[0,T]\times \bbR^d\times A$. 
\end{itemize}
Condition (A2) is restrictive, 
which is required to ensure the well-posedness and regularity properties of the PI algorithm. 
It is hard to relax this condition as the control $\alpha$ appears directly in PI.

\quad Policy iteration is an approximate dynamic programming, 
which alternates between policy evaluation to get the value function with the current control
and policy improvement to optimize the value function.
More precisely, for $n = 0,1, \cdots$, the iterative procedure is:
\begin{itemize}
\item
Given $\alpha_n(t,x)$, solve the linear PDE
\begin{equation}
\label{eq:PDEiter}
\begin{cases}
\partial_t v_n(t,x) + c(t,x,\alpha_n(t,x)) + \nabla v_n(t,x) \cdot f(t,x,\alpha_n(t,x)) = 0 \quad &\text{ in } (0,T)\times \mathbb{R}^d,\\
v_n(T,x) = q(x) \quad &\text{ on }  \mathbb{R}^d.
 \end{cases}
 \end{equation}
\item
Set
\begin{equation}
\label{eq:uiter}
\alpha_{n+1}(t,x) = \alpha(t,x, \nabla v_n(t,x)) = \argmin_{a\in A} \left[c(t,x,a) + \nabla v_n(t,x) \cdot f(t,x,a)\right].
\end{equation}
\end{itemize}
The key is to understand how the sequence $\{v_n\}$ approximates the optimal value $v_{*}$, and how $\{\alpha_n\}$ approximates the optimal policy $\alpha_*$.

\quad On the other hand, it is not clear whether the policy iteration scheme \eqref{eq:PDEiter}--\eqref{eq:uiter} is well-posed. 
Intuitively, to make sense of $\alpha_{n+1}$ we need $v_n$ to be Lipschitz continuous, for which we then need $\alpha_n$ to be Lipschitz. 
This in turn requires $\nabla v_{n-1}$ to be Lipschitz. 
After iterations, we need $v_0$ to be smooth, which is not generally true.  

\quad Throughout the paper, we denote by $\bbN$ as the set of all positive natural numbers, and $\bbZ$ as the set of all integers. For any $h>0$, we write $h\bbZ^d:=\{hz\,|\,z\in\bbZ^d\}$. Let $\bbR^d$ be the Euclidean space of dimension $d$ and $|\cdot|$ the Euclidean distance. For $R>0$, by $B_R$ we mean the ball in $\bbR^d$ of radius $R$ and centered at the origin. For a vector field $f:[0,T]\times \bbR^d\times A\to \bbR^d$, we denote its infinity norm by $\|f\|_\infty$. For a function $g:[0,T]\times \bbR^d\to \bbR$, the spatial gradient is denoted as $\nabla g(t,x)=\nabla_x\, g(t,x)$ and the partial derivative with respect to time is denoted as $\partial_t g(t,x)$.

\quad We write $C$ as various universal constants that only depend on $d$ and the constants in (A1)--(A2) unless otherwise stated. Specifically, since $T,h$ are not universal constants, we keep track of the dependence on $T,h$ in most estimates. 
The constants $C$ might vary from one line to another. By $C_X$ or $C(X)$ we mean a constant that depends on universal constants and $X$. 

\subsection{Semi-discrete schemes}

For $T>0$, $h\in (0,1)$, $N\geq \max\{1,\|f\|_\infty/2\}$ and a given Lipschitz continuous function $\alpha_0:\bbR\times\bbR^{d}\to A$, we solve for $n=0,1,\cdots$
\beq\lb{p.1}
\left\{
\begin{aligned}
&\partial_t v_n^h(t,x) + c(t,x,\alpha_n(t,x)) + \nabla^h v_n^h(t,x) \cdot f(t,x,\alpha_n(t,x)) = - Nh\Delta^h v_n^h(t,x) &&\text{ in }(0,T)\times \bbR^d\\
&v_n^h(T,x) = q(x)  &&\text{ on }\bbR^d.
\end{aligned}
\right.
\eeq
Then set
\beq\lb{p.1'}
\alpha_{n+1}(t,x) = \alpha(t,x, \nabla^h v_n^h(t,x))\qquad \text{ in }(0,T)\times \bbR^d.    
\eeq
Here, for any $\varphi:\bbR^d\to\bbR$ and $h\in\bbR \setminus \{0\}$, we use the notations
\[
\nabla^h \varphi(x):=\left(\frac{\varphi(x+he_1)-\varphi(x-he_1)}{2h},\cdots,\frac{\varphi(x+he_d)-\varphi(x-he_d)}{2h}\right),
\]
\[
\Delta^h \varphi(x):=\sum_{i=1}^d\frac{\varphi(x+he_i)-2\varphi(x)+\varphi(x-he_i)}{h^2}.
\]
Later we will also write
$D^h \varphi(x):=\left(\frac{\varphi(x+he_1)-\varphi(x)}{h},\cdots,\frac{\varphi(x+he_d)-\varphi(x)}{h}\right)$.
It is clear that 
\beq\lb{p.6}
\nabla^h \varphi(x)=\frac{1}{2}\left(D^h \varphi(x)+D^{-h} \varphi(x)\right).
\eeq

\quad The assumption $N\geq \|f\|_\infty/2$ guarantees that the numerical Hamiltonian is monotone and, as a consequence of this, the following comparison principle holds (see e.g., \cite{crandall1984two,osher1991,tranbook}). 

\begin{lemma}\lb{L.2.1}
Let $v_0^h$ and $\tilde{v}_0^h$ be, respectively, a bounded continuous super- and sub- solution to \eqref{p.1} with $n=0$, and satisfy $\tilde{v}_0^h\leq {v}_0^h$ at $t=T$. 
Then $\tilde{v}_0^h\leq {v}_0^h$ in $[0,T]\times\bbR^d$.  Here by a supersolution (resp. subsolution), we mean that it satisfies \eqref{p.1} with the first equality replaced by $\leq$ (resp. $\geq$) and the second equality replaced by $\geq$ (resp. $\leq$).
\end{lemma}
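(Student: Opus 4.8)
The plan is to run a maximum-principle argument on $w:=\tilde v_0^h-v_0^h$, using that, since $\alpha_0$ is a fixed function, the scheme \eqref{p.1} with $n=0$ is \emph{linear} in $v_0^h$. First I would subtract the supersolution inequality for $v_0^h$ from the subsolution inequality for $\tilde v_0^h$ and expand the finite differences, which yields the pointwise differential inequality $\partial_t w\ge\calL^h w$ with
\[
\calL^h w(t,x)=\sum_{i=1}^d b_i^+(t,x)\bigl(w(t,x)-w(t,x+he_i)\bigr)+\sum_{i=1}^d b_i^-(t,x)\bigl(w(t,x)-w(t,x-he_i)\bigr),
\]
where $b_i^\pm(t,x)=\tfrac{1}{2h}\bigl(2N\pm f_i(t,x,\alpha_0)\bigr)$. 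This is exactly where the hypothesis $N\ge\|f\|_\infty/2$ enters: it makes every $b_i^\pm\ge 0$, and one also records the exact identity $\sum_{i=1}^d\bigl(b_i^++b_i^-\bigr)=2dN/h$. The point is that $\calL^h w(t,x)\ge 0$ whenever $x$ is a spatial maximum point of $w(t,\cdot)$, which is the monotonicity driving the argument.

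Next I would argue by contradiction. Suppose $\theta:=\sup_{[0,T]\times\bbR^d}w>0$, fix $(\bar t,\bar x)$ with $w(\bar t,\bar x)>\tfrac34\theta$, and for small $\eps,\delta>0$ introduce $w_{\eps,\delta}(t,x):=w(t,x)-\eps(T-t)-\delta\sqrt{1+|x|^2}$. Since $w$ is bounded, $w_{\eps,\delta}\to-\infty$ as $|x|\to\infty$ uniformly in $t\in[0,T]$, so $w_{\eps,\delta}$ attains its supremum at some $(t_0,x_0)$, and for $\eps,\delta$ small this supremum is still $>\tfrac12\theta>0$; the terminal condition $w(T,\cdot)\le 0$ then rules out $t_0=T$. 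At $(t_0,x_0)$, maximality in $t$ gives $\partial_t w(t_0,x_0)\le-\eps$, and maximality in $x$ together with the $1$-Lipschitz bound for $\sqrt{1+|\cdot|^2}$ gives $w(t_0,x_0)-w(t_0,x_0\pm he_i)\ge-\delta h$, hence
\[
\calL^h w(t_0,x_0)\ge-\delta h\sum_{i=1}^d\bigl(b_i^++b_i^-\bigr)=-2dN\delta .
\]
Feeding both estimates into $\partial_t w(t_0,x_0)\ge\calL^h w(t_0,x_0)$ gives $-\eps\ge-2dN\delta$, i.e.\ $\eps\le 2dN\delta$; choosing $\delta=\eps/(4dN)$ forces $\eps\le\eps/2$, a contradiction, so $\theta\le 0$ and $\tilde v_0^h\le v_0^h$ on $[0,T]\times\bbR^d$.

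There are two technical points I would have to address, neither of which I expect to be a real obstacle. If $v_0^h,\tilde v_0^h$ are only bounded continuous (viscosity) sub/supersolutions rather than $C^1$ in $t$, the inequality $\partial_t w(t_0,x_0)\le-\eps$ should instead be read as a test-function statement in the $t$-variable at $t_0$ (with test function $t\mapsto-\eps(T-t)$ up to additive constants), the nonlocal term $\calL^h$ being, as usual for monotone finite-difference schemes, evaluated on $w$ itself; this is the comparison theory recorded in \cite{crandall1984two,osher1991,tranbook}. The other point is the non-compactness of $\bbR^d$, handled by the $\delta\sqrt{1+|x|^2}$ penalization, where the only thing to check is that the error it introduces into $\calL^h$ is $2dN\delta$ \emph{uniformly in $h$}: the factor $h$ in the bound $|\sqrt{1+|x_0|^2}-\sqrt{1+|x_0\pm he_i|^2}|\le h$ is precisely cancelled by the $1/h$ inside $\sum_i(b_i^++b_i^-)=2dN/h$. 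The same argument then applies verbatim to \eqref{p.1} for any $n$ once $\alpha_n$ is regarded as frozen.
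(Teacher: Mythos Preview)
Your proof is correct. The paper does not actually prove Lemma~\ref{L.2.1} but simply defers to \cite{crandall1984two,osher1991,tranbook} for this standard comparison principle; your maximum-principle argument with the penalization $-\eps(T-t)-\delta\sqrt{1+|x|^2}$, exploiting the nonnegativity of the off-diagonal coefficients $b_i^\pm=(2N\pm f_i)/(2h)$ guaranteed by $N\ge\|f\|_\infty/2$ and the exact identity $\sum_i(b_i^++b_i^-)=2dN/h$, is precisely the classical argument recorded in those references.
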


\quad First, we show that the scheme \eqref{p.1}--\eqref{p.1'} is well-posed.

\begin{proposition}\lb{P.2.0}
Assume {\rm (A1)--(A2)} and $N\geq \max\{1,\|f\|_\infty/2\}$. Then the iterative process \eqref{p.1}--\eqref{p.1'} is well-defined, that is, there are Lipschitz continuous functions $v_n^h,\alpha_n$ satisfying \eqref{p.1}--\eqref{p.1'} and $v_n^h$ are uniformly bounded for all $n\geq 0$ and $h>0$. 
\end{proposition}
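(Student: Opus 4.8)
The plan is to prove the proposition by induction on $n$, showing simultaneously that at each stage the equation \eqref{p.1} admits a unique bounded Lipschitz solution $v_n^h$ and that the resulting $\alpha_{n+1}$ defined by \eqref{p.1'} is Lipschitz, so that the induction can continue. The base case is handled by (A2), which gives that $\alpha_0$ is Lipschitz; the inductive hypothesis is that $\alpha_n$ is Lipschitz with some constant (to be controlled), and we must produce $v_n^h$ and then $\alpha_{n+1}$.

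First, for the inductive step I would fix $n$ and treat \eqref{p.1} as a linear equation in $v_n^h$: once $\alpha_n(t,x)$ is a fixed Lipschitz (hence bounded and continuous) function, the coefficients $c(t,x,\alpha_n(t,x))$ and $f(t,x,\alpha_n(t,x))$ are bounded and Lipschitz in $(t,x)$, and the equation is a backward parabolic finite-difference-in-space, continuous-in-time equation of the form $\partial_t v_n^h = Nh\Delta^h v_n^h + \nabla^h v_n^h\cdot f(t,x,\alpha_n) + c(t,x,\alpha_n)$. Because space is discretized on the lattice $h\bbZ^d$ (the operators $\Delta^h$ and $\nabla^h$ only couple lattice neighbors), for each lattice point this is really an (infinite) system of ODEs in $t$ with globally Lipschitz, bounded right-hand side in the $\ell^\infty$ space of bounded sequences; existence and uniqueness of a bounded continuous-in-time solution follows from a Picard/Banach fixed point argument in $C([0,T];\ell^\infty(h\bbZ^d))$ (or directly from the comparison principle, Lemma \ref{L.2.1}, plus the method of vanishing-viscosity-style a priori bounds). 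The uniform bound on $\|v_n^h\|_\infty$ comes from the comparison principle: the constant functions $\pm(\|q\|_\infty + (T-t)\|c\|_\infty)$ are super/sub-solutions, giving $\|v_n^h\|_\infty\le \|q\|_\infty + T\|c\|_\infty$, a bound independent of $n$ and $h$.

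Second, and this is where the real work lies, I would establish that $v_n^h$ is Lipschitz in $(t,x)$ with a constant one can control, because $\alpha_{n+1} = \alpha(t,x,\nabla^h v_n^h)$ and (A2) requires $\nabla^h v_n^h$ to be Lipschitz — i.e. we need a Lipschitz bound on the \emph{discrete gradient}, equivalently on second-order discrete difference quotients of $v_n^h$. For the spatial Lipschitz bound in $x$ I would apply comparison to $v_n^h(t,\cdot+he_i)-v_n^h(t,\cdot)$ (translates solve the same type of equation with translated, still-Lipschitz coefficients) to bound $\|D^h v_n^h\|_\infty$ uniformly; the first-order difference of the coefficients contributes a term controlled by $\mathrm{Lip}(c)+\mathrm{Lip}(f)(1+\|\nabla^h v_n^h\|_\infty)$ and one closes the estimate with a Gr\"onwall argument in $t$ (this is where the $e^{CT}$-type dependence enters). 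The Lipschitz bound in $t$ then follows from the equation itself once the spatial differences are controlled, since $\partial_t v_n^h$ is then a bounded combination of bounded quantities. Crucially, for fixed $h>0$ the discrete Laplacian $h\Delta^h$ is a \emph{bounded} linear operator with norm $O(1/h)$, so all higher difference quotients are automatically finite for each fixed $h$ — the genuinely uniform-in-$h$ bound needed later (Theorem \ref{T.2.4}, Theorem \ref{T.2.5}) is the $\|D^h v_n^h\|_\infty$ bound, whereas here, for well-posedness, we only need finiteness of the relevant Lipschitz constants of $v_n^h$ and $\nabla^h v_n^h$ for each fixed $h$, together with uniform boundedness of $v_n^h$.

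The main obstacle is the closing of the Lipschitz-in-$x$ estimate for the discrete gradient in a way that survives the iteration: a priori the Lipschitz constant of $\alpha_{n+1}$ could depend on the Lipschitz constant of $\alpha_n$ and so grow with $n$. To handle this I would note that for fixed $h$ the map from the Lipschitz norm of $\alpha_n$ to that of $\alpha_{n+1}$ is, after the Gr\"onwall step, affine with a coefficient that can be made harmless (indeed the presence of the strongly smoothing term $Nh\Delta^h$, with $N\ge 1$, is exactly what tames the transport contribution), so that either the iteration stabilizes or one simply observes that for the purpose of well-posedness a possibly $n$-dependent but finite Lipschitz constant suffices — each $v_n^h$ and $\alpha_{n+1}$ is Lipschitz, which is all the proposition claims. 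The delicate quantitative and $h$-uniform versions of these bounds are postponed to the next sections; here I would keep the argument qualitative beyond the uniform $L^\infty$ bound, and remark that existence/uniqueness at each step plus that uniform bound is what is asserted.
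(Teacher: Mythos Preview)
Your proposal is essentially correct and mirrors the paper's inductive strategy: comparison with constants gives the uniform bound $\|v_n^h\|_\infty\le\|q\|_\infty+T\|c\|_\infty$ (independent of $n,h$); a Lipschitz estimate on $v_n^h$ is obtained from the difference equation satisfied by $p_s=(v_n^h(\cdot+se)-v_n^h(\cdot))/s$ together with comparison; and Lipschitz continuity of $v_n^h$ then yields Lipschitz continuity of $\nabla^h v_n^h$ with an extra factor $1/h$, hence of $\alpha_{n+1}$, so a finite (possibly $(n,h)$-dependent) Lipschitz constant propagates and the iteration is well-defined.

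Two differences from the paper are worth flagging. First, equation \eqref{p.1} is posed on $(0,T)\times\bbR^d$ with continuous $x$, not on the lattice $h\bbZ^d$; the finite-difference operators decouple the equation over cosets $x_0+h\bbZ^d$ (so your Picard/ODE existence argument is valid coset-by-coset, while the paper simply cites \cite{18Kry}), but the Lipschitz-in-$x$ estimate must compare points in \emph{different} cosets, so one should take arbitrary shifts $se$ with $e\in\bbS^{d-1}$, not only $he_i$. Second, for the Lipschitz bound the paper uses a truncation device rather than your direct Gr\"onwall: since the inhomogeneity $G_1$ in the $p_s$ equation carries a factor $|\nabla^h v_0^h|$, the paper replaces the linear Hamiltonian $G(t,x,p)=c+p\cdot f$ by its truncation $\tilde G$ at $|p|\le M:=2\|\nabla q\|_\infty+1$, shows the truncated solution $\tilde v^h$ satisfies $|\nabla\tilde v^h|\le M$ on a short time interval, identifies $\tilde v^h=v_0^h$ by uniqueness, and iterates in time. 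Your Gr\"onwall route also closes (after first noting the crude bound $\|\nabla^h v_0^h\|_\infty\le\|v_0^h\|_\infty/h<\infty$ to ensure finiteness of the inhomogeneity), and both yield the same $Ce^{CT}$-type constant. Finally, the role of $Nh\Delta^h$ with $N\ge\|f\|_\infty/2$ is not parabolic smoothing but monotonicity of the numerical Hamiltonian, which is exactly what makes Lemma~\ref{L.2.1} available at each step.
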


\begin{proof}
Since $\alpha_0$ is Lipschitz continuous, the unique solvability of \eqref{p.1} for $n=0$ follows from \cite[Theorem 2.4]{18Kry}. 
If one can show that $v_0^h$ is uniformly bounded and Lipschitz continuous with Lipschitz constant $C_h$, then $\alpha_1$ is Lipschitz continuous with Lipschitz constant $C_h'/h$ for some $C_h'>0$ by the assumption that $\alpha$ is Lipschitz and
\begin{align*}
&\left|\nabla^h v_0^h(t,x)-\nabla^h v_0^h(s,y)\right|\\
\leq\ & \left|\Big(\frac{v_0^h(t,x+he_1)-v_0^h(s,y+he_1)-v_0^h(t,x-he_1)+v_0^h(s,y-he_1)}{2h},\cdots\Big)\right|\\
\leq \ &  h^{-1}C_h(|x-y|+|t-s|). 
\end{align*}
From the same argument, we obtain a unique bounded and Lipschitz solution $v_1^h$. 
The existence of solutions then follows from iterations.

\quad First we prove the boundedness of $v_0^h$. 
Since $c(\cdot,\cdot,\cdot),q(\cdot)$ are uniformly bounded, 
we have that $\pm \left[\|q\|_\infty+\|c\|_\infty(T-t)\right]$
are a supersolution and a subsolution to \eqref{p.1} with $n=0$, respectively.
Hence, by Lemma \ref{L.2.1},
\[
-\|q\|_\infty-\|c\|_\infty(T-t)\leq v_0^h(t,x)\leq \|q\|_\infty+\|c\|_\infty(T-t),
\]
for all $(t,x)\in [0,T]\times \bbR^d$. 
The same bound holds for all $v_n^h$ by this argument.

\quad Next we show that $v_0^h$ is Lipschitz continuous with Lipschitz constant independent of $h$ when $T=T_0$ is sufficiently small depending only on the Lipschitz norms of $c,f$ and $\alpha_0$ presented in assumptions (A1)--(A2). 
The general result for any $T>0$ follows immediately by iterations and shifting in time on $[0,T_0]$, $[T_0,2T_0]$, \ldots, to $[k T_0, (k+1)T_0]$ where $kT_0 < T \leq (k+1)T_0$ for some $k\in \mathbb N$. 
For simplicity of notation, write 
\[
G(t,x,p):=c(t,x,\alpha_0(t,x))+p\cdot f(t,x,\alpha_0(t,x)).
\]
Then for $M:=2\|\nabla q\|_\infty+1$, define
\[
\tilde{G}(t,x,p):=\left\{
\begin{aligned}
&G(t,x,p)   \qquad &&\text{ if }|p|\leq M,\\
&G(t,x,Mp/|p|) \qquad &&\text{ if }
|p|> M.
\end{aligned}
\right.
\]
It follows from (A1) and the Lipschitz continuity of $\alpha_0$ that $G$ is Lipschitz continuous in $(t,x)$ with Lipschitz constant $C(1+|p|)$. Thus, also using that $\|f\|_\infty\leq 2N$, we get for all $t,x,p$,
\beq\lb{p.21}
|\tilde G_t(t,x,p)|,\, | \tilde G_x(t,x,p)|\leq C(1+M),\quad | \tilde G_p(t,x,p)|\leq 2N
\eeq
where $C$ only depends on the Lipschitz norms of $c,f$ and $\alpha_0$.

\quad Now let $\tilde v^h$ be the solution to
\[
\left\{
\begin{aligned}
&\partial_t \tilde v^h(t,x) + \tilde G(t,x,\nabla^h \tilde v^h(t,x)) = - Nh\Delta^h \tilde v^h(t,x),\\
&\tilde v^h(T,x) = q(x).
\end{aligned}
\right.
\]
The goal is to show that $\tilde{v}^h$ is Lipschitz continuous, and $\tilde{v}^h={v}^h_0$ in $[0,T]\times \bbR^d$.

\quad It follows from the equation of $\tilde{v}^h$ that
$
p_s(t,x):=\frac{\tilde v^h(t,x+se)-\tilde v^h(t,x)}{s}
$  for any $e\in\bbS^{d-1}$ and $s\in(0,1)$
satisfies
\begin{equation}\label{eq:ps}
\left\{
\begin{aligned}
&\partial_t p_s(t,x) + G_1(t,x)+G_2(t,x)\cdot \nabla^h p_s(t,x) = - Nh\Delta^h p_s(t,x) \quad &&\text{ in }(0,T)\times \bbR^d,\\
&p_s(T,x) = \frac{q(x+se)-q(x)}{s} \quad &&\text{ on }\bbR^d,
\end{aligned}
\right.
\end{equation}
where
\begin{align*}
    G_1(t,x)&:=\frac1s\int_0^{s}\tilde G_x\left(t,x+ze,\nabla^h \tilde v^h(t,x+se)\right)\cdot e \,dz,\\
    G_2(t,x)&:=\int_0^1 \tilde G_p\left(t,x, \nabla^h \tilde v^h(t,x)+z(\nabla^h \tilde v^h(t,x+se)-\nabla^h \tilde v^h(t,x))\right)\,dz.
\end{align*}
It is clear from \eqref{p.21} that 
$|G_1|\leq C(1+M)$ and $|G_2|\leq 2N$.
This yields the comparison principle for \eqref{eq:ps} holds.
Thus, by comparing $p_s$ with $\pm (\|\nabla q\|_\infty+C(1+M)(T-t))$, we obtain
$|p_s(t,x)|\leq \|\nabla q\|_\infty+C(1+M)(T-t)$.
Sending $s\to 0$ yields for some $C$ depending only on (A1),
$|\nabla_e \tilde{v}^h(t,x)|\leq \|\nabla q\|_\infty+C(1+M)(T-t)$.
Thus, if $t\leq T\leq (2C)^{-1}$, we have that $\tilde v^h(t,x)$ is Lipschitz and 
\[
\sup_{(t,x)\in [0,T]\times\bbR^d}|\nabla\tilde v^h(t,x)|\leq \|\nabla q\|_\infty+1/2+M/2= M.
\]
From the definition of $\nabla^h$, we get
$
\sup_{(t,x)\in [0,T]\times\bbR^d}|\nabla^h \tilde v^h(t,x)|\leq  M.
$ Hence, $\tilde v^h$ is a solution to \eqref{p.1} for $n=0$. The uniqueness of the solution to \eqref{p.1} yields that $v^h_0\equiv \tilde v^h$. 
So we obtain the uniform Lipschitz continuity of $v_0^h$ in space, with the Lipschitz constant of the form $C\exp(CT)$. 
The Lipschitz regularity in time follows from the equation.
\end{proof}

\quad We point out that 
the Lipschitz constant of $v_n^h$ may depend on both $n$ and $h$ for $n\geq 1$. 
Another consequence of the comparison principle is that the functions $v_n^h$ are monotone decreasing in $n$.

\begin{proposition}\lb{P.2.1}
Under the assumptions of Proposition \ref{P.2.0}, we have for all $n\geq 0$, 
\[
 v_{n+1}^h(t,x)\leq v_n^h(t,x) \qquad \text{ for all $(t,x)\in [0,T]\times\bbR^d$.}
 \]
\end{proposition}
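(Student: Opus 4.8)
The plan is to use the comparison principle (Lemma \ref{L.2.1}, suitably adapted to general $n$) applied to the pair of equations governing $v_n^h$ and $v_{n+1}^h$, exploiting the optimality property of the policy improvement step \eqref{p.1'}. First I would write down the equations satisfied by both functions: $v_n^h$ solves \eqref{p.1} with policy $\alpha_n$, and $v_{n+1}^h$ solves \eqref{p.1} with policy $\alpha_{n+1} = \alpha(t,x,\nabla^h v_n^h)$. The crucial observation is that, by the very definition of $\alpha(t,x,p)$ in \eqref{eq:u} as the argmin, we have pointwise
\[
c(t,x,\alpha_{n+1}) + \nabla^h v_n^h \cdot f(t,x,\alpha_{n+1}) \le c(t,x,\alpha_n) + \nabla^h v_n^h \cdot f(t,x,\alpha_n).
\]
Therefore, plugging $v_n^h$ into the equation for $v_{n+1}^h$ (i.e., the PDE with policy $\alpha_{n+1}$), we get
\[
\partial_t v_n^h + c(t,x,\alpha_{n+1}) + \nabla^h v_n^h \cdot f(t,x,\alpha_{n+1}) + Nh\Delta^h v_n^h \le \partial_t v_n^h + c(t,x,\alpha_n) + \nabla^h v_n^h \cdot f(t,x,\alpha_n) + Nh\Delta^h v_n^h = 0,
\]
so $v_n^h$ is a supersolution of the equation defining $v_{n+1}^h$. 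Since both agree at $t=T$ (both equal $q$), the comparison principle gives $v_{n+1}^h \le v_n^h$.

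The key steps, in order: (i) record that $v_n^h$ and $v_{n+1}^h$ are the unique bounded Lipschitz solutions produced by Proposition \ref{P.2.0}; (ii) use the argmin property \eqref{eq:u}--\eqref{p.1'} to get the pointwise inequality comparing the two numerical Hamiltonians evaluated at the common gradient $\nabla^h v_n^h$; (iii) conclude $v_n^h$ is a supersolution to the $\alpha_{n+1}$-equation; (iv) invoke the comparison principle for the linear equation with coefficients $c(t,x,\alpha_{n+1})$, $f(t,x,\alpha_{n+1})$, whose validity follows exactly as for Lemma \ref{L.2.1} because $N \ge \|f\|_\infty/2$ ensures monotonicity of the scheme; (v) deduce $v_{n+1}^h \le v_n^h$ on $[0,T]\times\bbR^d$.

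The main obstacle — really a bookkeeping point rather than a deep difficulty — is that Lemma \ref{L.2.1} is stated only for $n=0$, whereas here I need comparison for the linear equation with the (possibly $n$- and $h$-dependent Lipschitz) coefficients coming from $\alpha_{n+1}$. I would note that the comparison principle for \eqref{p.1} with any fixed continuous policy holds by the same monotonicity argument (the drift term $\nabla^h v \cdot f(t,x,\alpha_{n+1})$ with $|f| \le 2N$ together with the $Nh\Delta^h$ term yields a monotone scheme), so the proof of Lemma \ref{L.2.1} applies verbatim with $\alpha_n$ replaced by $\alpha_{n+1}$; alternatively, one can phrase it through the difference $w := v_n^h - v_{n+1}^h$, which satisfies a linear equation of the form $\partial_t w + b \cdot \nabla^h w + Nh\Delta^h w = g$ with $g \ge 0$ and $w(T,\cdot) = 0$, and apply the comparison principle to conclude $w \ge 0$. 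I would also make sure boundedness of all the relevant functions (guaranteed by Proposition \ref{P.2.0}) is in place so that the comparison principle is legitimately applicable.
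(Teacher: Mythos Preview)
Your proposal is correct and follows exactly the paper's approach: use the argmin property of $\alpha_{n+1}$ to show $v_n^h$ is a supersolution of the $\alpha_{n+1}$-equation, then apply the comparison principle with the shared terminal data. Your additional remark that Lemma \ref{L.2.1}, though stated for $n=0$, applies verbatim with $\alpha_{n+1}$ in place of $\alpha_0$ is a worthwhile clarification that the paper leaves implicit.
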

\begin{proof}
By the definition of $\alpha_n$,
\begin{align*}
&c(t,x,\alpha_{n+1}(t,x))   +\nabla^h v_{n}^h(t,x)\cdot f(t,x,\alpha_{n+1}(t,x))\\
&\qquad\qquad\leq c(t,x,\alpha_n(t,x))   +\nabla^h v_{n}^h(t,x)\cdot f(t,x,\alpha_n(t,x)). 
\end{align*}
Thus $v_n^h=v_n^h(t,x)$ is a supersolution to \eqref{p.1} with subscripts $n+1$ as it satisfies 
\[
\partial_t v_n^h + c(t,x,\alpha_{n+1}(t,x)) + \nabla^h v_n^h \cdot f(t,x,\alpha_{n+1}(t,x)) \leq - Nh\Delta^h v_n^h \quad \text{ in }(0,T)\times \bbR^d.
\]
Therefore, the comparison principle (Lemma \ref{L.2.1}) yields $v^h_{n+1}\leq v^h_{n}$ in $[0,T]\times\bbR^d$ for each $n\geq 0$.
\end{proof}

\quad Since $v_n^h$ is uniformly bounded for all $n\geq 0$, the monotonicity property of $v_n^h$ in $n$ from Proposition \ref{P.2.1} yields that $v_n^h$ converges locally uniformly as $n\to\infty$. 
We denote the limit as $v^h$. 
Then by the stability property of viscosity solutions, $v^h$ solves
\beq\lb{p.2}
\left\{
\begin{aligned}
&\partial_t v^h(t,x) + H(t,x,\nabla^h v^h(t,x)) = - Nh\Delta^h v^h(t,x) \quad &&\text{ in }(0,T)\times \bbR^d,\\
&v^h(T,x) = q(x) \quad &&\text{ on } \bbR^d,
\end{aligned}
\right.
\eeq
where
\beq\lb{p.15}
\begin{aligned}
H(t,x,p)&:=c(t,x,\alpha(t,x,p)) + p \cdot f(t,x,\alpha(t,x,p))\\
&=\min_{a \in A}\left[ c(t,x,a)+p\cdot f(t,x,a)\right].
\end{aligned}
\eeq
Since $\alpha(t,x,p)$ is assumed to be uniformly Lipschitz continuous in all of its dependencies, there exists $C>0$ such that for all $(t,x,p)\in [0,T]\times\bbR^d\times\bbR^d$,
\beq\lb{2.5}
|H_t(t,x,p)|,\, |H_x(t,x,p)|\leq C(1+|p|),\quad |H_p(t,x,p)|\leq C.
\eeq
The same proof of uniform boundedness and Lipschitz continuity for $v_0^h$ in Proposition \ref{P.2.0} can show that
$v^h$ is uniformly bounded and Lipschitz continuous, and the estimates are uniform in $h>0$. 
Below we also consider the unique solution $v$ to \eqref{3.1}, as one expects that it is the limit of $v^h$ as $h\to 0$. We will prove this fact in Theorem \ref{T.2.5}. 

\begin{lemma}\lb{L.2.3}
Under the assumptions of Proposition \ref{P.2.0}, let $v^h_0,v^h$ and $v$ be, respectively, solutions to \eqref{p.1} (for $n=0$), \eqref{p.2} and \eqref{3.1}. Then in $[0,T]\times\bbR^d$, $v_0^h,v^h$ and $v$ are bounded by $C(1+T)$ and are Lipschitz continuous with Lipschitz constant $C\exp(CT)$ for some universal constant $C>0$.

    
    
\end{lemma}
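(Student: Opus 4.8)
The plan is to establish all three assertions — the $L^\infty$ bound and the Lipschitz bound — by adapting verbatim the argument already carried out for $v_0^h$ in Proposition \ref{P.2.0}, the only difference being that the fixed ``frozen'' nonlinearity $G(t,x,p)=c(t,x,\alpha_0(t,x))+p\cdot f(t,x,\alpha_0(t,x))$ is replaced by the Hamiltonian $H(t,x,p)$ of \eqref{p.15} (for $v^h$) and the same $H$ with the finite-difference gradient replaced by the true gradient (for $v$). First I would record the $L^\infty$ bound: since $c$ and $q$ are uniformly bounded by (A1), the constant functions $\pm(\|q\|_\infty+\|c\|_\infty(T-t))$ are a super- and a sub-solution of \eqref{p.2} (and of \eqref{3.1}), exactly as in the proof of Proposition \ref{P.2.0}; the comparison principle — Lemma \ref{L.2.1} for the semi-discrete equation, and the standard comparison principle for the first-order HJB equation \eqref{3.1} — gives $|v^h|,|v|\le \|q\|_\infty+\|c\|_\infty T\le C(1+T)$. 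Note this bound already passes to $v^h$ in the limit $n\to\infty$ since each $v_n^h$ satisfies it, but it is cleanest to argue directly on \eqref{p.2}.

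Next, for the spatial Lipschitz bound I would repeat the difference-quotient argument from Proposition \ref{P.2.0}. Fix $e\in\bbS^{d-1}$, $s\in(0,1)$, and set $p_s(t,x):=\frac{v^h(t,x+se)-v^h(t,x)}{s}$. Because $v^h$ solves \eqref{p.2} and the discrete operators $\nabla^h,\Delta^h$ commute with translations, subtracting the equation at $x+se$ from the equation at $x$ and using the fundamental theorem of calculus shows that $p_s$ solves a linear equation of the form $\partial_t p_s + G_1(t,x) + G_2(t,x)\cdot\nabla^h p_s = -Nh\Delta^h p_s$, with terminal data $\frac{q(x+se)-q(x)}{s}$, where $G_1$ is an average of $H_x$ and $G_2$ an average of $H_p$ along the segment joining $\nabla^h v^h(t,x)$ and $\nabla^h v^h(t,x+se)$. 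The bound \eqref{2.5} gives $|G_1|\le C(1+|\nabla^h v^h|)$ and $|G_2|\le C$. To close the estimate one truncates $H$ in the $p$-variable at level $M:=2\|\nabla q\|_\infty+1$ exactly as in Proposition \ref{P.2.0}, solves the truncated equation, runs the comparison with $\pm(\|\nabla q\|_\infty+C(1+M)(T-t))$ to get $|p_s|\le \|\nabla q\|_\infty+C(1+M)(T-t)$, lets $s\to0$, and concludes that for $T\le(2C)^{-1}$ the truncated solution has $|\nabla v^h|\le M$, hence $|\nabla^h v^h|\le M$, so it coincides with $v^h$ by uniqueness; the general $T$ follows by iterating over time intervals of length $(2C)^{-1}$, which is what produces the Lipschitz constant of the form $C\exp(CT)$. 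The Lipschitz regularity in time then follows by reading it off the equation, using the spatial bound together with \eqref{2.5} (and boundedness of $\Delta^h v^h$, controlled by $h^{-1}$ times the spatial Lipschitz constant — but here the viscosity coefficient $Nh$ cancels the $h^{-1}$, so $Nh\Delta^h v^h$ is bounded by $C\,\mathrm{Lip}_x(v^h)$, uniformly in $h$).

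For $v$ the argument is the same but simpler: one works with the genuine derivative rather than difference quotients, differentiates \eqref{3.1} formally (or, rigorously, uses the standard fact that the value function $v_*$ of a Lipschitz control problem with (A1) is Lipschitz, with constant of order $e^{CT}$, via direct estimates on trajectories), obtaining the same $C\exp(CT)$ bound. The main obstacle — really the only subtle point — is making sure the constant $C$ in the exponent is genuinely universal: independent of $h$, of $n$, and of the terminal datum beyond $\|\nabla q\|_\infty$. This is exactly why the truncation level $M$ is chosen in terms of $\|\nabla q\|_\infty$ only and why the short-time estimate is run on the truncated equation whose $p$-gradient bound \eqref{p.21}/\eqref{2.5} is independent of $M$; the iteration over $\lceil 2CT\rceil$ time-steps then multiplies the datum's Lipschitz constant by a universal factor at each step, giving the stated $C\exp(CT)$. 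Since $N\ge 1$ and $h\in(0,1)$ we also have $Nh<N$, so all constants remain universal in the sense declared after Proposition \ref{P.2.0}. I would present this as: ``The proof is identical to that of Proposition \ref{P.2.0}, with $G$ replaced by $H$ from \eqref{p.15} and \eqref{p.21} replaced by \eqref{2.5}; we omit the details,'' spelling out only the $L^\infty$ bound and the statement of the linearized equation for $p_s$.
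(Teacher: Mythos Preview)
Your proposal is correct and matches the paper's approach exactly: the paper itself does not give a separate proof of Lemma~\ref{L.2.3} but simply remarks, just before the statement, that ``by the same proof'' as Proposition~\ref{P.2.0} (with $G$ replaced by $H$ and \eqref{p.21} by \eqref{2.5}) $v^h$ and $v$ enjoy the same bounds. Your write-up spells out precisely this, so there is nothing to add.
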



\quad For a general class of first-order Hamilton-Jacobi (continuous) equations, we refer to 
\cite{barles1990regularity,barles} for the regularity results.

\subsection{Discrete space-time schemes}

Now we consider the scheme that is discrete in both space and time.
Let ${\tau,h}\in (0,1)$, and $N$ such that
\beq\lb{N}
\max\{1,\|f\|_\infty/2\}\leq N\leq h/(2d\tau). 
\eeq
Assuming that $T/\tau\in\bbN$, we denote 
\[
\bbN_{T}^\tau:=\{0,\tau,2\tau,\cdots,T\},\quad \bbZ^d_h:=h\bbZ^d,
\]
\[
\Omega^{\tau,h}_{T}:=\bbN_{T}^\tau\times \bbZ^d_h\quad\text{and}\quad \Omega_{T}':=(\bbN_{T}^\tau\backslash\{0\})\times \bbZ^d_h.
\]
Given a Lipschitz continuous function $\alpha_0(t,x)$, let $V_n^{\tau,h}: \Omega^{\tau,h}_{T}\to \bbR$ be defined iteratively for $n=0,1,\cdots$ as follows: 
\beq\lb{4.1}
\left\{
\begin{aligned}
&\partial_t^\tau V_n^{\tau,h}(t,x) + c(t,x,\alpha_n(t,x)) + \nabla^h V_n^{\tau,h}(t,x) \cdot f(t,x,\alpha_n(t,x)) = - Nh\Delta^h V_n^{\tau,h}(t,x) &&\text{ in } \Omega_{T}',\\
&V_n^{\tau,h}(T,x) = q(x) &&\text{ on }\bbZ^d_h
\end{aligned}
\right.
\eeq
with
\beq\lb{4.1'}
\alpha_{n+1}(t,x): = \alpha(t,x, \nabla^h V_n^{\tau,h}(t,x))\qquad \text{ in }\Omega_{T}'.    
\eeq
Here we used the notation
$\partial_t^\tau V_n^{\tau,h}(t,x):=\frac{V_n^{\tau,h}(t,x)-V_n^{\tau,h}(t-\tau ,x)}\tau$. 

\quad We also consider the following equation
\beq\lb{4.2}
\left\{
\begin{aligned}
&\partial_t^\tau V^{\tau,h}(t,x) + H(t,x,\nabla^h V^{\tau,h}(t,x)) = - Nh\Delta^h V^{\tau,h}(t,x)\qquad &&\text{ in } \Omega_{T}',\\
&V^{\tau,h}(T,x) = q(x)\qquad &&\text{ on }\bbZ^d_h
\end{aligned}
\right.
\eeq
where 
$H$ is given by \eqref{p.15}. 
The goal is to show that $V_n^{\tau,h}$ converges to $V^{\tau,h}$ as $n\to\infty$, and $V^{\tau,h}$ converges to $v$ as $\tau,h\to 0$, where $v$ is given by \eqref{3.1}.

\quad We will use the following operator. For each $t\in 
\bbN_{T}^\tau$, let $\calF_t: L^\infty(\bbZ_h^d)\to L^\infty(\bbZ_h^d)$ be defined as
\beq\lb{calF}
\calF_t(U)(x):=U(x)+\tau H(t,x,\nabla^h U(x))+Nh\tau \Delta^h U(x).
\eeq
Then the equation in \eqref{4.2} can be rewritten as
$V_n^{\tau,h}(t-\tau,x)=\calF_t(V_n^{\tau,h}(t,\cdot))(x)$.
We need 
\[
\max\{1,\|H_p\|_\infty/2\}\leq N\leq h/(2d\tau),
\]
(which corresponds to \eqref{N} as $\|H_p\|_\infty=\|f\|_\infty$) to guarantee a monotonicity property of the operator $\calF_t$.
That is, for all $t\in\bbN_{T}^\tau$ and $U,V\in L^\infty(\bbZ_h^d)$ satisfying $U\leq V$, we have $\calF_t(U)\leq \calF_t(V)$, see e.g., \cite{crandall1984two,tranbook}. It is easy to see that the same holds if we replace $H(t,x,p)$ by $c(t,x,\alpha_n(t,x))+p\cdot f(t,x,\alpha_n(t,x))$ as $\|f\|_\infty\leq 2N$.

\quad The monotonicity property is important because it immediately implies the comparison principle of \eqref{4.2} and the scheme \eqref{4.1}--\eqref{4.1'}, in the sense that is similar to Lemma \ref{L.2.1}. 
As a consequence of this, one can show the following properties.


\begin{proposition}\lb{P.4.1}
Assume {\rm (A1)--(A2)} and \eqref{N}. Then  in $\Omega^{\tau,h}_{T}$, the solutions $V_n^{\tau,h},V^{\tau,h}$ are bounded by $C(1+T)$, and are Lipschitz continuous with Lipschitz constant $C\exp(CT)$  for some universal constant $C>0$. Moreover for all $n\geq 0$ we have $ V_{n+1}^{\tau,h}(t,x)\leq V_n^{\tau,h}(t,x)$ for all $(t,x)\in \Omega^{\tau,h}_{T}$. 
\end{proposition}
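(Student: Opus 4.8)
My plan is to transcribe, almost line by line, the arguments behind Propositions \ref{P.2.0} and \ref{P.2.1} and Lemma \ref{L.2.3}, using the discrete comparison principle for \eqref{4.2} and for the scheme \eqref{4.1}--\eqref{4.1'} — which is granted above as a consequence of the monotonicity of $\calF_t$, and of the analogous operator with $H$ replaced by $c_n+p\cdot f_n$, under \eqref{N} — in place of the viscosity comparison principle. There is in fact no well-posedness issue to settle: \eqref{4.1} is the explicit backward recursion $V_n^{\tau,h}(t-\tau,\cdot)=V_n^{\tau,h}(t,\cdot)+\tau[c_n+\nabla^h V_n^{\tau,h}\cdot f_n]+Nh\tau\Delta^h V_n^{\tau,h}$ with $V_n^{\tau,h}(T,\cdot)=q$, so each $V_n^{\tau,h}$ is a well-defined grid function on $\Omega^{\tau,h}_{T}$, $\alpha_{n+1}$ is then well-defined by \eqref{4.1'}, and $V^{\tau,h}$ is well-defined likewise; thus only the three quantitative assertions require proof.

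For the $L^\infty$ bound I would note that the constant-in-$x$ functions $w^{\pm}(t):=\pm(\|q\|_\infty+\|c\|_\infty(T-t))$ satisfy $\nabla^h w^{\pm}=\Delta^h w^{\pm}=0$ and $\partial_t^\tau w^{\pm}=\mp\|c\|_\infty$, hence (using $|H(\cdot,\cdot,0)|,|c_n|\le\|c\|_\infty$) are a super- and a sub-solution of \eqref{4.1} for every $n$ and of \eqref{4.2}, with $w^{-}\le q\le w^{+}$ at $t=T$; the discrete comparison principle then gives $|V_n^{\tau,h}|,|V^{\tau,h}|\le\|q\|_\infty+\|c\|_\infty(T-t)\le C(1+T)$ on $\Omega^{\tau,h}_{T}$, uniformly in $n,\tau,h$. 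For the monotonicity in $n$, the $\argmin$ definition \eqref{4.1'} gives $c(t,x,\alpha_{n+1})+\nabla^h V_n^{\tau,h}\cdot f(t,x,\alpha_{n+1})\le c(t,x,\alpha_n)+\nabla^h V_n^{\tau,h}\cdot f(t,x,\alpha_n)$ in $\Omega_{T}'$, so $V_n^{\tau,h}$ is a supersolution of \eqref{4.1} with index $n+1$ and shares the terminal data $q$ of $V_{n+1}^{\tau,h}$; the comparison principle then yields $V_{n+1}^{\tau,h}\le V_n^{\tau,h}$, exactly as in Proposition \ref{P.2.1}.

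The Lipschitz estimate is the heart of the matter and is the fully discrete analogue of the interior gradient bound in the proof of Proposition \ref{P.2.0}. I would first prove it on a short time window. Set $M:=2\sqrt{d}\,\|\nabla q\|_\infty+1$ and truncate, $\tilde H(t,x,p):=H(t,x,p)$ for $|p|\le M$ and $\tilde H(t,x,p):=H(t,x,Mp/|p|)$ for $|p|>M$; by \eqref{2.5}, $|\tilde H_p|\le\|f\|_\infty\le 2N$ and $|\tilde H_t|,|\tilde H_x|\le C(1+M)$ everywhere. Let $\tilde V^{\tau,h}$ solve \eqref{4.2} with $\tilde H$ in place of $H$ (again an explicit, hence uniquely solvable, recursion, and monotone since $|\tilde H_p|\le 2N$). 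For each grid direction $e_i$, since $\nabla^h,\Delta^h$ are linear and commute with the translation $x\mapsto x+he_i$, the discrete difference quotient $P_i(t,x):=h^{-1}(\tilde V^{\tau,h}(t,x+he_i)-\tilde V^{\tau,h}(t,x))$ solves a linear scheme
\[
\partial_t^\tau P_i+b_1^i(t,x)+b_2^i(t,x)\cdot\nabla^h P_i=-Nh\Delta^h P_i,\qquad P_i(T,x)=\frac{q(x+he_i)-q(x)}{h},
\]
with $|b_1^i|\le C(1+M)$ and $|b_2^i|\le\|f\|_\infty\le 2N$, so \eqref{N} again furnishes its comparison principle. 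Comparing $P_i$ with the barriers $\pm(\|\nabla q\|_\infty+C(1+M)(T-t))$ gives $|P_i(t,x)|\le\|\nabla q\|_\infty+C(1+M)(T-t)$, hence $|\nabla^h\tilde V^{\tau,h}(t,x)|\le\sqrt{d}(\|\nabla q\|_\infty+C(1+M)(T-t))$, which for $T\le(4\sqrt{d}\,C)^{-1}$ is $\le M$ by the choice of $M$. Then $\tilde H(t,x,\nabla^h\tilde V^{\tau,h})=H(t,x,\nabla^h\tilde V^{\tau,h})$, so $\tilde V^{\tau,h}$ solves the untruncated \eqref{4.2}, and by uniqueness $V^{\tau,h}=\tilde V^{\tau,h}$ is Lipschitz in $x$ with constant $\le\sqrt{d}\,M$, uniformly in $\tau,h$. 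For general $T$ I would iterate over $\lceil 4\sqrt{d}\,CT\rceil$ consecutive windows of length $(4\sqrt{d}\,C)^{-1}$, each time feeding in the value of $V^{\tau,h}$ at the right endpoint as new terminal data and replacing $\|\nabla q\|_\infty$ by the Lipschitz constant inherited from the previous window (multiplied by at most a universal factor per window), producing a Lipschitz constant of the form $C\exp(CT)$; the time-Lipschitz regularity then follows directly from \eqref{4.2}. The identical argument run with $G_0(t,x,p):=c(t,x,\alpha_0(t,x))+p\cdot f(t,x,\alpha_0(t,x))$ in place of $H$ — whose $x$- and $t$-derivatives are bounded by $C(1+M)$ thanks to (A2) for $\alpha_0$, and with $\partial_p G_0=f$, $|\partial_p G_0|\le 2N$ — gives the same bound, in particular for $V_0^{\tau,h}$ and for the limit $V^{\tau,h}$.

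The step I expect to be the main obstacle is precisely this last gradient bound: one must push the $p_s$-equation argument of Proposition \ref{P.2.0} through the fully discrete scheme using only grid-direction difference quotients, verify that the resulting linearized difference-quotient scheme still obeys the comparison principle dictated by \eqref{N} (its drift $b_2^i$ being bounded by $\|f\|_\infty$, just as $H_p$ is), and keep the dimensional factors under control so that the truncation level $M$ closes the self-referential loop once the length of the time window is fixed. Everything else is a faithful transcription of the continuous and semi-discrete proofs, with the discrete comparison principle replacing its viscosity-solution counterpart.
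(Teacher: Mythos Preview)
Your proposal is correct and follows exactly the route the paper itself indicates: the authors explicitly write that the proof of Proposition~\ref{P.4.1} ``is similar to those of Propositions~\ref{P.2.0}, \ref{P.2.1} and Lemma~\ref{L.2.3}, and hence we skip it,'' and you have carried out precisely that transcription, with the natural discrete adaptations (explicit backward recursion in place of the well-posedness appeal to \cite{18Kry}, grid-direction difference quotients $P_i$ in place of the $p_s$ with $e\in\bbS^{d-1}$, and the $\sqrt{d}$ bookkeeping to close the truncation loop). Note only that, consistently with the paper's own remark after Proposition~\ref{P.2.0}, the uniform-in-$n$ Lipschitz constant $C\exp(CT)$ is really only established for $V_0^{\tau,h}$ and $V^{\tau,h}$ (which is all the later theorems use); your write-up already reflects this correctly.
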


\quad The proof of Proposition \ref{P.4.1} is similar to those of Propositions \ref{P.2.0}, \ref{P.2.1} and Lemma \ref{L.2.3},
and hence we skip it.

\section{Analysis of semi-discrete schemes}
\label{sc3}

\subsection{Convergence of PI}
We show that for each fixed $h\in(0,1)$, $v^h_n\to v^h$ as $n\to\infty$ exponentially fast in $L^2_{loc}$ norm. We will assume $T\geq 1$ for convenience.

\begin{theorem}\lb{T.2.4}
Assume {\rm (A1)--(A2)} and $N\geq  1$. 
Let $v^{h}_n$ and $v^{h}$ be, respectively, continuous solutions to \eqref{p.1} and \eqref{p.2}. 
Then there exists a universal constant $C>0$ such that for all $n\geq 1$, $R\geq 1$ and $t \in [0,T]$ we have
\[
\begin{aligned}
&\int_{B_R}\left|v^h_n(t,x)-v^h(t,x)\right|^2\,dx \leq  \frac{h}{2^{n+1}} \int_t^T \int_{\bbR^d}\exp\left[C(1+\|\nabla^h v^h\|_\infty^2)(s-t)/h\right]\times
 \\
&\qquad\qquad\qquad \left|D^h(v^h_0(s,x)-v^h(s,x))\right|^2\min\left\{1,e^{-|x|+R+1}\right\}\, dxds.
\end{aligned}
\]
In particular, we have
$\sup_{t \in [0,T]}\int_{B_R}\left|v^h_n(t,x)-v^h(t,x)\right|^2\,dx \leq C2^{-n}\exp\left[C\exp(CT)/h\right]R^d$.
\end{theorem}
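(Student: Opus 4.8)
The plan is to derive a differential inequality in $n$ for the localized $L^2$-mass of $w_n^h := v_n^h - v^h$, exploiting three ingredients: the monotonicity $v^h \le \cdots \le v_{n+1}^h \le v_n^h \le \cdots \le v_0^h$ from Proposition \ref{P.2.1} (so $w_n^h \ge 0$ and $w_n^h$ is decreasing in $n$), the subtraction of the PDEs \eqref{p.1} and \eqref{p.2}, and the dissipativity coming from the viscosity term $Nh\Delta^h$. First I would subtract \eqref{p.2} from \eqref{p.1}: using the definition \eqref{p.1'} of $\alpha_{n+1}$ as the argmin and the fact that $H(t,x,\nabla^h v^h) \le c(t,x,\alpha_{n+1}) + \nabla^h v^h\cdot f(t,x,\alpha_{n+1})$, one gets an equation of the form
\[
\partial_t w_{n+1}^h + b_{n+1}(t,x)\cdot\nabla^h w_{n+1}^h + Nh\Delta^h w_{n+1}^h = g_n(t,x),
\]
where $b_{n+1} = f(t,x,\alpha_{n+1})$ is bounded ($|b_{n+1}|\le \|f\|_\infty \le 2N$) and the right-hand side $g_n$ is controlled by $|D^h w_n^h|$ times $\|f\|_\infty$ (the gap between evaluating the Hamiltonian at $\nabla^h v_n^h$ versus $\nabla^h v^h$, which by Lipschitzness of $a\mapsto c+p\cdot f$ through the optimizer is $O(|\nabla^h v_n^h - \nabla^h v^h|) = O(|D^h w_n^h|)$ after using \eqref{p.6}).

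Next I would test this equation against $w_{n+1}^h$ with a spatial weight $\phi_R(x) := \min\{1, e^{-|x|+R+1}\}$ (which is essentially constant on $B_R$, decays like $e^{-|x|}$, and satisfies $|D^h\phi_R|\le C\phi_R$ and $|\Delta^h\phi_R|\le C\phi_R$ for $h<1$). Summing (or integrating) by parts over $\bbZ_h^d$ (discrete space — note $\nabla^h$, $\Delta^h$ have clean discrete integration-by-parts identities), the key gain is that the viscosity term yields $-Nh\int \phi_R |D^h w_{n+1}^h|^2 \, dx$ (up to lower-order terms absorbing the weight derivatives and the drift term, using Young's inequality). This produces, for $E_{n+1}(t) := \int \phi_R |w_{n+1}^h(t,x)|^2\,dx$,
\[
-\frac{d}{dt}E_{n+1}(t) + \frac{Nh}{2}\int \phi_R |D^h w_{n+1}^h|^2\,dx \le \frac{C(1+\|\nabla^h v^h\|_\infty^2)}{h}E_{n+1}(t) + \frac{h}{4}\int \phi_R|D^h w_n^h|^2\,dx,
\]
where the $1/h$ in the first term on the right comes from Young applied to the drift term $b_{n+1}\cdot\nabla^h w_{n+1}^h$ against the $Nh|D^h w_{n+1}^h|^2$ available from dissipation, and $g_n w_{n+1}^h$ is split as $\frac{h}{4}|D^h w_n^h|^2 + \frac{C}{h}|w_{n+1}^h|^2$. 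Integrating from $t$ to $T$ with the zero terminal condition $w_{n+1}^h(T,\cdot)=0$ and Grönwall in $t$ gives
\[
E_{n+1}(t) + \frac{Nh}{2}\int_t^T\!\!\int \phi_R|D^h w_{n+1}^h|^2 \le \frac{h}{4}\int_t^T e^{C(1+\|\nabla^h v^h\|_\infty^2)(s-t)/h}\!\int \phi_R|D^h w_n^h(s,x)|^2\,dx\,ds.
\]
Iterating this bound $n$ times — each application contributes a factor $\tfrac14$ together with the $\int_t^T\!\int \phi_R|D^h w_\cdot|^2$ of the previous stage, which is exactly what the left side controls (with the $N/2 \ge 1/2$ constant, and $1/4 \le \tfrac14\cdot(N/2)^{-1}$ giving the claimed $2^{-(n+1)}$ after bookkeeping the constants) — telescopes to the displayed first inequality, with the final $|D^h(v_0^h - v^h)|^2 = |D^h w_0^h|^2$ appearing at the bottom of the recursion.

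Finally, for the "In particular" bound, I would use Lemma \ref{L.2.3}: $v_0^h$ and $v^h$ are Lipschitz with constant $C\exp(CT)$, so $|D^h w_0^h| \le C\exp(CT)$ pointwise, and likewise $\|\nabla^h v^h\|_\infty \le C\exp(CT)$; plugging these in, the time integral $\int_t^T e^{C\exp(CT)(s-t)/h}\,ds \le \frac{h}{C\exp(CT)} e^{C\exp(CT)T/h} \le C h \exp(C'\exp(C'T)/h)$ (crudely bounding $T/h$-type factors), the $h$ out front cancels leaving an overall $\exp(C\exp(CT)/h)$, the weighted integral $\int_{\bbR^d}\phi_R\,dx \le C R^d$ (constant on $B_R$ of volume $\sim R^d$, plus an $O(1)$ tail), restricting $\int_{B_R} \le \int \phi_R$, and collecting the $2^{-(n+1)}$ as $C2^{-n}$. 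The main obstacle I anticipate is getting the discrete integration-by-parts and the Young-inequality splitting tight enough that the dissipation term $Nh|D^h w_{n+1}^h|^2$ genuinely dominates both the drift contribution (which needs the $1/h$ blow-up in the Grönwall exponent — this is where the $\exp(\cdot/h)$ ultimately comes from) and half of the source term, so that the clean contraction factor $2^{-(n+1)}$ survives; handling the weight $\phi_R$'s discrete derivatives uniformly in $h<1$ and keeping the constants genuinely universal (independent of $h$, $R$, $n$) is the delicate bookkeeping.
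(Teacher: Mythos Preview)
Your proposal is correct and follows essentially the same route as the paper: a spatially weighted $L^2$ energy for $w_n^h=v_n^h-v^h$, discrete integration by parts to extract the dissipative term $-Nh\int\phi_R|D^h w_n^h|^2$, a Young-inequality splitting of the drift and source contributions, and iteration of the resulting integral inequality in $n$; the only cosmetic difference is that the paper builds the exponential time weight $e^{At}$ (with $A\sim C(1+\|\nabla^h v^h\|_\infty^2)/h$) directly into the energy $E_{t,n}$ rather than applying Gr\"onwall afterwards. Two small corrections: in the semi-discrete scheme $x$ ranges over $\bbR^d$, not $\bbZ_h^d$, so the ``integration by parts'' is really a translation of the continuous integration variable; the bound on the source should be $|g_n|\le C(1+\|\nabla^h v^h\|_\infty)\,|D^h w_n^h|$ (the factor $\|\nabla^h v^h\|_\infty$ enters through $|f_{n+1}-f|\,|\nabla^h v^h|$, and you do use the correct constant in the displayed inequality); and the monotonicity from Proposition~\ref{P.2.1} that you list as an ingredient is in fact never used---the paper's remark after the proof makes this explicit.
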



\begin{proof}
In this proof, we write $v_n:=v^{h}_n$ and $v:=v^{h}$, and assume $T\geq 1$ for simplicity.
For any fixed $R\geq 1$, let $\varphi=\varphi_R:[0,\infty)\to (0,1]$ be $C^1$ and satisfying
\beq\lb{p.8}
\begin{aligned}
&\varphi(r)=1\quad\text{ on }[0,R],\qquad \varphi(r)=e^{-r+R}\quad\text{ on $[R+1,\infty)$},\\
&-\varphi'(r)\in [0,4\varphi(r)]\quad \text{ for all $r>0$}.
\end{aligned}
\eeq
It is clear that such $\varphi$ exists. Later we write $\varphi(x):=\varphi(|x|)$ for $x\in\bbR^d$.

\quad Next, for some $A>0$ to be determined, set 
\beq\lb{p.7}
E_{t,n}:=\frac{1}{2}e^{At}\int_{\bbR^d}|v_n(t,x)-v(t,x)|^2\varphi(x)\,dx
\eeq
which is finite since $v_n,v$ are uniformly bounded. 
Direct computation yields
\beq\lb{p.4}
\frac{d}{dt}E_{t,n}=AE_{t,n}+e^{At}\underbrace{\int_{\bbR^d}(v_n(t,x)-v(t,x))\left(\partial_tv_n(t,x)-\partial_tv(t,x)\right)\varphi(x)\,dx}_{=:X_{t,n}}.
\eeq

\quad Recall from \eqref{p.15} that
$H(t,x,p)=c(t,x,\alpha(t,x,p))+\nabla v\cdot f(t,x,\alpha(t,x,p))$.
Below, we write 
\begin{align*}
c:=c(t,x,\alpha(t,x,\nabla^h V^{\tau,h}(t,x)))\quad &\text{and}\quad f:=f(t,x,\alpha(t,x,\nabla^h V^{\tau,h}(t,x)))\\
c_n:=c(t,x,\alpha_n(t,x)) \quad &\text{and}\quad f_n:=f(t,x,\alpha_n(t,x)))
\end{align*}
for simplicity. We will also drop $(t,x)$ from the notations of $v(t,x)$ and $v_n(t,x)$, and $(x)$ from $\varphi(x)$ when there is no confusion.
Direct computation yields
\begin{align*}
    \int_{\bbR^d}(\Delta^h v)v\varphi\, dx&=-\int_{\bbR^d}|D^h v|^2\varphi\, dx+\frac{1}{h^2}\sum_{i=1}^d\int_{\bbR^d}v(t,x+he_i)(v(t,x+he_i)-v(t,x))\varphi(x)\, dx\\
&\qquad  \quad  -\frac{1}{h^2}\sum_{i=1}^d\int_{\bbR^d}v(t,x)(v(t,x)-v(t,x-he_i))\varphi(x)\, dx\\
&=-\int_{\bbR^d}|D^h v|^2\varphi\, dx-\int_{\bbR^d}v D^{-h} v\cdot D^{-h}\varphi\, dx,
\end{align*}
where the last equality was obtained by a change of variable. 
We then deduce from the equation that
\beq\lb{p.3}
\begin{aligned}
X_{t,n}&=-\int_{\bbR^d}(v_n-v)(\nabla^hv_n\cdot f_n+c_n+N h\Delta^hv_n-\nabla^hv\cdot f-c-Nh\Delta^hv)\varphi\, dx\\
&\geq  Nh\int_{\bbR^d}|D^h(v_n-v)|^2\varphi \,dx-Nh\int_{\bbR^d}|v_n-v||D^{-h}(v_n-v)||D^{-h}\varphi|\, dx\\
&\qquad\qquad-\int_{\bbR^d} |v_n-v|\left(|\nabla^h(v_n-v)||f_n|+|f_n-f||\nabla^h v|+|c_n-c|\right)\varphi\, dx.
\end{aligned}
\eeq

\quad Due to \eqref{p.8}, $|D^{-h}\varphi(x)|\leq C\varphi(x)$ for some constant $C>0$. 
Also, using $\|f\|_\infty<\infty$ and \eqref{p.6}, we have
$|\nabla^h(v_n-v)||f_n|\leq C(|D^h(v_n-v)|+|D^{-h}(v_n-v)|)$.
Since $v$ is Lipschitz continuous, $|\nabla^h v|\leq M$ for some $M\geq 1$.
So, by \eqref{p.1'} and the uniform Lipschitz continuity of $f,c$ and $\alpha$, we have for some $C>0$,
\beq\lb{p.10}
|f_n-f||\nabla^h v|+|c_n-c|
\leq CM(|D^h(v_{n-1}-v)|+|D^{-h}(v_{n-1}-v)|).
\eeq
With all these, if denoting
\[
G_{t,n}^h:=\int_{\bbR^d}|D^h(v_n(t,x)-v(t,x))|^2\varphi(x)\,dx,
\]
it follows from \eqref{p.3} that for some $C>0$,
\begin{align*}
   X_{t,n} &\geq  Nh G_{t,n}^h-C\int_{\bbR^d} |v_n-v|(|D^h(v_{n}-v)|+|D^{-h}(v_{n}-v)|)\varphi\, dx\\
&\qquad\qquad-CM\int_{\bbR^d} |v_n-v|(|D^h(v_{n-1}-v)|+|D^{-h}(v_{n-1}-v)|)\varphi\, dx.
\end{align*}

\quad Denote $w_n(t,x):=v_n(t,x)-v(t,x)$. Since $\varphi(x-he_i)\leq (1+Ch)\varphi(x)$ by the choice of $\varphi$, there exists $C>0$ such that 
\beq\lb{p.13}
\begin{aligned}
G_{t,n}^{-h} &=\int_{\bbR^d}\sum_{i=1}^d h^{-2}|w_n(t,x)-w_n(t,x-he_i)|^2\varphi(x)\,dx\\
&\leq (1+Ch)\int_{\bbR^d}\sum_{i=1}^d h^{-2}|w_n(t,x)-w_n(t,x-he_i)|^2\varphi(x-he_i)\,dx= (1+Ch)G_{t,n}^{h}.    
\end{aligned}
\eeq  
Then, using \eqref{p.7} and Young's inequality, we get for some universal $C>0$ and any $\sigma_1,\sigma_2 >0$,
\begin{align*}
X_{t,n}&\geq Nh G_{t,n}^h-\frac{\sigma_1}{2+Ch}\int_{\bbR^d} (|D^h(v_{n}-v)|^2+|D^{-h}(v_{n}-v)|^2)\varphi\, dx 
\\
&\qquad\quad -\frac{\sigma_2}{2+Ch}\int_{\bbR^d} (|D^{h}(v_{n-1}-v)|^2+|D^{-h}(v_{n-1}-v)|^2)\varphi\, dx \\
&\qquad\quad-C(2+Ch)(\sigma_1^{-1}+M^2\sigma_2^{-1})\int_{\bbR^d} |v_n-v|^2\varphi\, dx\\
&\geq  (Nh-\sigma_1)G_{t,n}^h-\sigma_2G_{t,n-1}^h-C(\sigma_1^{-1}+M^2\sigma_2^{-1})e^{-At}E_{t,n}.
\end{align*}
Using this and $E_{T,n}=0$, and integrating \eqref{p.4} over $[t,T]$, we obtain for some universal $C>0$,
\beq\lb{p.5}
\begin{aligned}
-E_{t,n}&\geq (A-C\sigma_1^{-1}-CM^2\sigma_2^{-1})\int_t^T E_{s,n}\,ds\\
&\qquad\quad+( Nh-\sigma_1)\int_t^Te^{As}G_{s,n}^h\,ds-\sigma_2\int_t^Te^{As}G_{s,n-1}^h\,ds.
\end{aligned}
\eeq

\quad Now taking 
$\sigma_1:={ h}/{2}$, $\sigma_2:={  h}/{4}$ and $A:=6CM^2/h$,
then \eqref{p.5} and $N\geq1$ yield
\[
\int_t^Te^{As}G_{s,n}^h\,ds\leq \frac12\int_t^Te^{As}G_{s,n-1}^h\,ds\leq\ldots\leq 2^{-n}\int_t^Te^{As}G_{s,0}^h\,ds.
\]
With this, \eqref{p.5} also shows that
$
E_{t,n}\leq \frac{  h}{4}\int_t^Te^{As}G_{s,n-1}^h\,ds\leq  \frac{h}{2^{n+1}}\int_t^Te^{As}G_{s,0}^h\,ds$.
Therefore, for all $n\geq 0$ and $t\in [0,T]$, we obtain 
\begin{align*}
\int_{B_R}|v_n(t,x)-v(t,x)|^2\,dx \leq 
\frac{h}{2^{n+1}} \int_t^T \int_{\bbR^d}e^{A(s-t)}|D^h(v_0(s,x)-v(s,x))|^2\varphi(x)\, dxds,
\end{align*}
which, combined with Lemma \ref{L.2.3}, concludes the proof.
\end{proof}

\begin{remark}
In the proof of Theorem \ref{T.2.4}, we only used the following:
uniform Lipschitz continuity of $f,c$ and $\alpha$, and uniform boundedness of $f$ and $|\nabla^h v^h|$. 
In particular, the solutions $v_n^h$ and $v^h$ are allowed to have certain growth at $x=\infty$, and the comparison principle is not needed. 
\end{remark}

\quad By Theorem \ref{T.2.4}, we immediately have the convergence of the policies.

\begin{theorem}\lb{T.2.add}
Assume {\rm (A1)--(A2)} and $N\geq  1$. 
Then there exists a universal constant $C>0$ such that for all $n\geq 0$ and $R\geq 1$ we have
\[
\sup_{t \in [0,T]}\int_{B_R}\left|\alpha(t,x,\nabla^h v^h_n(t,x))- \alpha(t,x,\nabla^h v^h(t,x))\right|^2\,dx\leq C2^{-n}\exp\left[C\exp(CT)/h\right]R^d.
\]
\end{theorem}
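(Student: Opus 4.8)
The plan is to deduce everything from Theorem \ref{T.2.4} by using only the Lipschitz regularity of $\alpha$ and the fact that $\nabla^h$ is a bounded operator on $L^2$ of a slightly enlarged ball, with operator norm $O(1/h)$. First, by (A2) there is a universal constant $C$ such that, pointwise in $(t,x)$,
\[
\left|\alpha(t,x,\nabla^h v^h_n(t,x))-\alpha(t,x,\nabla^h v^h(t,x))\right|^2\le C\left|\nabla^h(v^h_n-v^h)(t,x)\right|^2,
\]
so it suffices to estimate $\int_{B_R}\left|\nabla^h(v^h_n-v^h)(t,x)\right|^2\,dx$.

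Next, by the definition of $\nabla^h$ (equivalently via \eqref{p.6}), each component of $\nabla^h(v^h_n-v^h)(x)$ equals $\tfrac{1}{2h}$ times a difference of the values of $v^h_n-v^h$ at the two points $x\pm he_i$, whence
\[
\left|\nabla^h(v^h_n-v^h)(t,x)\right|^2\le\frac{C}{h^2}\sum_{i=1}^d\Big(\big|(v^h_n-v^h)(t,x+he_i)\big|^2+\big|(v^h_n-v^h)(t,x-he_i)\big|^2\Big).
\]
Integrating over $x\in B_R$ and translating each summand (each shift has length $h<1$, so $x\pm he_i\in B_{R+1}$ whenever $x\in B_R$) gives
\[
\int_{B_R}\left|\nabla^h(v^h_n-v^h)(t,x)\right|^2\,dx\le\frac{C}{h^2}\int_{B_{R+1}}\big|(v^h_n-v^h)(t,y)\big|^2\,dy.
\]

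Now I apply the ``in particular'' estimate of Theorem \ref{T.2.4} with radius $R+1\ge1$: for all $n\ge1$ and $t\in[0,T]$,
\[
\int_{B_{R+1}}\big|(v^h_n-v^h)(t,y)\big|^2\,dy\le C2^{-n}\exp\left[C\exp(CT)/h\right](R+1)^d.
\]
Combining the last three displays, bounding $(R+1)^d\le 2^dR^d$ for $R\ge1$, and absorbing the factor $C/h^2$ into the exponential using the elementary inequality $h^{-2}\le \exp(Ch^{-1})\le\exp[C\exp(CT)h^{-1}]$ valid for $h\in(0,1)$ after enlarging the universal constant, yields the claimed bound for $n\ge1$. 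For $n=0$ the statement is immediate: since $\alpha$ takes values in the compact set $A$ (or, alternatively, since $\|\nabla^h v^h_0\|_\infty,\|\nabla^h v^h\|_\infty\le C\exp(CT)$ by Lemma \ref{L.2.3}), the integrand is bounded by a constant depending only on universal constants and $T$, so $\int_{B_R}(\cdots)\,dx\le CR^d\le C\exp[C\exp(CT)/h]R^d$.

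The only genuinely delicate point is the loss of the factor $h^{-2}$ when passing from $\nabla^h(v^h_n-v^h)$ to pointwise values of $v^h_n-v^h$; I expect this to be the main (minor) obstacle, and it is disposed of precisely by absorbing $h^{-2}$ into the already-present factor $\exp[C\exp(CT)/h]$ at the cost of enlarging the universal constant $C$, which is harmless since $C$ is permitted to depend on $d$ and $N$ only.
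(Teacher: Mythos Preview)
Your proof is correct and follows essentially the same route as the paper: use the Lipschitz continuity of $\alpha$ to reduce to $\int_{B_R}|\nabla^h(v^h_n-v^h)|^2$, expand $\nabla^h$ as $\frac{1}{2h}$ times shifted differences, and invoke Theorem~\ref{T.2.4}. The paper's own proof is a one-liner doing exactly this; your version is simply more explicit about the translation of the integration domain, the absorption of the lost $h^{-2}$ factor into $\exp[C\exp(CT)/h]$, and the trivial case $n=0$ (which the paper does not spell out, since Theorem~\ref{T.2.4} is stated for $n\ge 1$).
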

\begin{proof}
Since $\alpha$ is Lipschitz continuous,
\[
\begin{aligned}
&\int_{B_R}\left|\alpha(t,x,\nabla^h v^h_n(t,x))- \alpha(t,x,\nabla^h v^h(t,x))\right|^2\,dx \\ 
& \qquad  \leq  \frac{C}{h^2}\sum_{i=1}^d
\int_{B_R}\left|v^h_n(t,x+he_i)-v^h(t,x+he_i)-v^h_n(t,x-he_i)+v^h(t,x-he_i)\right|^2\,dx.
\end{aligned}
\]
We can then conclude the proof from Theorem  \ref{T.2.4}.
\end{proof}

\begin{remark}\lb{R.1}
Here we consider the problem where $f$ is linear in $x$, $c$, and $q$ are quadratic in $x$ (again we assume that the control set $A$ is compact). To compute the values and the optimal policy on $[0,T] \times B_R$ (then $(t,x)\in [0,T]\times B_R$), by \eqref{2.1} we have $|x(t)|\leq  CR e^{CT}$ for some $C>0$. Thus, by \eqref{2.2} and \eqref{eq:vcontrol}, we only need the information of $c,f$ and $q$ (and hence, $H$) for $|x| \leq C'R e^{C'T}$ for some $C'>0$. We can then perform a cut-off of $c,f$ and $q$ for $|x|\geq 2C'R e^{C'T}$, so that  $c,f$ and $q$ are globally bounded, and the value function and the optimal policy remain the same on $[0,T] \times B_R$. This shows that the boundedness conditions we impose are not restrictive.


\quad Of course, this argument does not work if we need to study the problem globally, but Theorems \ref{T.2.4} and \ref{T.2.add} deal exactly with this bounded setting.
\end{remark}

\subsection{Convergence of $v^h$ as $h\to 0$}
Let $v^h$ and $v$ be, respectively, solutions to \eqref{p.2} and \eqref{3.1}.
We show $|v^h- v|\leq C_T\sqrt{h}$, where the rate is sharp (we refer to a simple example given in \cite{10DonKry}).  
We also point out that for a semi-Lagrangian scheme (which preserves the optimization structure), it is possible to obtain a first-order estimate $O(h)$ if the discretized solution is semi-concave, see \cite{dolcetta1984,saluzzi2022}. However, our scheme is based on finite difference, and it is unclear whether or not $v^h$ is semi-concave. Along this line, our Theorem \ref{T.3.5} provides a weak semi-concavity result for $v^h$.

\begin{theorem}\lb{T.2.5}
Assume {\rm (A1)--(A2)} and $N\geq \max\{1,\|f\|_\infty/2\}$. 
Then there exists a universal constant $C>0$ such that
\[
\sup_{(t,x)\in [0,T]\times \bbR^d} |v(t,x)-v^h(t,x)|\leq C(1+T)(1+\|\nabla v\|_\infty) \sqrt{h}.
\]
In particular, we have
$\sup_{(t,x)\in [0,T]\times \bbR^d} |v(t,x)-v^h(t,x)|\leq C\exp(CT) \sqrt{h}$.
\end{theorem}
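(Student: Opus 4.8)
The plan is to use the comparison principle for the monotone scheme \eqref{p.2} against perturbations of $v$, exploiting that $v$ is Lipschitz (Lemma \ref{L.2.3}). The key point is that $v$ solves the first-order equation \eqref{3.1} exactly, so when we plug $v$ into the semi-discrete operator in \eqref{p.2} we pick up two kinds of errors: a \emph{consistency error} from replacing $\nabla v$ by the finite-difference gradient $\nabla^h v$ in $H$, and the explicit \emph{viscosity term} $Nh\Delta^h v$. Neither $\nabla^h v$ nor $\Delta^h v$ need be controlled pointwise for a merely Lipschitz $v$, so the doubling-of-variables technique is the natural remedy: one cannot simply substitute $v$ into the discrete equation.

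First I would set up the standard doubling of variables. For $\eps,\beta>0$ consider
\[
\Phi(t,x,s,y):=v(t,x)-v^h(s,y)-\frac{|x-y|^2}{2\eps}-\frac{|t-s|^2}{2\beta}-\text{(penalization in $x$ at infinity)},
\]
and let $(\bar t,\bar x,\bar s,\bar y)$ be a maximum point. Using the Lipschitz bounds on both $v$ and $v^h$ from Lemma \ref{L.2.3} one gets the usual estimates $|\bar x-\bar y|\le C\eps$, $|\bar t-\bar s|\le C\beta$, and the penalization confines the maximum to a bounded region. Writing $p:=(\bar x-\bar y)/\eps$ for the common ``gradient variable,'' the viscosity inequality for $v$ (as a subsolution of \eqref{3.1}) and the fact that $v^h$ is a (classical, since the scheme is a finite-difference equation on a mesh — or viscosity) solution of \eqref{p.2} give, after subtracting,
\[
H(\bar t,\bar x,p)-H(\bar s,\bar y,\nabla^h v^h(\bar s,\bar y))\ \le\ -Nh\,\Delta^h v^h(\bar s,\bar y)+\text{(time-error terms)}.
\]
Here the second key idea: estimate $-Nh\Delta^h v^h$ \emph{using the equation \eqref{p.2} itself at nearby points}, i.e. bound $h\,\Delta^h v^h$ by the increments of $v^h$, which by the uniform Lipschitz bound on $v^h$ are $O(h)$ per difference, hence $h\Delta^h v^h=O(1)\cdot\frac{1}{h}\cdot h^2\cdot\frac1{h}$... — more carefully, $|h\Delta^h v^h|\le \frac{1}{h}\sum_i|v^h(\cdot+he_i)-2v^h+v^h(\cdot-he_i)|\le C$, which is only $O(1)$, not $O(\sqrt h)$. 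This is the crux, and it is why the rate is $\sqrt h$ and not $h$: one must choose $\eps\sim\sqrt h$ so that the term $|x-y|^2/\eps$ absorbing the consistency error $|H(\bar t,\bar x,p)-H(\bar t,\bar x,\nabla^h v^h)|$, which is $O(h/\eps)$ via $|\nabla^h w-p|\le C h/\eps$-type bounds together with \eqref{2.5}, balances against the viscosity contribution of size $Nh\cdot(\text{something}/\eps)$. Optimizing $\eps=\sqrt h$, $\beta$ comparably small, yields the $C(1+T)(1+\|\nabla v\|_\infty)\sqrt h$ bound; the factor $(1+\|\nabla v\|_\infty)$ enters through \eqref{2.5} since $|H_x|\le C(1+|p|)$ and $|p|\lesssim\|\nabla v\|_\infty$, and the $(1+T)$ (upgraded to $\exp(CT)$) tracks the Gronwall-type accumulation over $[t,T]$ coming from the $|H_x|$ term when closing the inequality. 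Doing the symmetric argument (doubling to compare $v^h$ from below against $v$) gives the two-sided bound, and the final ``in particular'' statement follows by inserting $\|\nabla v\|_\infty\le C\exp(CT)$ from Lemma \ref{L.2.3}.

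The main obstacle is the handling of the viscosity term $Nh\Delta^h v^h$ at the doubling maximum point: it is not small pointwise, so one cannot just discard it. The right move is to absorb it into the penalization using the discrete maximum-principle structure — at a maximum point of $\Phi$ in $y$, the discrete second differences of $-v^h(s,y)-|x-y|^2/(2\eps)$ are $\le 0$, which converts $\Delta^h v^h(\bar s,\bar y)$ into a controlled quantity bounded by $\Delta^h$ of the smooth test function $|x-y|^2/(2\eps)$, i.e. $O(1/\eps)$ plus genuinely negative leftover. Thus $Nh\Delta^h v^h$ contributes $\le CNh/\eps$, and with $\eps=\sqrt h$ this is $O(\sqrt h)$. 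Making this discrete-test-function manipulation precise, while simultaneously tracking the $x\to\infty$ penalization so that a maximum exists, is the technical heart; everything else is the routine doubling-of-variables bookkeeping plus Gronwall in time.
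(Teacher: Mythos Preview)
Your proposal is correct and follows essentially the same route as the paper: doubling of variables, using the discrete maximum in $y$ to replace $-Nh\Delta^h v^h$ by $Nh\Delta^h$ of the (quadratic) test function, which is $O(h/\eps)$ in your scaling, and then balancing $\eps\sim\sqrt h$. The one point worth flagging is that the $(1+T)$ factor does not arise from a Gronwall step: the paper adds an explicit linear term $\frac{\sigma}{T}(2T-t-s)$ to $\Phi$ so that subtracting the two viscosity inequalities gives $2\sigma/T\le C M\sqrt h$ directly, and this is where the time dependence enters (also, for a purely quadratic penalty $\nabla^h=\nabla$ exactly, so there is no separate $h/\eps$ gradient-consistency error---only the $Nh\Delta^h(\text{test})$ and $|H_t|,|H_x|$ contributions matter).
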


\begin{remark}
This rate was obtained in \cite{DonKry,18Kry}  for a large class of parabolic Bellman equations with Lipschitz coefficients. 
We apply a different argument -- the classical doubling variable method that is used in \cite{crandall1984two} in which a discrete space-time homogeneous Hamilton-Jacobi equation is discussed. 
This argument allows us to obtain the same sharp estimate for the scheme \eqref{4.1}, while it seems that the method in \cite{DonKry,18Kry} cannot (see Remark \ref{R.4}).
See also \cite{CGT-11} for a different proof of this convergence rate via the nonlinear adjoint method.
\end{remark}

\begin{proof}
We assume that $T\geq 1$. Suppose for some $(t_0,x_0)\in [0,T]\times \bbR^d$ such that
\beq\lb{3.2}
8\sigma:=v(t_0,x_0)-v^h(t_0,x_0)\geq \frac12 \sup_{(t,x)\in [0,T]\times \bbR^d}\left[ v(t,x)-v^h(t,x)\right]>0.
\eeq
Below we will show $\sigma\leq CT(1+\|\nabla v\|_\infty)\sqrt{h}$.

\quad Consider a smooth function $g:\bbR^{d+1}\to [0,1]$ such that
\begin{enumerate}
    \item[(g1)] $g(t,x)=1-t^2-|x|^2$ if $t^2+|x|^2<1/2$,
    
    \smallskip
    
    \item[(g2)] $0\leq g(t,x)\leq 1/2$ if $t^2+|x|^2>1/2$, and $g(t,x)= 0$ if $t^2+|x|^2>1$.
\end{enumerate}
For $\eps>0$, denote $g_\eps(t,x):=g(t/\eps,x/\eps)$, and
\[
 L:=\sup\left\{v(t,x),-v^h(t,x)\,:\,(t,x)\in [0,T]\times\bbR^d\right\}+1\geq 1,
\]
By Lemma \ref{L.2.3}, $\sigma\leq L\leq CT$ for some universal constant $C>0$. Next, for $\phi(x):=(1+|x|^2)^{1/2}$ and $R\geq |x_0|+T$, we define $\Phi^h: [0,T]^2\times\bbR^{2d}\to \bbR$ by
\begin{align*}
\Phi^h(t,s,x,y)&:=v(t,x)-v^h(s,y)-\frac{\sigma}{T}(2T-t-s)\\
&\qquad\quad-\frac{\sigma}{R}(\phi(x)+\phi(y))+(8L+2\sigma)g_\eps(t-s,x-y).
\end{align*}

\quad Since $v,v^h$ are bounded and continuous, there exists $(t_1,s_1,x_1,y_1)\in [0,T]^2\times \bbR^{2d}$ such that
\beq\lb{3.3}
\Phi^h(t_1,s_1,x_1,y_1)=\max_{[0,T]^2\times \bbR^{2d}}\Phi^h(t,s,x,y).
\eeq
Due to $\phi(x_0)\leq R$, by \eqref{3.2},
\beq\lb{3.10}
\Phi^h(t_1,s_1,x_1,y_1)\geq \Phi^h(t_0,t_0,x_0,x_0)\geq 8L+6\sigma.
\eeq
Since $\max\{v(t_1,x_1),-v^h(s_1,y_1)\}\leq L$, we deduce
$\Phi^h(t_1,s_1,x_1,y_1)\leq 2L+(8L+2\sigma)g_\eps(t_1-s_1,x_1-y_1)$,
which, together with \eqref{3.10}, implies
$g_\eps(t_1-s_1,x_1-y_1)\geq {3}/{4}$.
Then by (g1), we get that for some $C>0$, 
\beq\lb{3.4}
g_\eps(t-s,x-y)=1-\eps^{-2}|t-s|^2-\eps^{-2}|x-y|^2,
\eeq
whenever $|t-t_1|,|s-s_1|,|x-x_1|,|y-y_1|\leq \eps/C$.

\quad Now, by \eqref{3.3}, the mapping
\beq\lb{3.5}
(t,x)\mapsto v(t,x)+\frac{\sigma}{T}t-\frac{\sigma}{R}\phi(x)+(8L+2\sigma)g_\eps(t-s_1,x-y_1).
\eeq
is maximized at $(t,x)=(t_1,x_1)$. 
Together with the fact that $v$ is Lipschitz continuous (taking $M:=1+\|\nabla v\|_\infty$) and $|\nabla\phi|\leq 1$, 
we find that 
$|\nabla_x \,g_\eps(t_1-s_1,x_1-y_1)|\leq (M+{\sigma}{R^{-1}})(8L+2{\sigma})^{-1}$
and, 
$|\partial_t \,g_\eps(t_1-s_1,x_1-y_1)|\leq (M+{\sigma}T^{-1})(8L+2\sigma)^{-1}$.
By \eqref{3.4}, $\sigma\leq L\leq CT$ and $R\geq T$, these yield
\beq\lb{3.8}
|x_1-y_1|\leq C\eps^2(M+{\sigma}{R}^{-1})(L+\sigma)^{-1}\leq C\eps^2ML^{-1},
\eeq
and
\beq\lb{3.9}
|t_1-s_1|\leq C\eps^2(M+{\sigma}T^{-1})(L+\sigma)^{-1}\leq C\eps^2M L^{-1}.
\eeq

\quad Now, we firstly assume that $t_1,s_1<T$. In view of \eqref{3.5}, we apply the viscosity solution test for $v$ to get
\beq\lb{4.8}
\begin{aligned}
&-\frac{\sigma}{T}-(8L+2{\sigma})\,\partial_t g_\eps(t_1-s_1,x_1-y_1)\\
&\qquad\qquad+H\left(t_1,x_1,\frac\sigma{R}\nabla\phi(x_1)-(8L+2\sigma)\nabla_x \,g_\eps(t_1-s_1,x_1-y_1))\right)\geq 0.
\end{aligned}
\eeq
Similarly, since
$(s,y)\to v^{h}(s,y)-\frac{\sigma}{T}s+\frac{\sigma}{R}\phi(y)-(8L+2\sigma)g_\eps(t_1-s,x_1-y)$
is minimized at $(s_1,y_1)$, the comparison principle yields
\begin{align*}
&\frac{\sigma}{T}-(8L+2\sigma)\,\partial_t g_\eps(t_1-s_1,x_1-y_1)\\
&\qquad\qquad+H\left(s_1,y_1,-\frac\sigma{R}\nabla^h\phi(y_1)-(8L+2\sigma)\nabla_x^h\, g_\eps(t_1-s_1,x_1-y_1)\right)\\
&\qquad\qquad - Nh\Delta^h \left[\frac{\sigma}{R}\phi(y_1)-(8L+2\sigma)g_\eps(t_1-s_1,x_1-y_1))\right]\leq 0.
\end{align*}
Thus we get
\beq\lb{3.11}
\begin{aligned}
\frac{2\sigma}{T}&\leq H\left(t_1,x_1,\frac\sigma{R}\nabla\phi(x_1)-(8L+2\sigma)\nabla_x\, g_\eps(t_1-s_1,x_1-y_1))\right)\\
&\qquad\qquad-H\left(s_1,y_1,-\frac\sigma{R}\nabla\phi(y_1)-(8L+2\sigma)\nabla_x^h\, g_\eps(t_1-s_1,x_1-y_1)\right)\\
&\qquad\qquad + Nh\Delta^h \left[\frac{\sigma}{R}\phi(y_1)-(8L+2\sigma)g_\eps(t_1-s_1,x_1-y_1))\right].
\end{aligned}
\eeq

\quad It follows from \eqref{3.4} that for $h \ll \eps$, we have at point $(t_1-s_1,x_1-y_1)$,
\beq\lb{3.6'}
\nabla_x^h \,g_\eps=\nabla_x\, g_\eps=2\eps^{-2}(x_1-y_1), \quad \Delta^h g_\eps=-2d\eps^{-2}.
\eeq
Due to $|\nabla\phi|\leq 1$ and $\Delta^h \phi\leq C$, we get
\beq\lb{3.6}
Nh\Delta^h \left[\frac{\sigma}{R}\phi(y_1)-(8L+2\sigma)g_\eps(t_1-s_1,x_1-y_1))\right]\leq CL\eps^{-2}h.
\eeq
Using \eqref{3.11}--\eqref{3.6} and the regularity of $H$ (see \eqref{2.5}), we obtain for some universal $C$,
\begin{align*}
{2\sigma}{T^{-1}}&\leq C\sigma R^{-1}(|\nabla\phi(x_1)|+|\nabla\phi(y_1)|)+CL\eps^{-2}h\\
&\qquad +C(|t_1-s_1|+|x_1-y_1|)\left[1+(8L+2\sigma)|\nabla_x \,g_\eps(t_1-s_1,x_1-y_1)|\right]\\
&\leq C\sigma R^{-1}+CL\eps^{-2}h +C(|t_1-s_1|+|x_1-y_1|)\left(1+L\eps^{-2}|x_1-y_1|\right)
\end{align*}
which, by \eqref{3.8} and \eqref{3.9}, yields
${\sigma}{T^{-1}}\leq C\sigma R^{-1}+CL\eps^{-2}h+C\eps^2M^2L^{-1}$.
Now we take $\eps:=M^{-1/2}L^{1/2}h^{1/4}$ and pass $R\to \infty$. Then when $h$ is sufficiently small, we obtain 
$\sigma\leq  C TM \sqrt{h}$ for some universal $C > 0$.
This finishes the proof of the upper bound of $\sup_{ [0,T]\times\bbR^d }(v-v^h)$ in the case when $t_1,s_1<T$.

\quad Next, suppose that one of $t_1$ and $s_1$ equals to $T$. 
We only prove for the case when $t_1=T$. By \eqref{3.10} and the definition of $\Phi^h$,
\[
8L+6\sigma \leq v(t_1,x_1)-v^h(s_1,y_1)+(8L+2\sigma)g_\eps(t_1-s_1,x_1-y_1).
\]
It follows from the proof Lemma \ref{L.2.3} that $v^h$ is Lipschitz continuous with unit Lipschitz constant when $|T-t|\leq C$. Note that $\eps^2ML^{-1}\leq C$. Hence \eqref{3.8} and \eqref{3.9} yield
\begin{align*}
8L+6\sigma 
&\leq |v(T,x_1)-q(y_1)|+|q(y_1)-v^h(s_1,y_1)|+(8L+2\sigma)g_\eps(T-s_1,x_1-y_1)\\
&\leq C(|x_1-y_1|+|T-s_1|)+8L+2\sigma \leq C\eps^2ML^{-1}+8L+2\sigma.
\end{align*}
This yields $\sigma\leq C \sqrt{h}$ for some universal $C>0$.

\quad Finally, the upper bound estimate for $ \sup_{ [0,T]\times B_R} ( v^h-v)$ follows by using the same argument as the above.
Applying Lemma \ref{L.2.3} permits to conclude.
\end{proof}

\subsection{Almost everywhere convergence of the policy}
It was proved in \cite{vis} that, under suitable assumptions, the solution $v$ to \eqref{3.1} is semi-concave in space. From this, we are able to derive the almost everywhere convergence of the policies. 

\quad Below we say that a function $g:\bbR^d \to\bbR$ is uniformly semi-concave if there exists $C>0$ such that for all $x,y\in \bbR^d $ we have
$g(x+y)+g(x-y)-2g(x)\leq C|y|^2$.
If $g$ is uniformly bounded and Lipschitz continuous, and both $\pm g$ are uniformly semi-concave, then $g$ is bounded in $ W^{2,\infty}(\bbR^d)$ and we denote by 
\[
\|g\|_{W^{2,\infty}}=\|g\|_\infty + \|\nabla g\|_\infty + \|\nabla^2 g\|_\infty.
\]
We make the following assumption:
\begin{itemize}
    \item[(A3)] $q(\cdot)$ is uniformly semi-concave, and $c(t,\cdot,a),f(t,\cdot,a)$ are bounded in $W^{2,\infty}(\bbR^d)$ uniformly in $t\in[0,T]$ and $a\in A$.
\end{itemize}

\begin{theorem}\lb{T.2.8}
Under the assumptions of Theorem \ref{T.2.5}, further assume  {\rm (A3)}, $q\in C^4$, $H\in C^2$, and   when $|p|\leq \sup_{\tau,h}\|\nabla^hV^{\tau,h}\|_{L^{\infty}(\Omega_T^{\tau,h})}$, $|D_pH|\leq N$ and $-D_{pp}H\geq \theta I$ for some $\theta>0$. Then $v^h(t,\cdot)$ and $v(t,\cdot)$ are uniformly semi-concave  for all $t\in [0,T]$. Moreover, for each $t\in[0,T]$ we have for a.e. $x\in\bbR^d$,
\[
\alpha(t_h,x_h,\nabla^h v^h(t_h,x_h ))\to \alpha(t,x,\nabla v(t,x))\quad\text{ as }h\to 0
\]
where $[0,T]\times\bbR^d\ni(t_h,x_h)\to (t,x)$ as $h\to 0$. 
\end{theorem}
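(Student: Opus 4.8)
The plan is to establish two separate facts and then combine them. First, I would prove the semi-concavity of $v(t,\cdot)$. Under (A3), the terminal datum $q$ is uniformly semi-concave, and $c(t,\cdot,a), f(t,\cdot,a)$ are in $W^{2,\infty}$ uniformly. The standard way to pass this to the value function $v_*$ of the control problem is a direct argument on the control representation \eqref{eq:vcontrol}: fix $x$, let $\alpha$ be (near-)optimal for the midpoint $x$, run the same control from $x+y$ and $x-y$, and use Gr\"onwall together with the $W^{2,\infty}$ bounds on $f$ to control the spread of the trajectories, and the $W^{2,\infty}$ bounds on $c$ and the semi-concavity of $q$ to bound $J(t,x+y,\alpha)+J(t,x-y,\alpha)-2J(t,x,\alpha)$ by $C|y|^2 e^{CT}$. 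This gives $v(t,x+y)+v(t,x-y)-2v(t,x)\leq C_T|y|^2$, i.e. uniform semi-concavity of $v(t,\cdot)$ for every $t$. (Alternatively one may simply cite \cite{vis} as the statement of the theorem suggests.) Since $v(t,\cdot)$ is also Lipschitz by Lemma \ref{L.2.3}, semi-concavity implies that $v(t,\cdot)\in W^{2,\infty}_{loc}$ is twice differentiable a.e., and in particular $\nabla v(t,\cdot)$ exists and is approximately continuous at a.e.\ $x$; moreover at such a point the difference quotients $\nabla^h v(t,x)$ converge to $\nabla v(t,x)$ as $h\to0$. The Lipschitz-plus-semi-concavity bound on $v$ in $W^{2,\infty}$ is exactly the quantitative input that makes the finite-difference gradient converge.

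Second, I would quantify the closeness of $v^h$ to $v$: by Theorem \ref{T.2.5}, $\|v-v^h\|_{L^\infty}\leq C_T\sqrt h$. The subtlety is that Theorem \ref{T.2.5} controls the $C^0$ distance but the policy depends on $\nabla^h v^h$, so I need to upgrade $C^0$ closeness of $v^h$ to $v$ into convergence of $\nabla^h v^h(t_h,x_h)$ to $\nabla v(t,x)$. Write
\[
\nabla^h v^h(t_h,x_h) - \nabla v(t,x) = \nabla^h(v^h-v)(t_h,x_h) + \bigl(\nabla^h v(t_h,x_h) - \nabla v(t,x)\bigr).
\]
The second term tends to $0$ at every point of twice-differentiability of $v(t,\cdot)$ that is also a Lebesgue point of $\nabla v$, using in addition continuity of $v$ in $t$ and $(t_h,x_h)\to(t,x)$; this is an a.e.\ set of $x$. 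The first term is the problematic one, since a naive bound gives $\|\nabla^h(v^h-v)\|_\infty \leq \tfrac{C}{h}\|v^h-v\|_\infty \leq C_T h^{-1/2}$, which blows up. To fix this I would not use the pointwise difference quotient directly but rather exploit the two-sided semi-concavity structure: both $\pm v$ and $\pm v^h$ satisfy one-sided second-difference bounds (for $v$ from semi-concavity of $v$ and of $-v$ — the latter coming from the concave-type estimate, or just Lipschitz regularity giving a one-sided bound on the relevant combination; for $v^h$ from the analogous argument at the discrete level using the monotone scheme and (A3), paralleling Lemma \ref{L.2.3}). A function that is $L$-Lipschitz and $C$-semiconcave has the property that wherever it is differentiable, a forward difference quotient at scale $h$ lies within $O(\sqrt{\text{(oscillation)}\cdot C} + Ch)$ of the true derivative — more precisely, if $|w|\leq \delta$ on a ball and $w$ is semiconcave with constant $C$ on that ball, then $|\nabla^h w|\leq C'(\sqrt{\delta/h\cdot h} \cdot\ldots)$; the clean statement is: $\nabla^h$ of a $W^{2,\infty}\cap$(small $L^\infty$) function is small. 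Applying this with $w=v^h-v$, $\delta=C_T\sqrt h$, and second-difference bound $O(h^{-?})$ — here one must be careful, as $v^h$'s $W^{2,\infty}$ norm may depend on $h$. The correct route is: use that $v$ is genuinely $W^{2,\infty}$ (constant independent of $h$), so at a twice-differentiability point $x$ of $v(t,\cdot)$, $v(t,\cdot)$ is sandwiched between two quadratics $q_\pm$ with $q_+(x)=q_-(x)$, $\nabla q_\pm(x)=\nabla v(t,x)$, $\|D^2 q_\pm\|\leq C$; then $v^h$ lies within $C_T\sqrt h$ of these quadratics on a fixed ball, and a direct computation bounds $\nabla^h v^h(t_h,x_h)$ between $\nabla^h q_-(\cdot) - C_T\sqrt h/h$ and $\nabla^h q_+(\cdot) + C_T\sqrt h /h$ — which still has the $\sqrt h/h$ defect.

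The resolution of this defect is the main obstacle, and I expect it to be handled by choosing the comparison scale adaptively rather than fixing it at $h$: since $v(t,\cdot)\in W^{2,\infty}$, for each $\eta>0$ there is a scale $r=r(\eta)$ with $|v(t,y)-v(t,x)-\nabla v(t,x)\cdot(y-x)|\leq \eta|y-x|$ for $|y-x|\leq r$ at a.e.\ $x$; choosing first $\eta$ small, then $h$ small enough that $C_T\sqrt h \ll \eta r$ and $h\ll r$, one gets $|\nabla^h v^h(t_h,x_h) - \nabla v(t,x)|\leq C\eta$. Concretely: on the ball $B_r(x)$, $\nabla^h v^h(t_h,x_h)$ is a difference quotient at scale $h\ll r$ of a function that is $C_T\sqrt h$-close to the affine function $y\mapsto v(t,x)+\nabla v(t,x)\cdot(y-x)$ up to error $\eta r$ — wait, the error $\eta|y-x|$ at distance $h$ is $\eta h$, so $|v^h(t_h,x_h\pm he_i) - [\text{affine}]|\leq C_T\sqrt h + \eta h + \omega(h)$ where $\omega$ absorbs the $t$-dependence and $(t_h,x_h)\to(t,x)$; dividing by $h$ gives $|\nabla^h v^h(t_h,x_h)-\nabla v(t,x)|\leq C_T\sqrt h/h + \eta + \omega(h)/h$. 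This still fails unless $\sqrt h/h\to$ bounded, which it does not. Hence the genuine fix must use semi-concavity of $v^h$ \emph{with $h$-independent constant}: I claim the proof of Lemma \ref{L.2.3}, combined with (A3), yields that $v^h(t,\cdot)$ is uniformly semi-concave with constant $C_T$ independent of $h$ (differentiate the scheme twice in a discrete sense and use the monotone structure plus the $W^{2,\infty}$ bounds on $c,f,q$; the viscosity term only helps). Granting this, $w:=v^h-v$ satisfies $\|w\|_\infty\leq C_T\sqrt h$ and $\pm w$ are $C_T$-semiconcave uniformly in $h$; the elementary lemma ``if $|w|\leq\delta$ and $w$ is $\kappa$-semiconcave and $(-w)$ is $\kappa$-semiconcave then $|\nabla^h w|\leq \sqrt{2\kappa\delta} + \kappa h$ at points where $w$ is differentiable, and more robustly $|\nabla^h w(y)|\leq C(\sqrt{\kappa\delta}+\kappa h)$ for all $y$'' finishes it: take $\delta = C_T\sqrt h$, $\kappa=C_T$, so $|\nabla^h(v^h-v)(t_h,x_h)|\leq C_T h^{1/4}\to 0$. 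Combining the two terms and then using Lipschitz continuity of $\alpha$ in all variables (A2) to pass from convergence of $\nabla^h v^h(t_h,x_h)\to\nabla v(t,x)$ and $(t_h,x_h)\to(t,x)$ to convergence of $\alpha(t_h,x_h,\nabla^h v^h(t_h,x_h))\to\alpha(t,x,\nabla v(t,x))$ completes the proof. The crux, to repeat, is the $h$-uniform semi-concavity of $v^h$ together with the elementary ``small and semiconcave implies small difference quotient'' lemma, which together beat the naive $h^{-1/2}$ loss down to a gain of $h^{1/4}$.
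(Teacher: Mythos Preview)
Your quantitative route via a Landau--Kolmogorov interpolation has a genuine gap. The inequality you invoke---``$|w|\le\delta$ and $\pm w$ both $\kappa$-semiconcave imply $|\nabla^h w|\le C(\sqrt{\kappa\delta}+\kappa h)$''---requires \emph{two-sided} second-difference control, i.e.\ $w\in C^{1,1}$. With $w=v^h-v$ this forces both $v$ and $v^h$ to be simultaneously semiconcave \emph{and} semiconvex. But (A3) assumes only that $q$ is semiconcave, and viscosity solutions of first-order Hamilton--Jacobi equations with merely semiconcave terminal data are in general only semiconcave, not $C^{1,1}$ (e.g.\ $q(x)=-|x|$ with $H(p)=\tfrac12|p|^2$). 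One-sided semiconcavity bounds $w(x+y)+w(x-y)-2w(x)$ from above, not the centered difference $w(x+y)-w(x-y)$, so it yields no control on $\nabla^h w$; your ``$\pm w$ are $C_T$-semiconcave'' step therefore fails. Separately, your side claim that $v^h(t,\cdot)$ is uniformly semiconcave with an $h$-independent constant is stronger than what can be established here: the companion result (the theorem immediately following this one, proved in the discrete case as Theorem~\ref{T.4.11}) only gives the weak bound $v^h(t,x+y)+v^h(t,x-y)-2v^h(t,x)\le C_T(|y|^2+\sqrt h)$, with an unavoidable $\sqrt h$ defect.

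The paper sidesteps all of this by a soft compactness argument rather than a rate. Since the $v^h$ are uniformly Lipschitz (Lemma~\ref{L.2.3}), the discrete gradients $\nabla^h v^h(t_h,x_h)$ are bounded; along any subsequence they converge to some $p$. Uniform convergence $v^h\to v$ together with the semiconcavity of $v$ allows one to invoke the stability of the superdifferential and conclude $p\in D^+v(t,x)$. At every $x$ where $v(t,\cdot)$ is differentiable---which is a.e.\ by Lipschitz regularity---the set $D^+v(t,x)$ is the singleton $\{\nabla v(t,x)\}$, forcing $p=\nabla v(t,x)$. As every subsequential limit agrees, the full sequence converges, and the Lipschitz continuity of $\alpha$ in (A2) transfers this to the policies. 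No interpolation, no two-sided bounds, and no quantitative control of $\nabla^h(v^h-v)$ are needed.
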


\quad The extra assumptions are made to show semi-concavity of $v^h$. In general, we can show a weak type of semi-concavity of $v^h$.

\begin{theorem}\lb{T.3.5}
Under the assumptions of Theorem \ref{T.2.5}, further assume {\rm (A3)}. Then there exists $C>0$ (also depending on {\rm(A3)}) such that for all $h\in (0,1)$, $t\in [0,T]$ and $x,y\in \bbR^d$,
\[
v^h(t,x+y)+v^{h}(t,x-y)-2v^{h}(t,x)\leq C\exp(CT)\,(|y|^2 +\sqrt{h}).
\]
\end{theorem}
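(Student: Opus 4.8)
The plan is to establish the weak semi-concavity bound for $v^h$ by combining three ingredients: the semi-concavity of the continuous solution $v$ (Theorem \ref{T.2.8}), the $\sqrt{h}$ convergence rate $\|v-v^h\|_\infty \leq C\exp(CT)\sqrt h$ (Theorem \ref{T.2.5}), and a direct doubling-of-variables argument applied to the semi-discrete equation \eqref{p.2}. A naive triangle-inequality attempt — writing the second difference of $v^h$ as the second difference of $v$ plus three copies of $(v^h-v)$ and bounding each copy by $C\exp(CT)\sqrt h$ — produces the unwanted constant $\sqrt h$ but loses the $|y|^2$ scaling for small $y$; so this alone is not enough, but it will be the right tool in the regime $|y|^2 \gtrsim \sqrt h$. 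The work is in the complementary regime $|y|^2 \lesssim \sqrt h$, where one must genuinely exploit the structure of the equation.

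The main step is therefore a direct argument on \eqref{p.2}. Following the method used to prove semi-concavity of $v$ in \cite{vis} and adapting it to the semi-discrete, viscous setting (much as in the proof of Theorem \ref{T.2.5}), I would fix $y$ and study the quantity
\[
w(t,x) := v^h(t,x+y) + v^h(t,x-y) - 2 v^h(t,x),
\]
and show that $\sup_x w(t,\cdot)$ satisfies a Gronwall-type differential inequality in $t$ (integrated backward from $t=T$). At the terminal time, (A3) gives $w(T,x) \leq C|y|^2$ since $q$ is uniformly semi-concave. For the evolution, one writes the equations satisfied by $v^h(\cdot,x+y)$, $v^h(\cdot,x-y)$, $v^h(\cdot,x)$, uses the convexity-type inequality for the Hamiltonian coming from (A3) — namely $H(t,x+y,p_+) + H(t,x-y,p_-) - 2H(t,x,p) \leq C|y|^2 + C|p_+ + p_- - 2p|\cdot(1+|p|)$, which follows from \eqref{2.5} together with the $W^{2,\infty}$ bounds on $c,f$ and the Lipschitz bound on $\alpha$ — and crucially uses that the discrete Laplacian term is \emph{linear}, so it contributes $Nh\Delta^h w$ with the right sign after testing, exactly as in the proof of Theorem \ref{T.2.4}. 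The $h$-finite-difference nature of $\nabla^h$ and $\Delta^h$ means one should test against a suitable cutoff and an exponential weight $e^{At/h}$-type factor (as in Theorem \ref{T.2.4}) to absorb the $1/h$ losses coming from $|D^h(\cdot)|^2$ terms; this is why a bare $\sqrt h$ (rather than $h$) survives in the final bound and why the constant is $C\exp(CT)$.

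The main obstacle I anticipate is handling the gradient-mismatch term $|\nabla^h v^h(t,x+y) + \nabla^h v^h(t,x-y) - 2\nabla^h v^h(t,x)|$ that appears when comparing the Hamiltonians: it is itself a second difference of $v^h$ (in a discretized sense), so one cannot simply bound it by $C|y|^2$ a priori — that would be circular. The resolution is the standard one for semi-concavity proofs: either (i) absorb this term into $w$ via the monotone/viscous structure (the $-Nh\Delta^h$ term dominates the bad first-order contribution because $N \geq \|f\|_\infty/2$, giving a net dissipative effect on second differences), or (ii) run the argument at the level of the doubled function with an additional penalization $-\frac{\sigma}{R}(\phi(x)+\phi(y))$ and a localizing $g_\eps$ as in Theorem \ref{T.2.5}, extracting a closed inequality for $\sigma := \sup w$ of the form $\sigma \leq C|y|^2 + CL\eps^{-2}h + C\eps^2(\cdots)$ and optimizing in $\eps$ to get $\sigma \leq C\exp(CT)(|y|^2 + \sqrt h)$. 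I would pursue route (ii), since it parallels the already-established Theorem \ref{T.2.5} most closely and cleanly produces the stated $|y|^2 + \sqrt h$ form; the case $|y|^2 \geq \sqrt h$ is then disposed of separately by the triangle inequality with Theorems \ref{T.2.5} and \ref{T.2.8} as noted above, and Lemma \ref{L.2.3} is invoked at the end to control the terminal-time and boundary contributions.
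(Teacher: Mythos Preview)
Your central misstep is in the first paragraph: you dismiss the ``naive triangle-inequality attempt'' as insufficient, but in fact it already proves the statement in full. The $\sqrt h$ you call ``unwanted'' is precisely the $\sqrt h$ appearing in the conclusion. Writing
\[
v^h(t,x+y)+v^h(t,x-y)-2v^h(t,x)
=\big[v(t,x+y)+v(t,x-y)-2v(t,x)\big]
+\sum_{\pm}\big[(v^h-v)(t,x\pm y)\big]-2(v^h-v)(t,x),
\]
bounding the first bracket by $C\exp(CT)|y|^2$ via the semi-concavity of $v$ (Theorem~\ref{T.2.8}, whose constant under {\rm(A1)--(A3)} is of this form), and bounding the remaining four terms by $4\|v^h-v\|_\infty\le C\exp(CT)\sqrt h$ (Theorem~\ref{T.2.5}), you get $C\exp(CT)(|y|^2+\sqrt h)$ for \emph{all} $y$, not just for $|y|^2\gtrsim\sqrt h$. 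There is no separate regime to handle.

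That said, the paper does \emph{not} take this shortcut; it argues directly on the discrete equation, as in the proof of Theorem~\ref{T.4.11}. The key technical device there is a \emph{triple} spatial doubling: one considers three independent points $x,y,z$ and the test function built from
\[
\psi(x,y,z):=|x-y|^4+|z-y|^4+|x+z-2y|^2,
\]
then applies the viscosity tests at the maximizers $x_0,z_0$ and the minimizer $y_0$ and combines them using the $W^{2,\infty}$ bounds on $c,f$. The quartic terms are essential: they produce gradient contributions $q_{x_0}=2(|x_0-y_0|^2+h^2)(x_0-y_0)$ (and similarly for $z_0$), so that the cross term $(f_\alpha(x_0)-f_\alpha(y_0))\cdot q_{x_0}$ is of order $|x_0-y_0|^4$ and can be reabsorbed into $\psi$. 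This is exactly what resolves the gradient-mismatch obstacle you correctly flag. Your ``route (ii)'' as written --- fixing the shift $y$, studying $w(t,x)$, and doubling \`a la Theorem~\ref{T.2.5} with $g_\eps$ --- does not include this mechanism, and the circularity you anticipate would indeed bite: bounding $|\nabla^h v^h(t,x+y)+\nabla^h v^h(t,x-y)-2\nabla^h v^h(t,x)|$ requires second-difference control of $v^h$, which is the conclusion.

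In summary: your triangle-inequality argument is a complete proof that is shorter than the paper's and should not have been discarded; the paper's own proof is a self-contained doubling argument on the discrete equation (useful because it does not rely on the continuum semi-concavity of $v$), but it needs the specific triple-variable quartic penalty of Theorem~\ref{T.4.11}, which your route (ii) sketch omits.
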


\quad The proofs of the two theorems are similar to those of Theorem \ref{T.4.10} and Theorem \ref{T.4.11}, and we choose to write the full details down there (as it is slightly more complicated there).

\section{Analysis of discrete space-time schemes}
\label{sc4}

\subsection{Convergence of PI} 
The parallel result of Theorem \ref{T.2.4} on the convergence of $V_n^{\tau,h}\to V^{\tau,h}$ holds the same
(see Figure \ref{fig} for a numerical illustration).
However, the proof is more involved due to the discretization in the time direction. In it, we will emphasize the difference.

\begin{theorem}\lb{T.4.2}
Assume  {\rm (A1)--(A2)} and $N\geq 1$. Let $V_n:=V^{\tau,h}_n$ and $V:=V^{\tau,h}$ be, respectively, continuous solutions to \eqref{4.1} and \eqref{4.2}. Then there exists a universal constant $C>0$ such that if $C(1+\|\nabla^h V\|^2_\infty)\tau\leq h$,
we have for all $n\geq 1$, $R\geq 1$ and $t\in \bbN_{T}^\tau$,
\[
\begin{aligned}
&\sum_{x\in\bbZ_h^d, |x|\leq R}\left|V_n(t,x)-V(t,x)\right|^2 \leq
\frac{h\tau}{2^{n+1}} \sum_{t\leq s\in \bbN_{T}^\tau}\sum_{ x\in \bbZ^d_h}  \\
&\qquad\qquad  \exp\left[{C\exp(1+\|\nabla^h V\|^2_\infty)(s-t)/h}\right] \left|D^h(V_0(s,x)-V(s,x))\right|^2\min\left\{1,e^{-|x|+R+1}\right\}.
\end{aligned}
\]
In particular, we have
\[
\max_{t\in\bbN_T^\tau}\sum_{x\in\bbZ_h^d, |x|\leq R}\left|V_n(t,x)-V(t,x)\right|^2 \leq C2^{-n}\exp\left[C\exp(CT)/h\right]R^d,
\]
\[
\max_{t\in\bbN_T^\tau}\sum_{ x\in \bbZ^d_h,|x|\leq R}\left|\alpha(t,x,\nabla^h V_n(t,x))- \alpha(t,x,\nabla^h V(t,x))\right|^2\leq C2^{-n}\exp\left[C\exp(CT)/h\right]R^d.
\]
\end{theorem}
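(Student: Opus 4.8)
The plan is to transpose the proof of Theorem~\ref{T.2.4} to the discrete space-time setting — replacing $\int_t^T ds$ by $\tau\sum_{t<s\in\bbN_T^\tau}$ and $\partial_t$ by $\partial_t^\tau$ — and then to absorb the extra error produced by the time discretization. Write $w_n:=V_n-V$, fix $R\ge1$, let $\varphi=\varphi_R$ be the cutoff from \eqref{p.8}, and for a parameter $A\ge1$ to be chosen set
\[
E_{t,n}:=\frac12\, e^{At}\sum_{x\in\bbZ^d_h}|w_n(t,x)|^2\varphi(|x|),\qquad \Gamma_{t,n}:=e^{-At}E_{t,n},
\]
which are finite since $w_n$ is bounded (Proposition~\ref{P.4.1}) and $\varphi$ decays exponentially. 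Because $V_n(T,\cdot)=V(T,\cdot)=q$ we have $E_{T,n}=0$, so telescoping in time gives $-E_{t,n}=\sum_{t<s\le T}(E_{s,n}-E_{s-\tau,n})$, and the task is to bound each increment from below.

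For the increment I would use the algebraic identity $a^2-b^2=2\tau(\partial_t^\tau w)a-\tau^2(\partial_t^\tau w)^2$ with $a=w_n(s,x)$, $b=w_n(s-\tau,x)$, together with the splitting $E_{s,n}-E_{s-\tau,n}=e^{As}(\Gamma_{s,n}-\Gamma_{s-\tau,n})+(e^{A\tau}-1)E_{s-\tau,n}$, to obtain
\[
\Gamma_{s,n}-\Gamma_{s-\tau,n}=\tau X_{s,n}-\frac{\tau^2}{2}\sum_{x\in\bbZ^d_h}|\partial_t^\tau w_n(s,x)|^2\varphi(|x|),\qquad X_{s,n}:=\sum_{x}w_n(s,x)\,\partial_t^\tau w_n(s,x)\,\varphi(|x|).
\]
Substituting the equations \eqref{4.1} and \eqref{4.2} for $\partial_t^\tau V_n$ and $\partial_t^\tau V$ on $\Omega_T'$, the functional $X_{s,n}$ becomes, at the frozen time $s$, precisely the spatial expression handled after \eqref{p.3} in the proof of Theorem~\ref{T.2.4}. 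Hence the discrete summation by parts in space, the bounds $|D^{-h}\varphi|\le C\varphi$ and \eqref{p.13}, the policy-improvement identity \eqref{4.1'}, and Young's inequality carry over verbatim (none of them involved the time variable), yielding for $M:=1+\|\nabla^h V\|_\infty$ and every $\sigma_1,\sigma_2>0$
\[
X_{s,n}\ \ge\ (Nh-\sigma_1)G_{s,n}^{h}-\sigma_2 G_{s,n-1}^{h}-C(\sigma_1^{-1}+M^2\sigma_2^{-1})\,\Gamma_{s,n},\qquad G_{s,n}^{h}:=\sum_{x}|D^h w_n(s,x)|^2\varphi(|x|).
\]

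The genuinely new quantity is the time-discretization term $-\frac{\tau^2}{2}\sum_x|\partial_t^\tau w_n|^2\varphi$. Writing $\partial_t^\tau w_n=-(c_n-c)-\nabla^h w_n\cdot f_n-\nabla^h V\cdot(f_n-f)-Nh\Delta^h w_n$ from \eqref{4.1}--\eqref{4.2}, using the Lipschitz bounds on $c,f,\alpha$ and $\|\nabla^h V\|_\infty\le M$ as in \eqref{p.10}, and the elementary estimate $|h\Delta^h w_n(x)|\le\sum_i(|D^h_i w_n(x)|+|D^h_i w_n(x-he_i)|)$ combined with \eqref{p.13}, one gets
\[
\frac{\tau^2}{2}\sum_{x}|\partial_t^\tau w_n(s,x)|^2\varphi(|x|)\ \le\ C\tau^2\bigl((1+N^2)G_{s,n}^{h}+M^2 G_{s,n-1}^{h}\bigr).
\]
Now I would sum over $t<s\le T$, use $(e^{A\tau}-1)\sum_s E_{s-\tau,n}\ge A\tau\sum_{t\le s<T}E_{s,n}$ together with $E_{T,n}=0$, and choose $\sigma_1=h/4$, $\sigma_2=h/8$, $A\asymp M^2/h$. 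Since $N$ (hence $1+N^2$) is universal and $N\ge1$, the hypothesis $C(1+\|\nabla^h V\|_\infty^2)\tau\le h$ (with $C$ large) is exactly what keeps the net $G^h_{s,n}$-coefficient $Nh-\sigma_1-C\tau(1+N^2)\ge h/2$, the $G^h_{s,n-1}$-coefficient $\sigma_2+C\tau M^2\le h/4$, and the $\Gamma_{s,n}$-terms dominated by the $A\tau E_{s,n}$-terms. This produces the two-step recursion $\sum_{t<s\le T}e^{As}G_{s,n}^{h}\le\frac12\sum_{t<s\le T}e^{As}G_{s,n-1}^{h}$ for $n\ge1$, hence $\sum_{t<s\le T}e^{As}G_{s,n}^h\le 2^{-n}\sum_{t<s\le T}e^{As}G_{s,0}^h$, and, re-using the same inequality, $E_{t,n}\le\frac{\tau h}{2^{n+1}}\sum_{t<s\le T}e^{As}G_{s,0}^h$. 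Restricting to $|x|\le R$ (where $\varphi\equiv1$), using $\varphi(|x|)\le\min\{1,e^{-|x|+R+1}\}$ and $e^{A(s-t)}=\exp[C(1+\|\nabla^h V\|_\infty^2)(s-t)/h]$, and then invoking Proposition~\ref{P.4.1} ($\|\nabla^h V\|_\infty,\|D^h(V_0-V)\|_\infty\le C\exp(CT)$) and $\sum_{|x|\le R+1}\varphi(|x|)\le CR^d$ gives the stated bounds for $V_n-V$. The policy estimate then follows exactly as in Theorem~\ref{T.2.add}: Lipschitz continuity of $\alpha$ and the definition of $\nabla^h$ give $|\alpha(t,x,\nabla^h V_n)-\alpha(t,x,\nabla^h V)|^2\le Ch^{-2}\sum_i|w_n(t,x+he_i)-w_n(t,x-he_i)|^2$, and summing over $|x|\le R$ (so over $|y|\le R+1$) and applying the $V_n-V$ bound on the slightly larger ball absorbs the extra $h^{-2}$ into $\exp[C\exp(CT)/h]$.

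The main obstacle is the time-discretization term $-\frac{\tau^2}{2}\sum_x|\partial_t^\tau w_n|^2\varphi$: its $N^2h^2|\Delta^h w_n|^2$ contribution can only be controlled by the dissipative term $\tau Nh\,G^h_{s,n}$, which carries one fewer power of $\tau$, so absorbing it is precisely what forces the parabolic-CFL hypothesis $C(1+\|\nabla^h V\|_\infty^2)\tau\le h$; apart from this term and the index-shift bookkeeping in the discrete telescoping, the argument mirrors the proof of Theorem~\ref{T.2.4}.
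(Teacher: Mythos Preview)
Your proposal is correct and follows essentially the same approach as the paper. Both arguments start from the discrete energy $E_{t,n}=\tfrac12 e^{At}\sum_x|w_n|^2\varphi$, use the identity $|w_n(s)|^2-|w_n(s-\tau)|^2=2\tau w_n(s)\partial_t^\tau w_n-\tau^2|\partial_t^\tau w_n|^2$ to split the increment into an $X$-term (handled exactly as in Theorem~\ref{T.2.4}) and a new $Y$-term $\sum_x|\partial_t^\tau w_n|^2\varphi$, bound $Y$ by $C(G^h_{s,n}+M^2G^h_{s,n-1})$ via the equations and $|h\Delta^h w_n|\le C|D^h w_n|$, and then choose $\sigma_1=h/4$, $\sigma_2=h/8$, $A\asymp M^2/h$ together with the CFL hypothesis $CM^2\tau\le h$ to close the recursion; the only cosmetic difference is that you factor through $\Gamma_{t,n}=e^{-At}E_{t,n}$ while the paper keeps the exponential weight inside and writes the increment as $\tau^{-1}(E_{t,n}-E_{t-\tau,n})\ge Ae^{-A\tau}E_{t,n}+e^{A(t-\tau)}X_{t,n}-\tfrac{\tau}{2}e^{A(t-\tau)}Y_{t,n}$.
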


\begin{proof}
Assume $T\geq 1$ for simplicity. Let $\varphi=\varphi_R:[0,\infty)\to [0,1]$ be $C^1$ and satisfying \eqref{p.8}, and let $A:=CT^2/h$ for some $C>0$ to be determined. Then for $t\in \bbN_{T}^\tau$ set 
\[
E_{t,n}:=\frac12 e^{At}\sum_{x\in \bbZ^d_h}|V_n(t,x)-V(t,x)|^2\varphi(|x|)
\]
which is finite. Direct computation yields
\beq\lb{4.4}
\begin{aligned}
\frac{E_{t,n}-E_{t-\tau ,n}}\tau&\geq Ae^{-A\tau}E_{t,n}+\frac{1}{2}e^{A(t-\tau )}\sum_{x\in  \bbZ^d_h}(V_n(t,x)+V_n(t-\tau ,x)-V(t,x)-V(t-\tau ,x))\times\\
&\qquad \qquad (\partial_t^\tau V_n(t,x)-\partial_t^\tau V(t,x))\varphi(|x|)\\
&= Ae^{-A\tau}E_{t,n}+e^{A(t-\tau )}\sum_{x\in  \bbZ^d_h}(V_n(t,x)-V(t,x))(\partial_t^\tau V_n(t,x)-\partial_t^\tau V(t,x))\varphi(|x|)\\
&\qquad \qquad -\frac{\tau}2 e^{A(t-\tau )}\sum_{x\in \bbZ^d_h}\left|\partial_t^\tau V_n(t,x)-\partial_t^\tau V(t,x)\right|^2\varphi(|x|)\\
&=:Ae^{-A\tau}E_{t,n}+e^{A(t-\tau )}X_{t,n}-\frac{\tau}2 e^{A(t-\tau )}Y_{t,n}.
\end{aligned}
\eeq

\quad First, we consider the term $Y_{t,n}$ (which does not appear in the semi-discretization problem in Theorem \ref{T.2.4}). Similarly as before, for simplicity of notations, we write 
\[
\alpha:=\alpha(t,x,\nabla^h V(t,x)),\quad \alpha_n:=\alpha(t,x,\nabla^h V_{n-1}(t,x)),
\]
\[
 c_n:=c(t,x,\alpha_n(t,x)) \quad \text{and}\quad f_n:=f(t,x,\alpha_n(t,x))).
\]
We will also drop $(t,x)$ from the notations of $V(t,x),V_n(t,x)$, and $(|x|)$ from $\varphi(|x|)$.
It follows from the equations \eqref{4.1} and \eqref{4.2} that
\begin{align*}
    Y_{t,n}&=\sum_{x\in \bbZ^d_h}\left|c_n+\nabla^h V_n\cdot f_n+Nh\Delta^h V_n-H(t,x,\nabla^h V)-Nh\Delta^h V\right|^2\varphi(|x|)
\end{align*}
Recall that 
$H(t,x,\nabla^h V)=c(t,x,\alpha)+f(t,x,\alpha)\cdot \nabla^h V$,
and $|\nabla^h V|\leq M$ for some $M\geq 1$. 
So, the regularity assumptions and \eqref{p.6} yield
\begin{align*}
Y_{t,n}&\leq C \sum_{x\in \bbZ^d_h}\left(M^2|D^h V_{n-1}-D^h V|^2+M^2|D^{-h} V_{n-1}-D^{-h} V|^2
+\right.\\
&\qquad\qquad \left.|D^h V_{n}-D^h V|^2+|D^{-h} V_{n}-D^{-h} V|^2\right)\varphi(|x|) \le C \left(M^2G_{t,n-1}^h+ G_{t,n}^h\right),
\end{align*}
where in the last inequality we used the notation
$G_{t,n}^h:=\sum_{x\in \bbZ^d_h}|D^hV_n(t,x)-D^hV(t,x)|^2\varphi(|x|)$.
and
\eqref{p.13} with the above defined $G_{t,n}^h$ (which clearly holds the same).

\quad Next, we consider the term $X_{t,n}$. 
Note that for any $v\in L^\infty(\bbZ_h^d)$,
\[
\sum_{x\in \bbZ^d_h}\Delta^h v\, v\varphi =-\sum_{x\in \bbZ^d_h}|D^h v|^2\varphi-\sum_{x\in \bbZ^d_h} vD^{-h} v\cdot D^{-h}\varphi.
\]
So, similarly as before (also using
the equation, \eqref{p.8}, \eqref{p.13}, the uniform Lipschitz assumptions, and Young's inequality), we have for some universal $C>0$ and for any $\sigma_1,\sigma_2>0$,
\begin{align*}
    X_{t,n} &\geq  Nh G_{t,n}^h-C\sum_{x\in \bbZ^d_h} |V_n-V|(|D^h(V_{n}-V)|+|D^{-h}(V_{n}-V)|)\varphi\\
&\qquad\qquad-CM\sum_{x\in \bbZ^d_h} |V_n-V|(|D^h(V_{n-1}-V)|+|D^{-h}(V_{n-1}-V)|)\varphi\\
&\geq  (Nh-\sigma_1)G_{t,n}^h-\sigma_2G_{t,n-1}^h-C(\sigma_1^{-1}+M^2\sigma_2^{-1})e^{-At}E_{t,n}.
\end{align*}

\quad Since $E_{T,n}\equiv 0$, putting the above together and summing up \eqref{4.4} with respect to $t$ yield
\beq\lb{p.5'}
\begin{aligned}
-E_{t,n}/\tau &\geq ( Nh-\sigma_1-C\tau)\sum_{t+\tau\leq s\in \bbN_{T}^\tau} e^{A(s-\tau)}G_{s,n}^h-(\sigma_2+CM^2\tau)\sum_{t+\tau\leq s\in \bbN_{T}^\tau} e^{A(s-\tau)}G_{s,n-1}^h\\
&\qquad +(A-C\sigma_1^{-1}-CM^2\sigma_2^{-1})e^{-A\tau}\sum_{t+\tau\leq s\in \bbN_{T}^\tau} E_{s,n}^h
\end{aligned}
\eeq
for some universal constant $C>0$.

\quad Finally we take 
$\sigma_1:=h/4$, $\sigma_2:=h/8$, $A:=12CM^2/h$.
Then if $\tau\leq h/(8CM^2)$,
\eqref{p.5'} yields
\[
\sum_{t+\tau\leq s\in \bbN_{T}^\tau}e^{A(s-\tau)} G_{s,n}^h\leq \frac12\sum_{t+\tau\leq s\in \bbN_{T}^\tau} e^{A(s-\tau)}G_{s,n-1}^h\leq\ldots\leq 2^{-n}\sum_{t+\tau\leq s\in \bbN_{T}^\tau}e^{A(s-\tau)} G_{s,0}^h,
\]
and then
$
E_{t,n}\leq \frac{  h\tau}{4}\sum_{t+\tau\leq s\in \bbN_{T}^\tau}e^{A(s-\tau)}G_{s,n-1}^h\leq  \frac{h\tau}{2^{n+1}}\sum_{t+\tau\leq s\in \bbN_{T}^\tau}e^{A(s-\tau)}G_{s,0}^h$.
This, together with Proposition \ref{P.4.1}, concludes the proof of the first claim as before. 

\quad The second claim follows similarly as in Theorem \ref{T.2.add}.
\end{proof}

\quad By shifting the solutions, we obtain uniform pointwise exponential convergence of $V^{\tau,h}_n$ to $V^{\tau,h}$ and $\alpha(\cdot,\cdot,\nabla^h V^{\tau,h}_n(\cdot,\cdot))$ to $\alpha(\cdot,\cdot,\nabla^h V^{\tau,h}(\cdot,\cdot))$ as $n\to \infty$, in $\Omega_T^{\tau,h}$.


\subsection{Convergence of $V^{\tau,h}$ as $\tau, h \to 0$.}
Let $V^{\tau,h}$ and $v$ be, respectively, solutions to \eqref{4.2} and \eqref{3.1}. The following theorem proves that the difference between $V^{\tau,h}$ and $v$ is at most of order $\sqrt{h}$. The argument follows the idea of \cite[Theorem 1]{crandall1984two}, which considered the discrete space-time scheme for the homogeneous Hamilton-Jacobi equation $v_t+H(Dv)=0$. 

\begin{theorem}\lb{T.4.3}
Assume  {\rm (A1)--(A2)} and \eqref{N}. Then there exists a universal $C>0$ such that
\[
\sup_{(t,x)\in \Omega^{\tau,h}_T} |v(t,x)-V^{\tau,h}(t,x)|\leq C(1+T)(1+\|\nabla v\|_\infty) \sqrt{h}.
\]
In particular, we have
$\sup_{(t,x)\in \Omega^{\tau,h}_T} |v(t,x)-V^{\tau,h}(t,x)|\leq C\exp(CT) \sqrt{h}$.
\end{theorem}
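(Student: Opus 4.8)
The plan is to adapt the doubling-of-variables argument used in the proof of Theorem~\ref{T.2.5} to the discrete space-time setting, following \cite{crandall1984two}. Suppose, for contradiction, that $8\sigma := v(t_0,x_0)-V^{\tau,h}(t_0,x_0) \geq \tfrac12\sup_{\Omega^{\tau,h}_T}(v-V^{\tau,h})>0$ at some point $(t_0,x_0)\in\Omega^{\tau,h}_T$, and introduce the auxiliary function
\[
\Phi^{\tau,h}(t,s,x,y):=v(t,x)-V^{\tau,h}(s,y)-\tfrac{\sigma}{T}(2T-t-s)-\tfrac{\sigma}{R}(\phi(x)+\phi(y))+(8L+2\sigma)g_\eps(t-s,x-y),
\]
where $t\in[0,T]$ varies continuously, $s\in\bbN_T^\tau$ and $y\in\bbZ_h^d$ are discrete, and $\phi,g_\eps,L$ are exactly as in the proof of Theorem~\ref{T.2.5}. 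As before, one locates a maximizer $(t_1,s_1,x_1,y_1)$, uses \eqref{3.10}-type lower bounds to force $g_\eps(t_1-s_1,x_1-y_1)\geq 3/4$ so that the quadratic formula \eqref{3.4} for $g_\eps$ is valid near the maximizer, and extracts the closeness estimates $|t_1-s_1|,|x_1-y_1|\leq C\eps^2 M L^{-1}$ from the fact that the gradients of $g_\eps$ are controlled by the Lipschitz bounds on $v$ and $V^{\tau,h}$ from Proposition~\ref{P.4.1}.

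The crux is then to compare the equations. Since $v$ solves the continuous HJB equation \eqref{3.1} in the viscosity sense and $(t,x)\mapsto v(t,x)+\tfrac{\sigma}{T}t-\tfrac{\sigma}{R}\phi(x)+(8L+2\sigma)g_\eps(t-s_1,x-y_1)$ is maximized at $(t_1,x_1)$ (assuming $t_1<T$; the boundary case $t_1=T$ is handled separately exactly as in Theorem~\ref{T.2.5} using the near-$T$ unit-Lipschitz bound), the standard viscosity test gives an inequality of the form \eqref{4.8}. On the discrete side, one uses that $V^{\tau,h}$ satisfies the scheme \eqref{4.2}, i.e. $V^{\tau,h}(s_1-\tau,y_1)=\calF_{s_1}(V^{\tau,h}(s_1,\cdot))(y_1)$, together with the fact that $(s,y)\mapsto V^{\tau,h}(s,y)-\tfrac{\sigma}{T}s+\tfrac{\sigma}{R}\phi(y)-(8L+2\sigma)g_\eps(t_1-s,x_1-y)$ is minimized over $\Omega^{\tau,h}_T$ at $(s_1,y_1)$. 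The monotonicity of $\calF_{s_1}$ (guaranteed by \eqref{N}) lets me substitute the test function into the scheme: replacing $V^{\tau,h}(s_1,\cdot)$ by the test function $\psi(y):=\tfrac{\sigma}{T}s_1-\tfrac{\sigma}{R}\phi(y)+(8L+2\sigma)g_\eps(t_1-s_1,x_1-y)+\text{const}$, which touches $V^{\tau,h}$ from below at $y_1$, yields $V^{\tau,h}(s_1-\tau,y_1)\geq \calF_{s_1}(\psi)(y_1)$; combined with $V^{\tau,h}(s_1-\tau,y_1)-V^{\tau,h}(s_1,y_1)=\psi(y_1)-V^{\tau,h}(s_1,y_1)$ evaluated via the minimum property, this produces the discrete analogue of the second inequality in the proof of Theorem~\ref{T.2.5}, now carrying both the finite-difference Laplacian term $Nh\tau\Delta^h[\cdot]$ and the genuinely discrete time-difference. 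Subtracting the two inequalities gives
\[
\tfrac{2\sigma}{T}\leq \big[\text{$H$-difference at nearby points}\big]+Nh\,\Delta^h\big[\tfrac{\sigma}{R}\phi(y_1)-(8L+2\sigma)g_\eps\big]+\big[\text{time-discretization error}\big].
\]

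The main obstacle — and the place where the argument genuinely differs from Theorem~\ref{T.2.5} — is controlling the extra time-discretization error term that arises because $\partial_t^\tau$ is not $\partial_t$. One must bound the discrepancy between $\tfrac{1}{\tau}\big(g_\eps(t_1-s_1,\cdot)-g_\eps(t_1-(s_1-\tau),\cdot)\big)$ and $-\partial_t g_\eps(t_1-s_1,\cdot)$, i.e. a second-order-in-$\tau$ Taylor remainder of $g_\eps$; since $g_\eps$ has second derivatives of size $\eps^{-2}$ on the region where the quadratic formula holds, this error is of order $\tau\eps^{-2}L$, the same order as the viscosity term $Nh\eps^{-2}L$ after using $N\tau\leq h$ from \eqref{N}. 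Hence, just as in Theorem~\ref{T.2.5}, one arrives at $\tfrac{\sigma}{T}\leq C\sigma R^{-1}+CL\eps^{-2}h+C\eps^2M^2L^{-1}$ (the $\tau$-error being absorbed into the $h$-term), and optimizing with $\eps:=M^{-1/2}L^{1/2}h^{1/4}$ and sending $R\to\infty$ gives $\sigma\leq CTM\sqrt h$. The symmetric bound on $\sup_{\Omega^{\tau,h}_T}(V^{\tau,h}-v)$ follows by the same reasoning with the roles of $v$ and $V^{\tau,h}$ interchanged (doubling only in the continuous variable for the supersolution side), and invoking Proposition~\ref{P.4.1} and the bound $\|\nabla v\|_\infty\leq C\exp(CT)$ yields the stated $C\exp(CT)\sqrt h$ form. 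A subtle point to watch is that the quadratic identity \eqref{3.4} and the finite-difference identities \eqref{3.6'} require $h\ll\eps$, which with the chosen $\eps$ is automatic for $h$ small, and that $\phi$ must be handled with $\nabla^h,\Delta^h$ rather than $\nabla,\Delta$ — but $|\nabla^h\phi|\leq C$ and $\Delta^h\phi\leq C$ hold uniformly, so these cause no trouble.
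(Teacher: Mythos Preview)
Your overall strategy matches the paper's exactly, including the key new ingredient: the bound $|\partial_t^\tau g_\eps-\partial_t g_\eps|\leq C\eps^{-2}\tau$ and its absorption into the $h$-term via $\tau\leq h/(2N)$ from \eqref{N}. However, your extraction of the discrete supersolution inequality is not correct as written. Substituting the test function $\psi$ (which touches $V^{\tau,h}(s_1,\cdot)$ from below at $y_1$) into $\calF_{s_1}$ only yields the \emph{lower} bound $V^{\tau,h}(s_1-\tau,y_1)\geq\calF_{s_1}(\psi)(y_1)$; the identity ``$V^{\tau,h}(s_1-\tau,y_1)-V^{\tau,h}(s_1,y_1)=\psi(y_1)-V^{\tau,h}(s_1,y_1)$'' you invoke is simply false, and the minimum property at $(s_1-\tau,y_1)$ gives yet another lower bound on $V^{\tau,h}(s_1-\tau,y_1)$, so the two inequalities point the same way and cannot be combined to produce the needed supersolution test.

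The paper steps \emph{forward} in the discrete time variable: from the minimum property one has $V^{\tau,h}(s_1+\tau,\cdot)\geq\tilde V(s_1+\tau,\cdot)$, and monotonicity of $\calF_{s_1+\tau}$ then gives
\[
V^{\tau,h}(s_1,y_1)=\calF_{s_1+\tau}\bigl(V^{\tau,h}(s_1+\tau,\cdot)\bigr)(y_1)\geq\calF_{s_1+\tau}\bigl(\tilde V(s_1+\tau,\cdot)\bigr)(y_1).
\]
Since $\tilde V(s_1,y_1)=V^{\tau,h}(s_1,y_1)$, this directly yields \eqref{4.10}, the discrete supersolution test for the \emph{test function} $\tilde V$, with the spatial finite differences evaluated at $t_1-s_1-\tau$ and the Hamiltonian at time $s_1+\tau$. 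This time shift is why the paper records \eqref{4.9} (a bound on $|t_1-s_1-\tau|$) rather than just \eqref{3.9}, and why $\nabla_x^h g_\eps(t_1-s_1-\tau,\cdot)$ appears in \eqref{4.12}. Once you correct the direction of the time step, the remainder of your sketch is accurate.
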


\begin{remark}\lb{R.4}
It was shown in \cite{10DonKry,DonKry,18Kry} that 
\[
\sup_{(t,x)\in \Omega^{\tau,h}_T} |v(t,x)-V^{\tau,h}(t,x)|\leq C(\tau^{1/4}+h^{1/2})\quad\text{ for some $C=C(T)>0$},
\]
where $v$ solves a general degenerate parabolic Bellman equation and $V^{\tau,h}$ is its space-time finite difference approximation.  
For the first order equations, our Theorem \ref{T.4.3} obtains a better convergence rate of $C(\tau^{1/2}+h^{1/2})$.
\end{remark}

\begin{proof}
Assume $T\geq 1$. And suppose for some $(t_0,x_0)\in \Omega^{\tau,h}_T$ such that
\beq\lb{4.5}
8\sigma:=v(t_0,x_0)-V^{\tau,h}(t_0,x_0)\geq \frac12 \sup_{(t,x)\in \Omega^{\tau,h}_T}\left[ v(t,x)-V^{\tau,h}(t,x)\right]>0.
\eeq
Let $D_{T,\tau,h}:=[0,T]\times \bbN_{T}^\tau\times \bbR^d\times \bbZ^d_h$, and
\[
L:=\sup\left\{v(t,x),-V^{\tau,h}(t,x)\,:\,(t,x)\in \Omega^{\tau,h}_T\right\}+1.
\]
Then $\sigma\leq L\leq CT$ for some universal constant $C>0$. Moreover, let $R$, $g$ and $g_\eps$ with $\eps\in (0,1)$, and $\phi$ be from the proof of Theorem \ref{T.2.5}, and define $\Phi^h: D_{T,\tau,h}\to \bbR$ by
\begin{align*}
\Phi^h(t,s,x,y)&:=v(t,x)-V^{\tau,h}(s,y)-\frac{\sigma}{T}(2T-t-s)\\
&\qquad\quad-\frac{\sigma}{R}(\phi(x)+\phi(y))+(8L+2\sigma)g_\eps(t-s,x-y).
\end{align*}
Suppose
\beq\lb{4.6}
\Phi^h(t_1,s_1,x_1,y_1)=\max_{D_{T,\tau,h}}\Phi^h(t,s,x,y).
\eeq
It is clear that \eqref{3.10}--\eqref{3.9} hold the same. By \eqref{3.9} if $\tau \ll \eps^2M/L$, we get
\beq\lb{4.9}
|t_1-s_1-\tau|\leq C\eps^2M/L\quad\text{ with }M=1+\|\nabla v\|_\infty.
\eeq

\quad First, assume $t_1,s_1<T$. The viscosity solution test for $v$ shows \eqref{4.8} by \eqref{3.5}. 
Next since
$\Omega^{\tau,h}_T\ni(s,y)\to V^{\tau,h}(s,y)-\frac{\sigma}{T}s+\frac{\sigma}{R}\phi(y)-(8L+2\sigma)g_\eps(t_1-s,x_1-y)$
is minimized at $(s_1,y_1)$, then for all $(s,y)\in \Omega^{\tau,h}_T$,
\begin{align*}
    V^{\tau,h}(s,y)&\geq V^{\tau,h}(s_1,y_1)-\frac{\sigma}{T}(s_1-s)+\frac{\sigma}{R}(\phi(y_1)-\phi(y))\\
    &\qquad\qquad-(8L+2\sigma)\left[g_\eps(t_1-s_1,x_1-y_1)-g_\eps(t_1-s,x_1-y)\right]=:\tilde V(s,y).
\end{align*}
Recall that $s_1+\tau\leq T$ and $\calF_t$ from \eqref{calF} satisfies the monotonicity property. We obtain
\[
V^{\tau,h}(s_1,y_1)=\calF_{s_1+\tau}(V^{\tau,h}(s_1+\tau,\cdot))(y_1)\geq\calF_{s_1+\tau}(\tilde{V}(s_1+\tau,\cdot))(y_1),
\]
which gives
\beq\lb{4.10}
\begin{aligned}
&0\geq \frac{\sigma}{T}-(8L+2\sigma)\,\partial^\tau_t g_\eps(t_1-s_1,x_1-y_1)\\
&\qquad\qquad+H\left(s_1+\tau,y_1,-\frac\sigma{R}\nabla^h\phi(y_1)-(8L+2\sigma)\nabla_x^h\, g_\eps(t_1-s_1-\tau,x_1-y_1)\right)\\
&\qquad\qquad - Nh\Delta^h \left[\frac{\sigma}{R}\phi(y_1)-(8L+2\sigma)g_\eps(t_1-s_1-\tau,x_1-y_1))\right].
\end{aligned}
\eeq

\quad By \eqref{3.4}, if $\tau,h \ll \eps^2$,
\beq\lb{4.11}
|\partial_t^\tau \,g_\eps(t_1-s_1,x_1-y_1)-\partial_t g_\eps(t_1-s_1,x_1-y_1)|\leq C\eps^{-2}\tau,
\eeq
\beq\lb{4.12}
\nabla_x^h \,g_\eps(t_1-s_1-\tau,x_1-y_1)=\nabla_x\, g_\eps(t_1-s_1,x_1-y_1)=2\eps^{-2}(x_1-y_1).
\eeq
Combining \eqref{4.10} with \eqref{4.8}, and using \eqref{4.11} and \eqref{4.12} yield
\beq\lb{4.32}
\begin{aligned}
\frac{2\sigma}{T}&\leq H\left(t_1,x_1,\frac\sigma{R}\nabla\phi(x_1)-(8L+2\sigma)2\eps^{-2}(x_1-y_1)\right)\\
&\qquad\quad-H\left(s_1+\tau,y_1,-\frac\sigma{R}\nabla\phi(y_1)-(8L+2\sigma)2\eps^{-2}(x_1-y_1)\right)\\
&\qquad\quad + Nh\Delta^h \left[\frac{\sigma}{R}\phi(y_1)-(8L+2\sigma)g_\eps(t_1-s_1-\tau,x_1-y_1))\right]+CL\eps^{-2}\tau.
\end{aligned}
\eeq
The definitions of $\phi$ and $g_\eps$ show \eqref{3.6}.
Then, applying \eqref{2.5} and \eqref{3.6} into \eqref{4.32}, if $(\tau\leq)\, h\ll \eps^{2}$ we deduce for some $C>0$ that
\beq\lb{4.36}
\begin{aligned}
\sigma{T^{-1}}&\leq  C\sigma R^{-1}(|\nabla\phi(x_1)|+|\nabla\phi(y_1)|)+CL\eps^{-2}h+CL\eps^{-2}\tau\\
&\qquad +C(|t_1-s_1-\tau|+|x_1-y_1|)\left[1+(8L+2\sigma)2\eps^{-2}|x_1-y_1|\right]\\
&\leq C\sigma R^{-1}+CL\eps^{-2}h+C\eps^2M^2 L^{-1}
\end{aligned}
\eeq
where in the second inequality we also used \eqref{3.8} and \eqref{4.9}.

\quad Now we take $\eps:=M^{-1/2}L^{1/2}h^{1/4}$, and send $R\to\infty$. 
It is clear that $\tau\ll \eps^2M/L$ is satisfied when $h$ is small.
We obtain from \eqref{4.36} that $\sigma\leq CTM\sqrt{h}$, which finishes the proof of the upper bound of $\sup_{ \Omega^{\tau,h}_{T} }(v-V^{\tau,h})$ in the case when $t_1,s_1<T$. 

\quad Next, if at least one of $t_1$ and $s_1$ equals to $T$, the argument of Theorem \ref{T.2.5} applies the same except that we need to use Proposition \ref{P.4.1} in place of Lemma \ref{L.2.3}.
Finally, the proof for the upper bound of $ \sup_{\Omega^{\tau,h}_{T}} (V^{\tau,h}-v)$ is the same.
\end{proof}

\subsection{Almost everywhere convergence of the policy}
We show the almost everywhere convergence of the policy and some semi-concavity property of the solution.

\begin{theorem}\lb{T.4.10}
Under the assumptions of Theorem \ref{T.4.3}, further assume {\rm (A3)}, $q\in C^4$, $H\in C^2$, and  when $|p|\leq \sup_{\tau,h}\|\nabla^hV^{\tau,h}\|_{L^{\infty}(\Omega_T^{\tau,h})}$, $|D_pH|\leq N$ and $-D_{pp}H\geq \theta I$ for some $\theta>0$. Then $V^{\tau,h}$ and $v$ are uniformly semi-concave for all $t\in [0,T]$. 
Moreover, for each $t\in[0,T]$ we have for a.e. $x\in\bbR^d$,
\[
\alpha(t_{h},x_{h},\nabla^h V^{\tau_h,h}(t_{h},x_{h} ))\to \alpha(t,x,\nabla v(t,x))\quad\text{ as }h\to 0
\]
where $\Omega_T^{\tau_h,h}\ni(t_{h},x_{h})\to (t,x)$ as $h\to 0$ and $\tau_h$ satisfies $0<2N\tau_h\leq h$. 
\end{theorem}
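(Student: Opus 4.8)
The plan is to prove the two assertions — uniform semi-concavity of $v(t,\cdot)$ and the almost-everywhere convergence of the discrete policy — in sequence, since the second relies essentially on the first. For the semi-concavity of $v$, I would first note that this is established in \cite{vis} under assumptions like (A3) (the quantity $q$ being semi-concave and $c,f$ being in $W^{2,\infty}$ in $x$); the statement asserts exactly this, so I would either cite it directly or reproduce the short doubling-variables / second-difference argument: fix $t$ and $y\in\bbR^d$, consider the sup over $x$ of $v(t,x+y)+v(t,x-y)-2v(t,x)$, and propagate the bound backward from $t=T$ (where it is controlled by the semi-concavity of $q$) using the equation \eqref{3.1} and the $W^{2,\infty}$ bounds on $c(t,\cdot,a)$ and $f(t,\cdot,a)$ uniformly in $a$. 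The outcome is a constant $C\exp(CT)$ such that $v(t,x+y)+v(t,x-y)-2v(t,x)\leq C\exp(CT)|y|^2$ for all $t,x,y$.

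Next I would invoke the companion result (the unnamed theorem just after Theorem \ref{T.4.11} in the discrete space-time setting, whose semi-discrete analogue appears right after Theorem \ref{T.2.8}) giving the \emph{weak semi-concavity} of $V^{\tau,h}$: namely $V^{\tau,h}(t,x+y)+V^{\tau,h}(t,x-y)-2V^{\tau,h}(t,x)\leq C\exp(CT)(|y|^2+\sqrt h)$ on $\Omega_T^{\tau_h,h}$. Combining this with Theorem \ref{T.4.3} (which gives $\sup|v-V^{\tau,h}|\leq C\exp(CT)\sqrt h$) and the uniform Lipschitz bounds from Proposition \ref{P.4.1}, I would argue that $\nabla^h V^{\tau,h}(t_h,x_h)$ converges to $\nabla v(t,x)$ at every point $x$ where $v(t,\cdot)$ is differentiable. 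The mechanism: at a differentiability point, semi-concavity plus the $o(|y|)$ bound from differentiability forces the incremental quotients $D^{\pm h}V^{\tau,h}$ to be squeezed from above by the semi-concavity inequality and from below by uniform Lipschitz continuity, and the $\sqrt h$ error terms (both from $|v-V^{\tau,h}|$ and from the weak-semiconcavity defect) vanish as $h\to0$; since $\nabla^h = \frac12(D^h - D^{-h})$ by \eqref{p.6}, this pins down $\nabla^h V^{\tau,h}(t_h,x_h)\to\nabla v(t,x)$. Because $v(t,\cdot)$ is Lipschitz and semi-concave, hence in $W^{1,\infty}$ and differentiable a.e., this convergence holds for a.e. $x$. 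Finally, continuity of $\alpha(\cdot,\cdot,\cdot)$ (assumption (A2)) transfers this to $\alpha(t_h,x_h,\nabla^h V^{\tau_h,h}(t_h,x_h))\to\alpha(t,x,\nabla v(t,x))$.

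The main obstacle I anticipate is the squeezing argument that upgrades ``$\sup|v-V^{\tau,h}|\leq C\sqrt h$ plus one-sided second-difference bounds'' into genuine convergence of the discrete gradients at differentiability points: one must choose the comparison scale for $|y|$ carefully (e.g. $|y|\sim h^{1/4}$ or a scale tied to the differentiability modulus at $x$) so that the $|y|^2$ term, the $\sqrt h$ defect, and the $h$-scale of the finite difference operator are all simultaneously negligible, and one must handle the fact that the evaluation points $(t_h,x_h)$ move with $h$ (so the semi-concavity and Lipschitz constants must be uniform, which Proposition \ref{P.4.1} and the semi-concavity estimates provide, but the bookkeeping is delicate). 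A secondary technical point is that $\nabla^h$ is a centered difference, so controlling it requires both $D^h$ and $D^{-h}$; the symmetry in \eqref{p.6} together with the two-sided use of semi-concavity at $x+y$ and $x-y$ is what makes this work, and I would want to state that step explicitly rather than leave it implicit.
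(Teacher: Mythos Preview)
Your approach differs substantially from the paper's. The paper's proof of the second assertion is short and does \emph{not} invoke the weak semi-concavity of $V^{\tau,h}$ (Theorem \ref{T.4.11}) at all. Instead it runs a compactness--plus--stability argument: the uniform Lipschitz bound on $V^{\tau_h,h}$ (Proposition \ref{P.4.1}) makes $\{\nabla^h V^{\tau_h,h}(t_h,x_h)\}_h$ bounded, hence precompact; for any subsequential limit $p$, the uniform convergence $V^{\tau_h,h}\to v$ together with the semi-concavity of $v$ (which guarantees $D^+v(t,x)\neq\emptyset$) and stability of the superdifferential force $p\in D^+v(t,x)$; at a differentiability point of $v(t,\cdot)$ one has $D^+v(t,x)=\{\nabla v(t,x)\}$, so every subsequential limit coincides with $\nabla v(t,x)$, and the full sequence converges. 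Lipschitz continuity of $\alpha$ then finishes.

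Your squeezing route, by contrast, runs into a genuine obstacle that you flag but do not resolve. Both the uniform error $|V^{\tau,h}-v|\leq C\sqrt h$ from Theorem \ref{T.4.3} and the semi-concavity defect $\sqrt h$ in Theorem \ref{T.4.11} are of order $\sqrt h$, while the discrete gradient $\nabla^h$ divides by $h$; the naive squeeze therefore produces an error of order $h^{-1/2}$, which blows up. Your suggested intermediate scale $|y|\sim h^{1/4}$ would control a difference quotient at scale $h^{1/4}$, but gives no direct handle on the scale-$h$ object $\nabla^h V^{\tau_h,h}(t_h,x_h)$ that actually appears in the statement, and weak semi-concavity with defect $\sqrt h$ is too coarse to bridge the two scales. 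The paper's argument sidesteps this entirely by working with the superdifferential of the limit $v$ rather than estimating finite differences of $V^{\tau,h}$ directly; this is what buys the short proof and removes any dependence on Theorem \ref{T.4.11}.
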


\begin{proof}

The semi-concavity of $v(t,\cdot)$ follows from \cite{vis}. 

\quad Next, we show semi-concavity of $V^{\tau,h}(t,\cdot)$. By replacing $q(x)$ by $q(x+y)$ for $y\in [0,h]^d$, we can extend $V^{\tau,h}$ to be a function on $\bbN_T^\tau\times\bbR^d$. Differentiating \eqref{4.2} twice in $x_i$ to get
\[
\partial_t^\tau V^{\tau,h}_{ii} + D_pH\cdot\nabla^h V^{\tau,h}_{ii} +Nh\Delta^h V^{\tau,h}_{ii}+H_{ii}+2D_pH_{i}\cdot\nabla^h V^{\tau,h}_{i}+\sum_{k,l}H_{p_kp_l}\nabla_k^hV^{\tau,h}_{i}\nabla_l^hV^{\tau,h}_{i}=  0.
\]
Here we used the notation that for a function $f:\bbR^d\to\bbR$, $\nabla_k^h f:=\frac{f(x+he_k)-f(x-he_k)}{2h}$, $f_i:=\partial_{x_i}f$ and  $f_{ii}:=\partial_{x_i}^2f$. By the assumptions, there exists $C>0$ such that
\[
\sum_{k,l}H_{p_kp_l}\nabla_k^hV^{\tau,h}_{i}\nabla_l^hV^{\tau,h}_{i}\leq -\theta|\nabla^h V^{\tau,h}_{i}|^2,
\]
\[
|H_{ii}|\leq C\quad\text{and}\quad 2|D_pH_{i}\cdot\nabla^h V^{\tau,h}_{i}|\leq \theta|\nabla^h V^{\tau,h}_{i}|^2+C.
\]
Hence, we get
\[
\partial_t^\tau V^{\tau,h}_{ii} + D_pH\cdot\nabla^h V^{\tau,h}_{ii} +Nh\Delta^h V^{\tau,h}_{ii}\geq -C.
\]
Since $D_pH$ is uniformly finite, $q\in C^4$ and $N\geq |D_pH|$,  the comparison principle yields
\[
V^{\tau,h}_{ii}\leq \|q_{ii}\|_\infty+C(T-t).
\]
This implies that $V^{\tau,h}$ is uniformly semi-concave in $x$ for $t\in\bbN_T^\tau$.

\quad For the second claim, it suffices to prove that for a fixed $t\in[0,T)$, and a.e.  $x\in \bbR^d$,
\beq\lb{4.31}
\nabla^h V^{\tau_h,h}(t_{h},x_{h} )\to \nabla v(t,x)\quad\text{ as }h\to 0.
\eeq
For any function $g:\bbR^d\to\bbR$, we denote by $D^{+}g(x)$ the set of subdifferential of $g$:
\[
D^+g(x):=\left\{p\in\bbR^d\,\big|\, \limsup_{y\to x}\frac{g(y)-g(x)-p\cdot (y-x)}{|y-x|}\leq 0\right\}.
\]
Due to $v(t,\cdot)$ is semi-concave, $D^+v(t,x)$ is non-empty for all $x\in\bbR^d$.

\quad Because $v(t,\cdot)$ is Lipschitz continuous, $\nabla_x v(t,x)$ exists for a.e. $x\in \bbR^d$. We fix one such $x$.
Since $ V^{\tau_h,h}$ are Lipschitz continuous uniformly in $h$, after passing to a subsequence of $h\to 0$, we can assume that 
$\nabla^h V^{\tau_h,h}(t_{h},x_{h} )\to p$ 
for some $p\in\bbR^d$. 
Since $V^{\tau_h,h}(t_{h},x_{h} )\to  v(t,x)$ as $h\to 0$, and both $V^{\tau_h,h}$ and $v$ are semi-concave in $x$, the stability of subdifferential yields that $p\in D^+v(t,x)$. While because $\nabla_x v(t,x)$ exists, we get $p=\nabla_x v(t,x)$. Note that this is for any convergent subsequence of $\nabla^h V^{\tau_h,h}(t_{h},x_{h} )$, and so we obtain \eqref{4.31}.
\end{proof}

\quad Below, we show a weak type of semi-concavity of $V^{\tau,h}(t,\cdot)$. 
We use the ``doubling variable'' method, see e.g., \cite{vis}.

\begin{theorem}\lb{T.4.11}
Under the assumptions of Theorem \ref{T.4.3}, further assume {\rm (A3)}. Then there exists $C>0$ (also depending on {\rm(A3)}) such that for all $t\in \bbN_{T}^\tau$ and $x,y\in \bbZ_h^d$,
\[
V^{\tau,h}(t,x+y)+V^{\tau,h}(t,x-y)-2V^{\tau,h}(t,x)\leq  C\exp(CT)\,(|y|^2 +\sqrt{h}).
\]
\end{theorem}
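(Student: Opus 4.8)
\emph{Plan.} The quickest argument transfers the semi-concavity of the continuous value function $v$ to $V^{\tau,h}$ using the error estimate already in hand. By the first assertion of Theorem~\ref{T.4.10} there is a constant $C_1$, depending only on the universal constants and {\rm(A3)}, with
\[
v(t,z+y)+v(t,z-y)-2v(t,z)\le C_1\exp(C_1T)\,|y|^2\qquad\text{for all }t\in[0,T],\ z,y\in\bbR^d,
\]
and by Theorem~\ref{T.4.3}, $\|V^{\tau,h}-v\|_{L^\infty(\Omega^{\tau,h}_T)}\le C\exp(CT)\sqrt h$. For $t\in\bbN_T^\tau$ and $x,y\in\bbZ_h^d$ one then writes $V^{\tau,h}(t,x+y)+V^{\tau,h}(t,x-y)-2V^{\tau,h}(t,x)=\bigl[v(t,x+y)+v(t,x-y)-2v(t,x)\bigr]+E$ with $|E|\le 4\|V^{\tau,h}-v\|_{L^\infty(\Omega^{\tau,h}_T)}$, and the claimed bound is immediate. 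This is what I would do first; it uses nothing beyond Theorems~\ref{T.4.3} and~\ref{T.4.10}.

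\emph{A self-contained alternative.} If one prefers to argue inside the scheme, the natural route -- as the statement hints -- is the doubling-variable method. For a weight $(K_t)_{t\in\bbN_T^\tau}$ and parameters $\delta,R>0$ (with $\phi$ as in the proof of Theorem~\ref{T.4.3}) set
\[
\Psi_t:=\sup_{x,y\in\bbZ_h^d}\Bigl[V^{\tau,h}(t,x+y)+V^{\tau,h}(t,x-y)-2V^{\tau,h}(t,x)-K_t|y|^2-\tfrac{\delta}{R}\bigl(\phi(x)+\phi(x+y)+\phi(x-y)\bigr)\Bigr];
\]
the $\phi$-penalty forces the supremum to be attained at some $(x_1,y_1)$, and one sends $R\to\infty$ at the end. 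To propagate backward in time, at a maximizer for $\Psi_{t-\tau}$ one substitutes $V^{\tau,h}(t-\tau,p)=\calF_t(V^{\tau,h}(t,\cdot))(p)=V^{\tau,h}(t,p)+\tau H(t,p,\nabla^h V^{\tau,h}(t,p))+Nh\tau\Delta^h V^{\tau,h}(t,p)$ at $p=x_1$ and $p=x_1\pm y_1$, recognizes the leftover second difference of $V^{\tau,h}(t,\cdot)$ as bounded by $\Psi_t$ plus the penalty, observes that the $Nh\tau\Delta^h$ term is $\le CNh\tau\,\delta/R$ (because $x_1$ maximizes the relevant function of $x$) hence negligible, and is left with a Hamiltonian combination $\mathcal H_1:=H(t,x_1+y_1,p_+)+H(t,x_1-y_1,p_-)-2H(t,x_1,p_0)$, where $p_\bullet$ denotes $\nabla^h V^{\tau,h}(t,\cdot)$ at the three points. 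Lattice first-order comparisons at $(x_1,y_1)$ give $p_++p_--2p_0=O(\delta/R)$ and $p_+-p_-=2K_ty_1+O(K_th+\delta/R)$; feeding these into $\mathcal H_1$, and using that $H(t,x,\cdot)$ is concave while $H(t,\cdot,p)$ has $x$-Hessian bounded uniformly for $|p|\le M$ by {\rm(A3)}, one aims at $\mathcal H_1\le C(1+K_t)|y_1|^2+C\exp(CT)\sqrt h$. Choosing $K_t$ to solve the backward recursion $K_{t-\tau}=K_t+C\tau(1+K_t)$ with $K_T$ equal to the semi-concavity constant of $q$ keeps $K_t\le C\exp(CT)$ and cancels the $|y_1|^2$ terms, so that $\Psi_{t-\tau}\le\Psi_t+C\exp(CT)\tau\sqrt h$ after $R\to\infty$; summing from $t=T$ (where $\Psi_T\le0$) gives $\Psi_t\le C\exp(CT)\sqrt h$, and this yields the theorem.

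\emph{Main obstacle.} The delicate point is the estimate on $\mathcal H_1$: the naive bound from joint semi-concavity of $H$ in $(x,p)$ only produces $C(1+K_t^2)|y_1|^2$, which turns the weight recursion into a Riccati-type equation and forces $K_t$ to blow up at finite $T$. To obtain a bound \emph{linear} in $K_t$ -- and hence a result valid for all $T$ -- one must exploit the concavity of $H$ in $p$ to kill the contribution of the doubling penalty in the $p$-direction, so that only the $x$-variation (of size $\sim|y_1|$, paired with the factor $|p_+-p_-|\sim K_t|y_1|$) survives. Combining this with the non-smoothness of $H$ in $x$ (it is merely semi-concave, so $H_x$ is only of bounded variation) and with the $O(h)$ errors in the discrete first-order conditions is the technical heart of the argument, and is where the residual $\sqrt h$ enters, by the same balancing already seen in the proof of Theorem~\ref{T.4.3}.
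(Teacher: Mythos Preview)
Your quick transfer argument is correct and is a genuinely different, shorter route than the paper's. Once one knows that $v(t,\cdot)$ is uniformly semi-concave with constant of the form $C\exp(CT)$ (this is indeed what the reference behind Theorem~\ref{T.4.10} gives under (A1)--(A3), though the theorem as stated does not display the $T$-dependence explicitly), the triangle-inequality transfer via the $L^\infty$ bound of Theorem~\ref{T.4.3} yields the statement in one line. The paper instead argues entirely inside the discrete scheme by a three-variable doubling argument: it considers
\[
\Phi(t,x,y,z)=e^{C_1t}\bigl[V^{\tau,h}(t,x)+V^{\tau,h}(t,z)-2V^{\tau,h}(t,y)\bigr]-C_0e^{C_1T}\bigl(\delta+\delta^{-1}\psi(x,y,z)\bigr)-\eps|y|^2,
\]
with $\psi(x,y,z)=|x-y|^4+|z-y|^4+|x+z-2y|^2$, and derives a contradiction at a maximizer through the monotonicity of $\calF_t$ and the representation $H=\min_a[c+p\cdot f]$ together with the $W^{2,\infty}$ bounds on $c(\cdot,\cdot,a),f(\cdot,\cdot,a)$. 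Your route is more economical and reuses results already in hand; the paper's route is self-contained (it does not import the semi-concavity of $v$ from outside), keeps the $\exp(CT)$ growth explicit throughout, and illustrates a technique applicable when no continuous comparison object is available. Regarding your ``self-contained alternative'': the two-point test with quadratic penalty $K_t|y|^2$ is not what the paper does, and the Riccati obstruction you correctly identify is precisely why the paper works with three independent points and the quartic-plus-linear penalty $\psi$; that choice, combined with selecting the minimizing control $a$ at the middle point $y_0$ and using $\|c_a\|_{W^{2,\infty}},\|f_a\|_{W^{2,\infty}}$, is what produces a Hamiltonian estimate linear in the weight and closes the argument.
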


\begin{proof}
It suffices to show that there exist $C_T,C_T'>0$ depending on the assumptions such that
\beq\lb{4.17}
V^{\tau,h}(t,x)+V^{\tau,h}(t,z)-2V^{\tau,h}(t,y)\leq C_T\left(|x-y|^2+|z-y|^2+|x+z-2y|\right) +C_T'\sqrt{h}
\eeq
for all $t\in \bbN_{T}^\tau$ and $x,y,z\in \bbZ_h^d$. By the assumption on $q$, the inequality holds for $t=T$ with $C_T=\|q\|_{W^{2,\infty}}=:C_0$, and $C_T'=0$.

\quad Suppose for contradiction that \eqref{4.17} fails. Then we have for some $C_1\geq 1$ to be determined, and some $C\geq 2$,
\beq\lb{4.51}
\begin{aligned}
&V^{\tau,h}(t,x)+V^{\tau,h}(t,z)-2V^{\tau,h}(t,y)\\
&\qquad\quad- 2C_0e^{C_1(T-t)}\left(|x-y|^4+|z-y|^4+|x+z-2y|^2\right)^{1/2}\geq Ce^{C_1(T-t)}\sqrt{h}
\end{aligned}
\eeq 
for some $(t,x,y,z)=(t',x',y',z')\in \bbN_{T}^\tau\times \bbZ_h^d$. Since $V^{\tau,h}(t,\cdot)$ is Lipschitz continuous (with Lipschitz constant bounded by $C\exp(C(T-t))$ by Proposition \ref{P.4.1} with a shift in time), after enlarging the constant $C$ in \eqref{4.51} and replacing $y'$ by $y'+\sqrt{h}$ if necessary, we can assume that 
\beq\lb{4.40}
|x'+z'-2y'|\geq \sqrt{h}.
\eeq

\quad We denote
$\psi(x,y,z):=|x-y|^4+|z-y|^4+|x+z-2y|^2$,
and by \eqref{4.40},
\[
\delta:=\psi(x',y',z')^{1/2}\geq\sqrt{h}.
\]
Then for all $\eps>0$ sufficiently small, we obtain from \eqref{4.51} that
\begin{align*}
\Phi(t,x,y,z)&:=e^{C_1t}\left(V^{\tau,h}(t,x)+V^{\tau,h}(t,z)-2V^{\tau,h}(t,y)\right)- C_0e^{C_1T}\left(\delta+\delta^{-1}\psi(x,y,z)\right)-\eps|y|^2
\end{align*}
satisfies $\Phi(t',x',y',z')\geq e^{C_1T}\sqrt{h}$. With the positive $\eps$-term, $\Phi$ obtains its positive maximum that is at least $e^{C_1T}\sqrt{h}$ in $\Omega_T^{\tau,h}$ at some point $(t_0,x_0,y_0,z_0)\in \bbN_{T}^\tau\times\bbZ_h^d$, where $(t_0,x_0,y_0,z_0)$ depends on $\eps$ and $\delta$. It is clear that $t_0\leq T-\tau$ by the choice of $C_0$. 
Moreover, for
\[
\gamma_0:=\delta+\delta^{-1}\psi(x_0,y_0,z_0),
\]
we have
\beq\lb{4.18}
V^{\tau,h}(t_0,x_0)+V^{\tau,h}(t_0,z_0)-2V^{\tau,h}(t_0,y_0)\geq C_0 e^{C_1(T-t_0)}\gamma_0+e^{C_1(T-t_0)}\sqrt{h}.
\eeq
Due to the uniform boundedness of $V^{\tau,h}$, by further taking $\eps$ to be small enough depending on $C,T$ and $h$, it is easy to get $\eps|y_0|\leq h$.

\quad 
Now since
$\Omega^{\tau,h}_T\ni(t,x)\to e^{C_1t}V^{\tau,h}(t,x)- C_0e^{C_1T}\delta^{-1}\left(|x-y_0|^4+|x+z_0-2y_0|^2\right)$
is maximized at $(t_0,x_0)$, we get for all $(t,x)\in \Omega^{\tau,h}_T$ that
\begin{align*}
    V^{\tau,h}(t,x)&\leq e^{C_1(t_0-t)}V^{\tau,h}(t_0,x_0)+C_0e^{C_1(T-t)}\delta^{-1}\left(|x-y_0|^4+|x+z_0-2y_0|^2\right)\\
    &\quad-C_0e^{C_1(T-t_0)}\delta^{-1}\left(|x_0-y_0|^4+|x_0+z_0-2y_0|^2\right)=:\tilde V(t,x).
\end{align*}
Due to the equation and the monotonicity property of $\calF_t$ (defined in \eqref{calF}),
$V^{\tau,h}(t_0,x_0)=\calF_{t_0+\tau}(V^{\tau,h}(t_0+\tau,\cdot))(x_0)\leq\calF_{t_0+\tau}(\tilde{V}(t_0+\tau,\cdot))(x_0)$.
By direct computation, 
\[
\nabla_x^h (|x-y_0|^4+|x+z_0-2y_0|^2)=4(|x-y_0|^2+h^2)(x-y_0)+2(x+z_0-2y_0),
\]
\[
\Delta_x^h (|x-y_0|^4+|x+z_0-2y_0|^2)=(8+4d)|x-y_0|^2+2dh^2+2d.
\]
We then get
\beq\lb{4.21}
\begin{aligned}
&\frac{(1-e^{-C_1\tau})}\tau V^{\tau,h}(t_0,x_0)\leq  H\left(t_0+\tau,x_0,\nabla_x^h\tilde{V}(t_0+\tau,x_0) \right )+Nh\Delta_x^h \tilde{V}(t_0+\tau,x_0)\\
&\qquad \leq    H\left(t_0+\tau,x_0,2C_{T,\delta}(q_{x_0}+p_0) \right)+CC_{T,\delta} h(|x_0-y_0|^2+1)
\end{aligned}
\eeq
where 
\[
q_{x_0}:=2(|x_0-y_0|^2+h^2)(x_0-y_0),
\]
\beq\lb{4.33}
\begin{aligned}
C_{T,\delta}&:=C_0e^{C_1(T-t_0-\tau)}/\delta \quad\text{and}\quad p_0:=x_0+z_0-2y_0.
\end{aligned}
\eeq

\quad Similarly, since
$\Omega^{\tau,h}_T\ni(t,z)\to e^{C_1t}V^{\tau,h}(t,z)- C_0e^{C_1T}\delta^{-1}(|z-y_0|^4+|x_0+z-2y_0|^2)$
is maximized at $(t_0,z_0)$, we get
\beq\lb{4.22}
\begin{aligned}
\frac{(1-e^{-C_1\tau})}\tau V^{\tau,h}(t_0,z_0)\leq H\left(t_0+\tau,z_0,2C_{T,\delta}(q_{z_0}+p_0) \right)+CC_{T,\delta} h(|z_0-y_0|^2+1).
\end{aligned}
\eeq
where 
$q_{z_0}:=2(|z_0-y_0|^2+h^2)(z_0-y_0)$.

\quad Next, note that
$\Omega^{\tau,h}_T\ni(t,y)\to 2e^{C_1t}V^{\tau,h}(t,y)+ C_0 e^{C_1T}\delta^{-1}\psi(x_0,y,z_0)+\eps |y|^2$
is minimized at $(t_0,y_0)$. Hence we get
$
V^{\tau,h}(t_0,y_0)\geq\calF_{t_0+\tau}(\hat{V}(t_0+\tau,\cdot))(y_0)
$
where 
\begin{align*}
\hat{V}(t,y)&:=e^{C_1(t_0-t)}V^{\tau,h}(t_0,y_0)-(\eps/2)|y|^2+(\eps/2)|y_0|^2\\
&-(C_0/2)e^{C_1(T-t)}\delta^{-1}\psi(x_0,y,z_0)
    +(C_0/2)e^{C_1(T-t_0)}\delta^{-1}\psi(x_0,y_0,z_0).
\end{align*}
From this we obtain
\[
\begin{aligned}
&-\frac{(1-e^{-C_1\tau})}\tau  V^{\tau,h}(t_0,y_0)
\leq - H\left(t_0+\tau,y_0,2C_{T,\delta}(q_{y_0}+p_0)-\eps y_0 \right)-Nh\Delta_y^h\hat{V}(t_0+\tau,y_0)\\
&\qquad\leq - H\left(t_0+\tau,y_0,2C_{T,\delta}(q_{y_0}+p_0)-\eps y_0 \right)+CC_{T,\delta} h(|x_0-y_0|^2+|z_0-y_0|^2+1)+Ch\eps
\end{aligned}
\]
where 
\[
q_{y_0}:=(|x_0-y_0|^2+h^2)(x_0-y_0)+(|z_0-y_0|^2+h^2)(z_0-y_0),
\]
and $C_{T,\delta} $ and $p_0$ are given in \eqref{4.33}. Using $|H_p|\leq C$ and $\eps|y_0|\leq h$ yields
\beq\lb{4.23}
\begin{aligned}
-\frac{(1-e^{-C_1\tau})}\tau & V^{\tau,h}(t_0,y_0)
\leq - H\left(t_0+\tau,y_0,2C_{T,\delta}(q_{y_0}+p_0) \right)\\
&+CC_{T,\delta} h(|x_0-y_0|^2+|z_0-y_0|^2+1)+Ch
\end{aligned}
\eeq

\quad 
Now let $\alpha\in\calA$ be such that
\begin{align*}
H\left(t_0+\tau,y_0,2C_{T,\delta} (q_{y_0}+p_0)\right)=c(t_0+\tau,y_0,\alpha)+2C_{T,\delta}  f(t_0+\tau,y_0,\alpha)\cdot (q_{y_0}+p_0).
\end{align*}

By \eqref{p.15}, denoting $c_\alpha(\cdot):=c(t_0+\tau,\cdot,\alpha)$ and $f_\alpha(\cdot):=f(t_0+\tau,\cdot,\alpha)$, we have
\beq\lb{4.24}
\begin{aligned}
&H\left(t_0+\tau,x_0,2C_{T,\delta} (q_{x_0}+p_0) \right)+H\left(t_0+\tau,z_0,2C_{T,\delta} (q_{z_0}+p_0) \right)\\
&\qquad\quad-2H\left(t_0+\tau,y_0, 2C_{T,\delta} (q_{y_0}+p_0)\right)\\
&
\leq   c_\alpha(x_0)+c_\alpha(z_0)-2c_\alpha(y_0)+2C_{T,\delta} \left[f_\alpha(x_0)\cdot (q_{x_0}+p_0)\right.\\
&\qquad\quad+ \left. f_\alpha(z_0)\cdot (q_{x_0}+p_0)-2f_\alpha(y_0)\cdot (q_{y_0}+p_0)\right]\\
&=   c_\alpha(x_0)+c_\alpha(z_0)-2c_\alpha(y_0)+2C_{T,\delta} \left[ (f_\alpha(x_0)-f_\alpha(y_0))\cdot q_{x_0}+(f_\alpha(z_0)-f_\alpha(y_0))\cdot q_{z_0}+\right.\\
&\qquad\quad+ \left. (f_\alpha(x_0)+f_\alpha(z_0)-2f_\alpha(y_0))\cdot p_0\right]\\
&\leq \|c_\alpha\|_{W^{2,\infty}}(|x_0-y_0|^2+|z_0-y_0|^2+|x_0+z_0-2y_0|)\\
&\qquad\quad+2C_{T,\delta} \|f_\alpha\|_{\Lip}(|x_0-y_0||q_{x_0}|+|z_0-y_0||q_{z_0}|)\\
&\qquad\quad+ 2C_{T,\delta} \|f_\alpha\|_{W^{2,\infty}}(|x_0-y_0|^2+|z_0-y_0|^2+|x_0+z_0-2y_0|)|x_0+z_0-2y_0|,
\end{aligned}
\eeq
where we used $2q_{y_0}=q_{x_0}+q_{z_0}$ and that for any $x,y,z\in\bbR^d$ and $g\in W^{2,\infty}(\bbR^d)$, 
$|g(x)+g(z)-2g(y)|\leq \|g\|_{W^{2,\infty}}(|x-y|^2+|z-y|^2+|x+z-2y|)$.
By Young's inequality, we get
\[
|x_0-y_0||q_{x_0}|+|z_0-y_0||q_{z_0}|\leq 2|x_0-y_0|^4+2|z_0-y_0|^4+h^4.
\]
Also using the definitions of $C_{T,\delta}$ and $\psi$, we get the left-hand side of \eqref{4.24}
\[\leq Ce^{C_1(T-t_0)}(\delta+\delta^{-1}\psi(x_0,y_0,z_0)+h^4)=Ce^{C_1(T-t_0)}\gamma_0+Ce^{C_1(T-t_0)}h^4/\delta
\]
with $C>0$ only depending on $\|q\|_{W^{2,\infty}}$, $\|c_\alpha\|_{W^{2,\infty}}$ and $\|f_\alpha\|_{W^{2,\infty}}$.

\quad Now summing up \eqref{4.21}, \eqref{4.22} and twice of \eqref{4.23}, we get
\begin{align*}
& \frac{(1-e^{-C_1\tau})}\tau\left[V^{\tau,h}(t_0,x_0)+V^{\tau,h}(t_0,z_0)-2V^{\tau,h}(t_0,y_0)\right]\\
&\qquad\qquad\leq Ce^{C_1(T-t_0)} \gamma_0+Ce^{C_1(T-t_0)}h^4/\delta+CC_{T,\delta} h(|x_0-y_0|^2+|z_0-y_0|^2+1)+Ch\\
&\qquad\qquad\leq Ce^{C_1(T-t_0)} \gamma_0+Ce^{C_1(T-t_0)}\delta^{-1}(|x_0-y_0|^4+|z_0-y_0|^4)+Ce^{C_1(T-t_0)}\sqrt{h}\\
&\qquad\qquad\leq Ce^{C_1(T-t_0)} \gamma_0+Ce^{C_1(T-t_0)}\sqrt{h}, 
\end{align*}
where we used $\delta\geq\sqrt{h}$ in the second inequality.
Finally, this and \eqref{4.18} yield
\[
C_1(C_0e^{C_1(T-t_0)}\gamma_0+e^{C_1(T-t_0)}\sqrt{h})\leq Ce^{C_1(T-t_0)}\gamma_0+Ce^{C_1(T-t_0)}\sqrt{h},
\]
with $C>0$ depending only on $d,N$ and the regularity assumptions of $q,c,f$. 
Thus, if $C_1$ is sufficiently large depending only on the assumptions, we get a contradiction which finishes the proof of \eqref{4.17}, which finishes the proof. 
\end{proof}

\section{Generalization: a PDE perspective}
\label{sc5}

\quad In this section, we consider PI for HJB equations with a general Hamiltonian.
For convenient use of the Legendre transform, we write the system in the forward-in-time setting. 
It is easy to carry over to the backward-in-time setting.

\quad Suppose $\calH:[0,T]\times\bbR^d\times\bbR^d\to\bbR$ is continuous such that $\calH(t,x,p)$ is convex in $p$.
Let $\calL(t,x,\mu)$ be the Legendre transform of $\calH$, that is,
\[
\calL(t,x,\mu):=\sup_{p\in\bbR^d}\left[p\cdot \mu-\calH(t,x,p)\right] \qquad \text{ for } (t,x,\mu) \in [0,T] \times \bbR^d \times \bbR^d.
\]
We always have the following inequality
$\calL(t,x,\mu)+\calH(t,x,p)\geq p\cdot \mu$,
with equality holds if and only if $\mu=\nabla_p \calH(t,x,p)$, and if and only if $p=\nabla_\mu \calL(t,x,\mu)$. 

\quad The HJB equation is
\beq\lb{e.HJ}
\left\{
\begin{aligned}
    &\partial_t v(t,x)+\calH(t,x,\nabla v(t,x))=0 &&\quad\text{ in }(0,T)\times \bbR^d,\\
    &v(0,x)=q(x) &&\quad \text{ on }  \bbR^d.
\end{aligned}
\right.
\eeq
Under some assumptions (see \cite{barles1990regularity,tranbook}), it is a classical result that $v$ is uniformly Lipschitz continuous if $q$ is Lipschitz continuous. So we can assume 
\beq\lb{5.4}
\|\nabla v\|_{L^\infty([0,T]\times\bbR^d)}\leq M\quad\text{for some }M>0.
\eeq

\quad Now we take
$m_1:=\min_{\substack{|p|=2M,\\t\in [0,T],x\in\bbR^d}}\calH(t,x,p)$ and 
$m_2\geq \max_{\substack{|p|=3M,\\t\in [0,T],x\in\bbR^d}}[\calH(t,x,p)-m_1]/M$,
and we can assume that $m_2\geq 2$.
Then define
\[
\tilde{\calH}(t,x,p):=\left\{
\begin{aligned}
&\calH(t,x,p)   \qquad &&\text{ if }|p|\leq 2M,\\
&\max\left\{\calH(t,x,p), m_1+m_2(|p|-2M)\right\} \qquad &&\text{ if }
2M<|p|\leq 3M,\\
& m_1+m_2(|p|-2M) \qquad &&\text{ if }
|p|> 3M.
\end{aligned}
\right.
\]
It is not hard to verify that $\tilde{\calH}$ is continuous in all its dependencies, and is convex in $p$. 
Due to \eqref{5.4}, $v$ is also a solution of \eqref{e.HJ} with $\calH$ replaced by $\tilde{\calH}$. Moreover for $N:=m_2/2\geq 1$, we have
\beq\lb{5.3}
|\tilde{\calH}_p(t,x,p)|\leq 2N\quad\text{ in }[0,T]\times\bbR^d\times\bbR^d.
\eeq
We define $\tilde\calL$ as the Legendre transform of $\tilde\calH$. 
Since the goal is to approximate $v$, it suffices to study $\tilde\calH$ and $\tilde{\calL}$ instead of $\calH$ and ${\calL}$. From now on, with a slight abuse of notation, we write $\calH$ and $\calL$ as $\tilde\calH$ and $\tilde\calL$, respectively.

\quad With the modified operators, we can consider the semi-discretization. For $h>0$, 
\beq\lb{5.0}
\left\{
\begin{aligned}
    &\partial_t v^h(t,x)+\calH(t,x,\nabla^h v^h(t,x))=Nh\Delta^hv^h(t,x) &&\text{ in }(0,T)\times \bbR^d,\\
    &v^h(0,x)=q(x) &&\text{ on }  \bbR^d.
\end{aligned}
\right.
\eeq
As before, $N\geq \|\nabla_p\calH\|_\infty/2$ guarantees that the finite difference scheme is monotone. 
We also assume that there exists $C>0$ such that for all $t,x,p\in [0,T]\times\bbR^d\times\bbR^d$,
\beq\lb{5.2}
|\calH_t(t,x,p)|,\,|\calH_x(t,x,p)|\leq C(1+|p|),\quad |\calH(t,x,0)|\leq C.
\eeq
We can replace $C(1+|p|)$ by just $C$ for the modified operator.
We will not discuss the space-time discretization of \eqref{e.HJ} since it is similar.

\quad From the above discussions, we note the PDE in \eqref{e.HJ} can be rewritten as
\[
\partial_t v(t,x)+\sup_{\mu\in\bbR^d}\left\{\nabla v(t,x)\cdot\mu-\calL(t,x,\mu)\right\}=0
\]
and the supremum is achieved when 
$\mu(t,x)=\nabla_p\calH(t,x,\nabla v(t,x))$.
Therefore, we give the following iteration scheme for \eqref{5.0}. 
Fixing small $h>0$, we start with a uniformly bounded and Lipschitz continuous function $v^h_0(t,x)$, and then iteratively compute $v^h_n$ as follows.
For $n\geq 1$, let $v^h_n=v^h_n(t,x)$ be the solution to 
\beq\lb{5.1}
\left\{
\begin{aligned}
    &\partial_t v^h_n+\mu^h_{n-1}(t,x)\cdot \nabla^h v^h_n-\calL(t,x,\mu^h_{n-1}(t,x))=Nh\Delta^hv^h_n &&\text{ in }(0,T)\times \bbR^d,\\
    &v^h_n(0,x)=q(x) &&\text{ on }  \bbR^d
\end{aligned}
\right.
\eeq
where we denoted
$\mu^h_n(t,x):=\nabla_p \calH(t,x,\nabla^h v^h_{n}(t,x))$.
Note that $\calL(t,x,\mu^h_{n}(t,x))$ is finite due to $\mu_n^h\leq 2N$.
Essentially, $v_n^h$ solves a linearized equation of \eqref{5.0}. 

\quad Let $v_n^h$ (for each $n\geq 1$ with given $v_0^h$), $v^h$ and $v$ be, respectively, Lipschitz continuous solutions to \eqref{5.1}, \eqref{5.0} and \eqref{e.HJ}.
We have the following monotonicity property.

\begin{proposition}\lb{P.1}
Suppose $N\geq\max\{1,\|\nabla_p\calH\|_\infty/2\}$, and $\calH(t,x,p)$ is convex in $p$ and satisfies \eqref{5.3} and \eqref{5.2}. Let $q$ and $v_0^h$ be uniformly bounded and Lipschitz continuous for all $h>0$. Then the solutions $v_n^h$ are uniformly bounded for all $n\geq 1$ and $h>0$. Moreover, we have for all $n\geq 0$,
\[
v_{n+1}^h(t,x)\leq v_n^h(t,x) \qquad \text{ for all $(t,x)\in [0,T]\times\bbR^d$.}
\]
\end{proposition}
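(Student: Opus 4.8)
The plan is to obtain both assertions from the comparison principle for the monotone semi-discrete scheme (available since $N\ge\|\nabla_p\calH\|_\infty/2$, as in Lemma~\ref{L.2.1}), together with the Fenchel--Young inequality $\calL(t,x,\mu)+\calH(t,x,p)\ge p\cdot\mu$, whose equality case is characterized by $\mu=\nabla_p\calH(t,x,p)$. The monotonicity $v^h_{n+1}\le v^h_n$ is the heart of the argument -- it is the ``policy improvement'' step -- and the uniform bound then follows from it with little extra work.

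For the monotonicity I would fix $n\ge1$ and rewrite \eqref{5.1} as $\partial_t v^h_n-Nh\Delta^h v^h_n+\mu^h_{n-1}\cdot\nabla^h v^h_n=\calL(t,x,\mu^h_{n-1})$. Since $\mu^h_n=\nabla_p\calH(t,x,\nabla^h v^h_n)$, the equality case of Fenchel--Young gives $\mu^h_n\cdot\nabla^h v^h_n-\calL(t,x,\mu^h_n)=\calH(t,x,\nabla^h v^h_n)$, whereas the inequality with $\mu=\mu^h_{n-1}$ and $p=\nabla^h v^h_n$ gives $\calL(t,x,\mu^h_{n-1})-\mu^h_{n-1}\cdot\nabla^h v^h_n\ge-\calH(t,x,\nabla^h v^h_n)=\calL(t,x,\mu^h_n)-\mu^h_n\cdot\nabla^h v^h_n$. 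Feeding this into the rewritten equation shows that $v^h_n$ is a supersolution of \eqref{5.1} with index $n+1$; as it also carries the same initial datum $q$ as $v^h_{n+1}$, the comparison principle (applicable because $|\mu^h_{n-1}|\le2N$, so the linearized scheme is monotone) yields $v^h_{n+1}\le v^h_n$. To run the same step at $n=0$ one uses that $v^h_0$ is a supersolution of \eqref{5.0} -- equivalently, of the $n=1$ equation, again via the equality case -- which is the analogue here of admissibility of the initial policy $\alpha_0$ in Proposition~\ref{P.2.1}.

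For the uniform bound, the lower estimate follows by taking $p=0$ in Fenchel--Young: $\calL(t,x,\mu)\ge-\calH(t,x,0)\ge-C$ by \eqref{5.2}, uniformly in $(t,x,\mu)$. Hence $w^-(t,x):=-\|q\|_{L^\infty}-Ct$ is a subsolution of \eqref{5.1} lying below $v^h_n$ at $t=0$, and comparison gives $v^h_n\ge-\|q\|_{L^\infty}-CT$ for all $n\ge1$ and $h>0$. The upper estimate is then immediate from the monotonicity already proved, namely $v^h_n\le v^h_0$, combined with the hypothesis that $\|v^h_0\|_{L^\infty}$ is bounded uniformly in $h$; altogether $\|v^h_n\|_{L^\infty([0,T]\times\bbR^d)}\le C(1+T)$ uniformly in $n$ and $h$.

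I expect the only genuine obstacle to be pinning down where convexity is used: it is precisely the equality case of Fenchel--Young at $\mu^h_n=\nabla_p\calH(t,x,\nabla^h v^h_n)$ that converts the $n$-th iterate into a supersolution of the $(n+1)$-st linearized problem, so convexity of $\calH(t,x,\cdot)$ is essential and cannot be bypassed. The remaining ingredients are routine: finiteness of $\calL(t,x,\mu^h_n)$, guaranteed by $|\mu^h_n|\le2N$ (i.e.\ \eqref{5.3}); monotonicity of each linearized scheme in \eqref{5.1}, so that the comparison principle applies on all of $\bbR^d$ as in Lemma~\ref{L.2.1}; and the minor caveat about the base case $n=0$ noted above.
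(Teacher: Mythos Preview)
Your argument via Fenchel--Young plus comparison is exactly the adaptation of the paper's Proposition~\ref{P.2.1} to the Legendre-transform setting of Section~\ref{sc5}, and it is correct for $n\ge1$: since $\mu^h_n=\nabla_p\calH(t,x,\nabla^h v^h_n)$ realizes the equality case, one has $\calL(t,x,\mu^h_{n-1})-\mu^h_{n-1}\cdot\nabla^h v^h_n\ge-\calH(t,x,\nabla^h v^h_n)=\calL(t,x,\mu^h_n)-\mu^h_n\cdot\nabla^h v^h_n$, so $v^h_n$ is a supersolution of the $(n{+}1)$-st problem and comparison gives $v^h_{n+1}\le v^h_n$. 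The lower bound $v^h_n\ge-\|q\|_\infty-Ct$ from $\calL\ge-\calH(t,x,0)\ge-C$ is also fine.

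The gap is at $n=0$. In Proposition~\ref{P.2.1} the function $v^h_0$ solves a PDE (with the given $\alpha_0$), which is what makes it a supersolution of the $n=1$ equation. Here, by contrast, $v^h_0$ is an \emph{arbitrary} bounded Lipschitz function; it need not satisfy $v^h_0(0,\cdot)\ge q$, let alone be a supersolution of \eqref{5.0}. A constant $v^h_0\equiv -K$ with $K$ large already violates $v^h_1\le v^h_0$ at $t=0$. Your ``caveat'' correctly isolates the missing hypothesis, but you cannot simply assume it---it is not part of Proposition~\ref{P.1}. The honest statement is that the monotonicity holds for $n\ge1$ as written, and for $n=0$ only under the additional assumption that $v^h_0$ is a supersolution of \eqref{5.0} with $v^h_0(0,\cdot)\ge q$ (this is most likely a minor imprecision in the paper's formulation).

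This also affects your upper bound: you invoke $v^h_n\le v^h_0$, which rests on the $n=0$ step. The clean fix is to use only $v^h_n\le v^h_1$ for $n\ge1$ and bound $v^h_1$ directly. Since $v^h_0$ is uniformly Lipschitz in $h$, $|\nabla^h v^h_0|\le M_0$, hence $|\calL(t,x,\mu^h_0)|=|\nabla^h v^h_0\cdot\mu^h_0-\calH(t,x,\nabla^h v^h_0)|\le 2NM_0+|\calH(t,x,0)|+2NM_0\le C'$, and $w^+(t):=\|q\|_\infty+C't$ is a supersolution of the $n=1$ equation, giving $v^h_1\le\|q\|_\infty+C'T$. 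Combined with the lower bound, this yields the uniform bound for all $n\ge1$ without appealing to the $n=0$ monotonicity.
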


\quad 
In the current setting, $\mu_n^h(t,x),-\calL(t,x,\mu)$ and $\nabla_p\calH(t,x,p)$ correspond to $\alpha_n^h(t,x),c(t,x,a)$ and $\alpha(t,x,p)$ in Section \ref{sc2}, respectively.
The proof of Proposition \ref{P.1} then is identical to that of Proposition \ref{P.2.1} after converting the problem to a backward-in-time setting by considering $w^h_n(t,x)=v^h_n(T-t,x)$.

\quad Additionally, we have the following convergence results.  
The proof of Theorem \ref{T.5.2} follows those of Theorems \ref{T.2.4} and \ref{T.2.add}, and the proof of Theorem \ref{T.5.3} is analogous to those of Theorems \ref{T.2.8} and \ref{T.3.5}. 

\begin{theorem}\lb{T.5.2}
Under the assumptions of Proposition \ref{P.1}, 
for all $R\geq 1$ there exists a constant $C$ depending only on $T$ and the assumptions such that we have for all $t \in[ 0,T]$,
\[
\begin{aligned}
\int_{B_R}\left|v^h_n(t,x)-v^h(t,x)\right|^2\,dx \leq  C2^{-n}{h} e^{Ct/h}R^d,
\end{aligned}
\]
\[
\begin{aligned}
\int_{B_R}\left|\nabla_p\calH(t,x,\nabla^h v^h_n(t,x))- \nabla_p\calH(t,x,\nabla^h v^h(t,x))\right|^2\,dx 
\leq {C2^{-n}e^{Ct/h}}R^d/h.
\end{aligned}
\]
Moreover, we have
$\sup_{(t,x)\in [0,T]\times \bbR^d} |v^h(t,x)-v(t,x)|\leq C\sqrt{h}$.
\end{theorem}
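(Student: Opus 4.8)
The plan is to reduce all three estimates to the results already established for the min-Hamiltonian, namely Theorems \ref{T.2.4}, \ref{T.2.add} and \ref{T.2.5} (equivalently \ref{T.4.3}), by exploiting the Legendre duality and the convexity of $\calH$ to put the linearized scheme \eqref{5.1} into exactly the form used there.

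First I would rewrite \eqref{5.1}. Since $\mu^h_{n-1}=\nabla_p\calH(t,x,\nabla^h v^h_{n-1})$, the Fenchel equality gives $\calL(t,x,\mu^h_{n-1})=\nabla^h v^h_{n-1}\cdot\mu^h_{n-1}-\calH(t,x,\nabla^h v^h_{n-1})$, so \eqref{5.1} becomes the linear advection--diffusion equation
\[
\partial_t v^h_n+\mu^h_{n-1}\cdot\nabla^h v^h_n+\calH(t,x,\nabla^h v^h_{n-1})-\mu^h_{n-1}\cdot\nabla^h v^h_{n-1}=Nh\Delta^h v^h_n,
\]
whose drift satisfies $|\mu^h_{n-1}|\le 2N$ by \eqref{5.3} (the analogue of $\|f\|_\infty\le 2N$). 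Subtracting \eqref{5.0} and writing $w_n:=v^h_n-v^h$, a direct computation gives
\[
\partial_t w_n+\mu^h_{n-1}\cdot\nabla^h w_n-Nh\Delta^h w_n=E_{n-1},
\]
where, with $p:=\nabla^h v^h_{n-1}$ and $q:=\nabla^h v^h$,
\[
E_{n-1}:=\calH(t,x,q)-\calH(t,x,p)-\nabla_p\calH(t,x,p)\cdot(q-p).
\]
Convexity of $\calH(t,x,\cdot)$ yields $E_{n-1}\ge 0$ (so $v^h_n\ge v^h$, consistently with Proposition \ref{P.1}), while the two-sided bound
$0\le E_{n-1}\le\bigl(|\nabla_p\calH(t,x,q)|+|\nabla_p\calH(t,x,p)|\bigr)|q-p|\le 4N\,|\nabla^h w_{n-1}|$,
together with \eqref{p.6}, shows that $E_{n-1}$ plays precisely the role of the error term $(f_n-f)\cdot\nabla^h v+(c_n-c)$ in the proof of Theorem \ref{T.2.4}: it is nonnegative and controlled linearly by $|D^h w_{n-1}|+|D^{-h}w_{n-1}|$.

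With this reformulation, the first two displayed estimates follow by repeating the weighted $L^2$ energy argument of Theorems \ref{T.2.4} and \ref{T.2.add} verbatim, adapted to the forward-in-time normalization of \eqref{e.HJ}: one introduces the cut-off $\varphi=\varphi_R$ from \eqref{p.8}, sets $E_{t,n}:=\tfrac12 e^{-At}\int_{\bbR^d}|w_n(t,x)|^2\varphi(|x|)\,dx$, uses $\int\Delta^h w_n\,w_n\varphi=-\int|D^h w_n|^2\varphi-\int w_n D^{-h}w_n\cdot D^{-h}\varphi$, estimates the cross terms by Young's inequality with parameters $\sigma_1\sim h$, $\sigma_2\sim h$, invokes \eqref{p.13}, and picks $A$ of size $C(T)/h$. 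Since $N\ge1$, the resulting recursion $\int_0^t e^{-As}G^h_{s,n}\,ds\le\tfrac12\int_0^t e^{-As}G^h_{s,n-1}\,ds$ produces the factor $2^{-n}$, and the initial quantity $\int_0^t e^{-As}G^h_{s,0}\,ds$ is bounded by $C(T)R^d$ because, arguing as in Proposition \ref{P.2.0} and Lemma \ref{L.2.3} (now with \eqref{5.2} and $|\calH_p|\le 2N$), $v^h$ is Lipschitz with constant independent of $h$, while $v^h_0$ is Lipschitz by hypothesis. Undoing the $e^{-At}$ weight gives the stated factor $e^{Ct/h}$, and the policy bound then follows from the Lipschitz continuity of $\alpha$ exactly as in Theorem \ref{T.2.add}. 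Finally, the estimate $\sup_{[0,T]\times\bbR^d}|v^h-v|\le C\sqrt h$ follows from the doubling-of-variables argument in the proof of Theorem \ref{T.2.5} (equivalently \ref{T.4.3}) with no essential change: \eqref{5.0} is a monotone scheme for \eqref{e.HJ} since $N\ge\|\nabla_p\calH\|_\infty/2$, the required regularity of $\calH$ is furnished by \eqref{5.2} in place of \eqref{2.5}, both $v$ and $v^h$ are uniformly Lipschitz, and the choice $\eps=M^{-1/2}L^{1/2}h^{1/4}$ gives $\sigma\le CTM\sqrt h$ with $M\ge\|\nabla v\|_\infty$.

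I expect the only genuinely non-mechanical point to be the reformulation step: one must verify that, once \eqref{5.1} is rewritten via Legendre duality, it reproduces \emph{exactly} the structure exploited for the min-Hamiltonian --- a transport term with drift bounded by $2N$, plus a nonnegative source controlled linearly by the previous gradient error --- so that the monotone-scheme/comparison machinery (Proposition \ref{P.1}) and the energy estimate of Theorem \ref{T.2.4} apply unchanged. Everything downstream (the Young-inequality bookkeeping, the choice of $\sigma_i$ and $A$, and the doubling argument for the $\sqrt h$-bound) is then a routine repetition of the earlier proofs, the forward-in-time convention being the only cosmetic difference.
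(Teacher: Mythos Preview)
Your proposal is correct and is precisely the kind of adaptation the paper intends: Theorem \ref{T.5.2} is stated in Section \ref{sc5} without proof, the implicit claim being that the arguments of Theorems \ref{T.2.4}, \ref{T.2.add} and \ref{T.2.5} carry over once the linearized scheme \eqref{5.1} is recognized as having the same structure. Your Legendre-duality rewriting, which packages the source into the single Bregman-type remainder $E_{n-1}=\calH(\cdot,q)-\calH(\cdot,p)-\nabla_p\calH(\cdot,p)\cdot(q-p)$ and bounds it by $4N|\nabla^h w_{n-1}|$ using only \eqref{5.3}, is in fact slightly cleaner than a literal transcription of the proof of Theorem \ref{T.2.4}: the latter would split $\mu^h_{n-1}\cdot\nabla^h v^h_n-\calL(\cdot,\mu^h_{n-1})-\calH(\cdot,\nabla^h v^h)$ into ``$f_n-f$'' and ``$c_n-c$'' pieces, which would seem to call for Lipschitz continuity of $\nabla_p\calH$ in $p$ (not assumed in Proposition \ref{P.1}). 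Your combined bound sidesteps this and uses only convexity and $|\calH_p|\le 2N$, so the energy recursion goes through with $A\sim C/h$ exactly as you describe, and the $\sqrt h$ estimate is the doubling-of-variables argument of Theorem \ref{T.2.5} verbatim with \eqref{5.2} replacing \eqref{2.5}.
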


\quad Next, let $\calH$ take the form
$\calH(t,x,p):=\sup_{a \in A}\left[c(t,x,a)+p\cdot f(t,x,a)\right]$,
where $A$ is some set, $c:[0,T]\times\bbR^d\times A\to\bbR$ and $f:[0,T]\times\bbR^d\times A\to\bbR^d$.

\begin{theorem}\lb{T.5.3}
Under the assumptions of Theorem \ref{T.5.2}, assume that $c(t,\cdot,a),f(t,\cdot,a)$ are bounded in $W^{2,\infty}(\bbR^d)$ uniformly for all $t\in[0,T]$ and $a \in A$.
Then for each $t\in [0,T]$, we have for a.e.  $x\in\bbR^d$,
\[
\alpha(t_h,x_h,\nabla^h v^h(t_h,x_h))\to \alpha(t,x,\nabla v(t,x))\quad\text{ as }h\to 0,
\]
where $[0,T]\times\bbR^d\ni(t_h,x_h)\to (t,x)$ as $h\to 0$.

\quad Moreover, there exists $C>0$ depending only on the assumptions such that for all $h\in (0,1)$, $t\in [0,T]$ and $x,y\in \bbR^d$,
$v^h(t,x+y)+v^{h}(t,x-y)-2v^{h}(t,x)\leq C\exp(CT)(|y|^2 +\sqrt{h})$.

\end{theorem}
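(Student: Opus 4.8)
The plan is to reproduce, in the forward‑in‑time and general‑convex‑Hamiltonian setting of Section~\ref{sc5}, the arguments behind Theorems~\ref{T.2.8} and \ref{T.4.10}--\ref{T.4.11}, proving the weak semi‑concavity of $v^h$ first and then extracting the a.e.\ convergence of the policy exactly as in the proof of Theorem~\ref{T.4.10}. For the semi‑concavity estimate it suffices, as in the proof of Theorem~\ref{T.4.11}, to produce constants $C_T,C_T'>0$ depending only on $T$ and the assumptions with
\[
v^h(t,x)+v^h(t,z)-2v^h(t,y)\le C_T\bigl(|x-y|^2+|z-y|^2+|x+z-2y|\bigr)+C_T'\sqrt h
\]
for all $t\in[0,T]$, $x,y,z\in\bbR^d$; the stated inequality then follows by setting $z=x-y$ and relabelling (the linear term vanishes for that symmetric configuration). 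At $t=0$ this holds with $C_T=\|q\|_{W^{2,\infty}}$, $C_T'=0$ since $v^h(0,\cdot)=q$ — here one uses that $q$ is uniformly semi‑concave, which is already implicit in the hypotheses making $v$ semi‑concave via \cite{vis}. One also notes that the truncated Hamiltonian of Section~\ref{sc5} (still written $\calH$) retains the form $\sup_{a}[c(t,x,a)+p\cdot f(t,x,a)]$ with $c(t,\cdot,a),f(t,\cdot,a)$ bounded in $W^{2,\infty}(\bbR^d)$ uniformly, the radial ``controls'' added by the truncation contributing only affine terms in $x$; hence the estimates of Theorem~\ref{T.4.11} apply verbatim to $\calH$.

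Suppose the three‑point inequality fails for all constants. Then for $C\ge2$ and $C_1\ge1$ to be fixed later there is a point $(t',x',y',z')$ at which $v^h(t,x)+v^h(t,z)-2v^h(t,y)-2C_0e^{C_1t}\psi(x,y,z)^{1/2}\ge Ce^{C_1t}\sqrt h$, where $\psi(x,y,z):=|x-y|^4+|z-y|^4+|x+z-2y|^2$; using the uniform Lipschitz bound on $v^h$ we may assume $|x'+z'-2y'|\ge\sqrt h$. With $\delta:=\psi(x',y',z')^{1/2}\ge\sqrt h$ and small $\eps>0$, the functional
\[
\Phi(t,x,y,z):=e^{-C_1t}\bigl(v^h(t,x)+v^h(t,z)-2v^h(t,y)\bigr)-C_0\bigl(\delta+\delta^{-1}\psi(x,y,z)\bigr)-\eps|y|^2
\]
satisfies $\Phi(t',x',y',z')\ge\sqrt h>0$ (since $\delta+\delta^{-1}\psi\ge2\sqrt\psi$ with equality at $(x',y',z')$). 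Boundedness of $v^h$, coercivity of the quartic $\psi$ in $(x,z)$ relative to $y$, and the term $\eps|y|^2$ force $\Phi$ to attain a positive maximum at some $(t_0,x_0,y_0,z_0)$; choosing $C_0$ large relative to $\|q\|_{W^{2,\infty}}$ rules out $t_0=0$. The standard penalization estimates (compare \eqref{3.8}) then give $|x_0-y_0|^2,|z_0-y_0|^2,|x_0+z_0-2y_0|^2\lesssim\psi(x_0,y_0,z_0)\le\delta^2$ and, for $\eps$ small, $\eps|y_0|\le h$.

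Freezing $(y,z)=(y_0,z_0)$ shows $v^h$ is touched from above at $(t_0,x_0)$ by a smooth function $\tilde v_x$ whose spatial finite differences at $x_0$ are $\nabla^h\tilde v_x=2C_{T,\delta}(q_{x_0}+p_0)$ and $\Delta^h\tilde v_x=O\bigl(C_{T,\delta}(|x_0-y_0|^2+1)\bigr)$, with $q_{x_0}=2(|x_0-y_0|^2+h^2)(x_0-y_0)$, $p_0=x_0+z_0-2y_0$, and $C_{T,\delta}=C_0e^{C_1t_0}/\delta$ (the analogue of \eqref{4.33}). Since $v^h$ is $C^1$ in $t$ and solves \eqref{5.0}, and the scheme is monotone (because $N\ge\|\nabla_p\calH\|_\infty/2$), the touching gives $\partial_t v^h(t_0,x_0)\le-\calH\bigl(t_0,x_0,2C_{T,\delta}(q_{x_0}+p_0)\bigr)+CC_{T,\delta}h(|x_0-y_0|^2+1)$, and symmetrically at $z_0$, and the reversed inequality at $y_0$ (touched from below) — exactly \eqref{4.21}--\eqref{4.23} but with $\partial_t$ in place of $\partial_t^\tau$ and with no $O(\tau)$ remainders. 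On the other hand, since $\Phi$ is maximized in $t$ at $t_0\in(0,T]$, $\partial_t\bigl(v^h(\cdot,x_0)+v^h(\cdot,z_0)-2v^h(\cdot,y_0)\bigr)\big|_{t_0}\ge C_1\bigl(v^h(t_0,x_0)+v^h(t_0,z_0)-2v^h(t_0,y_0)\bigr)\ge C_1\bigl(C_0e^{C_1t_0}\gamma_0+e^{C_1t_0}\sqrt h\bigr)$, where $\gamma_0=\delta+\delta^{-1}\psi(x_0,y_0,z_0)$. Adding the three differential inequalities bounds the left side above by $-\bigl[\calH(\,\cdot\,x_0)+\calH(\,\cdot\,z_0)-2\calH(\,\cdot\,y_0)\bigr]$ plus controlled $Nh\Delta^h$ terms; choosing a control $\alpha\in A$ realizing the supremum defining $\calH$ at the $y_0$‑point and using it as a competitor at $x_0,z_0$, the $W^{2,\infty}$ bounds on $c(t,\cdot,\alpha),f(t,\cdot,\alpha)$ together with Young's inequality and $\delta\ge\sqrt h$ bound this $\calH$‑combination below by $-Ce^{C_1t_0}\gamma_0-Ce^{C_1t_0}\sqrt h$ (this is precisely \eqref{4.24}), while $Nh\Delta^h$ contributes $O(e^{C_1t_0}\sqrt h)$ since $C_{T,\delta}h\le C_0e^{C_1t_0}\sqrt h$. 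Altogether $C_1(C_0\gamma_0+\sqrt h)\le C(\gamma_0+\sqrt h)$, a contradiction once $C_1>C$ (with $C_0\ge1$), which proves the three‑point inequality and hence the weak semi‑concavity of $v^h$.

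The a.e.\ convergence of the policy is then obtained exactly as in the proof of Theorem~\ref{T.4.10}: it reduces to showing $\nabla^h v^h(t_h,x_h)\to\nabla v(t,x)$ for a.e.\ $x$; the uniform Lipschitz bound on $v^h$ yields a subsequential limit $p$; the bound $\|v-v^h\|_{L^\infty}\le C\sqrt h$ of Theorem~\ref{T.5.2} gives $v^h(t_h,\cdot)\to v(t,\cdot)$ locally uniformly; the semi‑concavity of $v(t,\cdot)$ (\cite{vis}) together with the weak semi‑concavity of $v^h$ just established identifies $p$ as an element of $D^+v(t,x)$, hence as $\nabla v(t,x)$ at a point of differentiability of $v(t,\cdot)$; and continuity of $\alpha$ concludes. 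The main obstacle is the doubling‑of‑variables step: arranging $\Phi$ so that its maximum is attained and is interior in time, and carrying out the $\calH$‑combination via a suboptimal control with the finite‑difference bookkeeping that produces exactly the $\sqrt h$ loss (and no $\tau$ loss, since time is not discretized here). Once that is set up, the remaining computations are a straightforward transcription of \eqref{4.21}--\eqref{4.24}, with the operator $\calF_t$ of \eqref{calF} replaced by $\partial_t$ and the comparison structure of \eqref{5.0}.
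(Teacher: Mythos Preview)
Your proposal is correct and follows essentially the same route the paper indicates: the statement in Section~\ref{sc5} is presented without proof, and the paper makes clear (via the analogous Theorems~\ref{T.2.8} and \ref{T.4.10}--\ref{T.4.11}, whose proofs are written out in full) that the argument is the doubling-of-variables computation of Theorem~\ref{T.4.11} transcribed to continuous time, forward evolution, and $\calH=\sup_a[\,c+p\cdot f\,]$, together with the subdifferential-stability step of Theorem~\ref{T.4.10}. Your handling of the sign changes (the $e^{-C_1 t}$ weight, the reversed competitor inequality for a sup-Hamiltonian) and your remark that the truncated $\calH$ still has the required $\sup_a$ structure with $W^{2,\infty}$ ingredients are the right adaptations.

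Two small comments. First, you prove the weak semi-concavity of $v^h$ before the policy convergence and then cite it in the latter; the paper orders these the other way and its proof of Theorem~\ref{T.4.10} uses only the semi-concavity of $v$ (not of the discrete approximant). Your extra ingredient is harmless but not needed --- the ``stability of subdifferential'' step goes through from the semi-concavity of $v$ and the uniform convergence $v^h\to v$ alone. Second, you correctly flag that the semi-concavity of $q$ is implicitly required (it is part of (A3) in the earlier analogues); the paper is tacitly assuming this in Section~\ref{sc5} as well.
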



\section{Numerical experiments}
\label{scnum}

\quad In this section, 
we provide numerical experiments to illustrate the convergence of PI. 
We take $f(t,x,a) = a$, $c(t,x,a) = \frac{1}{2} a^2$ and $q \equiv 0$,
so that $v_* \equiv 0$.
Figure \ref{fig} below (the semi-log plot) shows the exponential convergence of PI, 
corroborating Theorem \ref{T.4.2}.
For the vanishing viscosity approximations (Theorem \ref{T.4.3}),
we refer to \cite{QST24}  for numerical illustration.

\begin{figure}[!htb]
   \begin{minipage}{0.33\textwidth}
     \centering
       \includegraphics[width=\linewidth]{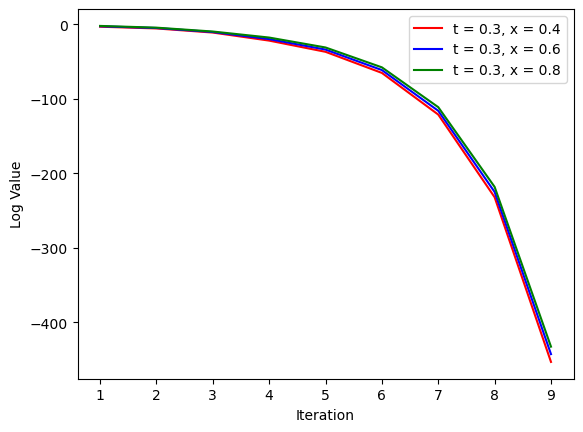}
   \end{minipage}\hfill
    \begin{minipage}{0.33\textwidth}
    \centering
     \includegraphics[width=\linewidth]{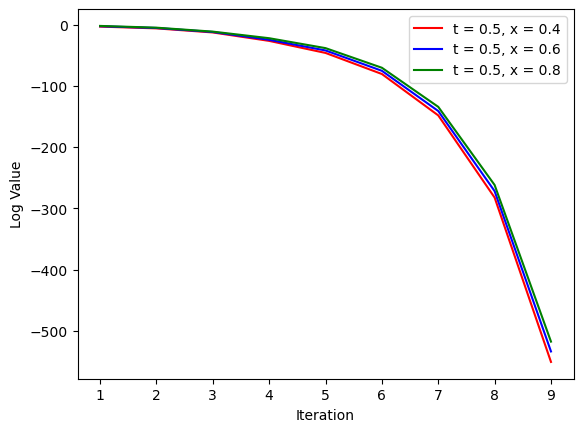}
   \end{minipage} \hfill
   \begin{minipage}{0.33\textwidth}
     \centering
       \includegraphics[width=\linewidth]{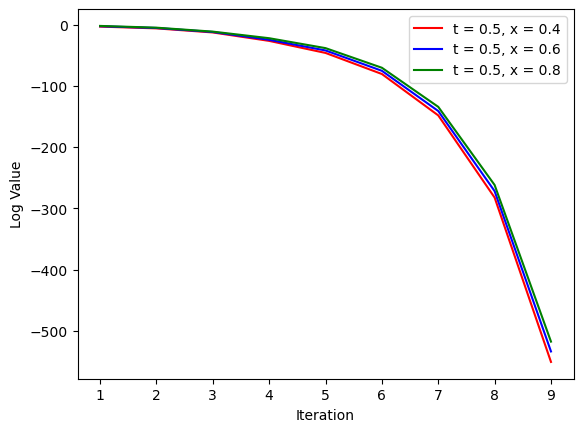}
   \end{minipage}
   \caption{Convergence of PI \eqref{4.1}--\eqref{4.1'} for $f(t,x,a) = a$, $c(t,x,a) = \frac{1}{2} a^2$ and $q \equiv 0$, with $A = [-2, 2]$, $\tau = 0.025$, $h = 0.1$ and $N = 2$.}
    \label{fig}
\end{figure}

\section{Conclusion}
\label{sc6}

\quad In this paper, we study the convergence rate of PI for optimal control problems in continuous time.
To overcome the problem of ill-posedness, 
we consider a semi-discrete scheme by adding a viscosity term using finite differences. 
We prove that PI for the semi-discrete scheme converges exponentially fast, 
and provide a bound on the discrepancy between the semi-discrete scheme and the optimal control.
We also study the discrete space-time scheme, where both space and time are discretized. 

\quad Several future directions are related to PI. 
First, we plan to relax condition (A2) by considering
\[
\alpha(t,x,p)\in \argmin_{a \in A} \left[ c(t,x,a) + p \cdot f(t,x,a)\right]
\]
to replace \eqref{eq:u}. 
One possible approach is to use the minimization property of $\alpha$ rather than its precise value in the proof. 
Note however that without the Lipschitz continuity of $\alpha$, we might not have the exponential convergence of the approximate optimal policies in Theorem \ref{T.2.add}.
Second, we have only proved a weak form of semi-concavity for $v^h$ in Theorem \ref{T.3.5}.
It remains an open question whether $v^h$ is semi-concave.

\quad Several future directions are related to the non-discretized PI. It remains unclear under which conditions on the model parameters the PI \eqref{eq:PDEiter}--\eqref{eq:uiter} is well-defined and converges exponentially fast. 
For instance, for $f(t,x,a) = a$, $c(t,x,a) = \frac{1}{2} |a|^2$ and $q \equiv 0$,
the HJB equation is 
$\partial_t v - \frac{1}{2} |\nabla v|^2 = 0$ and $v(T,x) = 0$,
which has the solution $v_* \equiv 0$.  
On the other hand, PI yields $v_n(t,x) = c_n(t) x^2$ with $c_1(t) = \frac{1}{2}$ for a suitable initialization. 
It is easy to check that $c_n(t) \le 2^{-n}$ for $n \ge 1$, and thus we get the exponential convergence of $v_n$ to $v_*$ on any compact set.
Moreover, it is also interesting to adapt PI to the differential game setting and design efficient numerical schemes (see e.g. \cite{HDDC20}). 
We refer to \cite{Kawecki-Sprekeler, Smears-Suli} for the use of PI to solve numerically fully nonlinear HJB and HJBI equations.

\bigskip
{\bf Acknowledgment}:
 We thank Elisabetta Carlini for pointing out some missing assumptions in Theorems \ref{T.2.8} and \ref{T.4.10}.
 
\bibliographystyle{abbrv}
\bibliography{unique}
\end{document}